\newcommand{\U}{\mathcal U}
\newcommand{\B}{\mathcal B}
\newcommand{\V}{\mathbb V}
\renewcommand{\P}{\mathbb P}
\newcommand{\al}{\alpha}
\newcommand{\no}{ \epsilon}
\newlength{\minipagewidth}
\newcommand{\bookbox}[1]{\small
\par\medskip\noindent
\framebox[\columnwidth]{
\begin{minipage}{\minipagewidth} {#1} \end{minipage} } \par\medskip }
\newcommand{\beq}{\begin{equation}}
\newcommand{\eeq}{\end{equation}}
\newcommand{\beqa}{\begin{eqnarray}}
\newcommand{\eeqa}{\end{eqnarray}}
\newcommand{\beqan}{\begin{eqnarray*}}
\newcommand{\eeqan}{\end{eqnarray*}}
\newcommand{\E}{\mathbb{E}}
\newcommand{\alg}{\mathcal A}
\newcommand{\hmu}{\hat{\mu}}
\newcommand{\var}{\sigma^2}
\newcommand{\hvar}{\hat{\sigma}^2}
\newcommand{\si}{\sigma}
\newcommand{\hsi}{\hat{\sigma}}
\newcommand{\N}{\mathcal N}
\newcommand{\ind}[1]{\mathbb I\left\lbrace {#1} \right\rbrace}
\newcommand{\R}{\mathbb R}
\newtheorem{assumption}{Assumption}
\title[Minimax Number of Strata for Online Stratified Sampling given Noisy Samples]{Minimax Number of Strata for Online Stratified Sampling given Noisy Samples}
\begin{document}

\maketitle

\begin{abstract}
We consider the problem of online stratified sampling for Monte Carlo integration of a function given a finite budget of $n$ noisy evaluations to the function. More precisely we focus on the problem of choosing the number of strata $K$ as a function of the budget $n$.
We provide asymptotic and finite-time results on how an oracle that has access to the function would choose the partition optimally. 
In addition we prove a \textit{lower bound} on the learning rate for the problem of stratified Monte-Carlo. As a result, we are able to state, by improving the bound on its performance, that algorithm MC-UCB, defined in~\citep{MC-UCB}, is minimax optimal both in terms of the number of samples n and the number of strata K, up to a $\sqrt{\log(nK)}$. This enables to deduce a minimax optimal bound on the difference between the performance of the estimate outputted by MC-UCB, and the performance of the estimate outputted by the best oracle static strategy, on the class of H\"older continuous functions, and upt to a $\sqrt{\log(n)}$.
\end{abstract}

\begin{keywords}
Online learning, stratified sampling, Monte Carlo integration, regret bounds.
\end{keywords}






\section{Introduction}

The objective of this paper is to provide an efficient strategy for Monte-Carlo integration of a function $f$ over a domain $[0,1]^d$. We assume that we can query  the function $n$ times. Querying the function at a time $t$ and at a point $x_t \in [0,1]^d$ provides a noisy sample 
\begin{equation}\label{eq:model}
\boldsymbol{f(x_t)+s(x_t) \no_{t}},
\end{equation}
where $\epsilon_t$ is an independent sample drawn from $\nu_{x_t}$. Here $\nu_x$ is a distribution with mean 0, variance 1 and whose shape may depend on x\footnote{It is the usual model for functions in heterocedastic noise. We isolate the standard deviation on a point $x$, $s(x)$, in the expression of the noise, since this quantity is very relevant.}. This model is actually very general (see Section~\ref{s:setting}).


Stratified sampling is a well-known strategy to reduce the variance of the estimate of the integral of $f$, when compared to the variance of the estimate provided by crude Monte-Carlo. The principle is to partition the domain in $K$ subsets called \textit{strata} and then to sample in each stratum (see \citep{rubinstein2008simulation}[Subsection 5.5] or \citep{glasserman2004monte}). 
If the variances of the strata are known, there exists an optimal static allocation strategy which allocates the samples proportionally to the measure of the stratum times their standard deviation (see Equation~\ref{BestStrat} in this paper for a reminder). We refer to this allocation as optimal oracle strategy for a given partition. In the case that the variations of $f$ and the standard deviation of the noise $s$ are unknown, it is not possible to adopt this strategy.

Consider first that the partition of the space is fixed. A way around this problem is to estimate the variations of the function and the amount of noise on the function in the strata \textit{online} (exploration) while allocating the samples according to the estimated optimal oracle proportions (exploitation). This setting is considered in~\citep{A-EtoJou10,Grover,MC-UCB}. In the long version~\citep{rapp-tech-MC-UCB} of the last paper, the authors propose the so-called MC-UCB algorithm which is based on Upper-Confidence-Bounds (UCB) on the standard deviation. They provide upper bounds for the difference between the mean-squared error\footnote{The mean squared error is measured with respect to the quantity of interest, i.e.~the integral of $f$.} of the estimate provided by MC-UCB and the mean-squared error of the estimate provided by the optimal oracle strategy (optimal oracle variance). The algorithm performs almost as well as the optimal oracle strategy.
However, the authors of~\citep{rapp-tech-MC-UCB} do not infirm nor assess the optimality of their algorithm with a lower bound as benchmark. As a matter of fact, no lower bound on the rate of convergence (to the oracle optimal strategy) for the problem of stratified Monte-Carlo exists, to the best of our knowledge. 
Still in the same paper~\citep{rapp-tech-MC-UCB}, the authors do not at all discuss on how to \textit{stratify} the space. In particular, they do not pose the problem of what an \textit{optimal oracle} partition of the space is, and do not try to answer on whether it is possible or not to attain it.

The next step is thus to efficiently design the partition. There are some interesting papers on that topic such that~\citep{glasserman1999asymptotically,kawai2010asymptotically,A-EtoForJouMou11}. The recent, state of the art, work of \cite{A-EtoForJouMou11} describes a strategy that samples \textit{asymptotically} almost as efficiently as the optimal oracle strategy, and at the same time adapts the direction and number of the strata online. This is a very difficult problem. The authors do not provide proofs of convergence of their algorithm. However for static allocation of the samples, they present some properties of the stratified estimate when the number of strata goes to infinity and provide convergence results under the optimal oracle strategy. As a corollary, they prove that the more strata there are, the smallest the optimal oracle variance. 

\paragraph{Contributions:}




The more strata there are, the smaller the variance of the estimate computed when following the optimal oracle strategy. 
However, the more strata there are, the more difficult it is to estimate the variance within each of these strata, and thus the more difficult it is to perform almost as well as the optimal oracle strategy. Choosing the number of strata is thus crucial and this is the problem we address in this paper. This defines a trade-off similar to the one in model selection (and in all its variants, e.g.~density estimation, regression...): The wider the class of models considered, i.e.~the larger the number of strata, the smaller the distance between the true model and the best model of the class, i.e.~the approximation error. But the larger the estimation error.
\\
Paper~\citep{A-EtoForJouMou11}, although proposing no finite time bounds, develops very interesting ideas for bounding the first term, i.e.~the approximation error. As pointed out in paper e.g.~\citep{MC-UCB}, it is possible to build algorithms that have a small estimation error. By constructing tight and finite-time bounds for the approximation error, it is thus possible to propose a number of strata that minimizes an upper bound on the performance. It is however not clear how consistent this choice is, i.e.~how much it can be improved. The essential ingredients for choosing efficiently a partition are thus lower bounds \textit{on the estimation error, and on the approximation error}.

The objective of this paper is to propose a method for choosing the minimax-optimal number of strata. Our contributions are the following. 
\begin{itemize}
\item We first present results on what we call the \textit{quality} $Q_{n,\N}$ of a given partition in $K$ strata $\N$ (i.e., using the previous analogy to model selection, this would represent the approximation error). 
Using very mild assumptions we compute a lower bound on the variance of the estimate given by the optimal oracle strategy on the optimal oracle partition. Then if the function and the standard deviation of the noise are $\alpha-$H\"older, and also if the strata satsfy some assumptions, we prove that $Q_{n,\N} = O(\frac{K^{\alpha/d}}{n})$. 
This bound is also minimax optimal on the class of $\alpha-$H\"older functions.

\item We then present results on the estimation error for the estimate outputted by algorithm MC-UCB of~\citep{MC-UCB} (pseudo-regret in the terminology of~\citep{MC-UCB}). 
In this paper, we improve the analysis of the MC-UCB algorithm when compared to paper~\citep{MC-UCB} in terms of the dependence on $K$. The problem independent bound on the pseudo-regret in~\citep{MC-UCB} is of order\footnote{Here $\tilde O$ is a $O$ up to a polynomial $\log(n)$.} $\tilde O(Kn^{-4/3})$, and we tighten this bound in this paper so that it is of order $\tilde O( K^{1/3} n^{-4/3})$.

\item We provide the first \textit{lower bound} (on the pseudo-regret) for the problem of online Stratified Sampling. The bound $\Omega(K^{1/3} n^{-4/3})$ is tight and \textit{matches the upper-bound of MC-UCB both in terms of the number of strata and the number of samples}. This is the main contribution of the paper, and we believe that the proof technique for this bound is original.

\item Finally, we combine the results on the quality and on the pseudo-regret of MC-UCB to provide a value on the number of strata leading to a minimax-optimal trade-off (up to a $\sqrt{\log(n)}$) on the class of $\alpha-$H\"older functions. 
\end{itemize}

The rest of the paper is organized as follows. In Section~\ref{s:setting} we formalize the problem and introduce the notations used throughout the paper. Section~\ref{s:conver} states the results on the quality of a partition. Section~\ref{s:algo} improves the analysis of the MC-UCB algorithm, and establishes the lower bound on the pseudo-regret.
Section~\ref{s:tradeoff} reports the best trade-off to choose the number of strata. And in Section~\ref{s:experiments}, we illustrate how important it is to choose carefully the number of strata. We finally conclude the paper and suggest future works.

\section{Setting}\label{s:setting}

We consider the problem of numerical integration of a function $f: [0,1]^d \rightarrow \R$ with respect to the uniform (Lebesgue) measure. We dispose of a budget of $n$ queries (samples) to the function, and we can allocate this budget \textit{sequentially}. When querying the function at a time $t$ and at a point $x_t$, we receive a noisy sample $X(t)$ of the form described in Equation~\ref{eq:model}.

We now assume that the space is stratified in $K$ Lebesgue measurable strata that form a partition $\N$. We index these strata, called $\Omega_{k}$, with indexes $k\in\{1,\ldots,K\}$, and write $w_k$ their measure, according to the Lebesgue measure. We write
 $\mu_{k} = \frac{1}{w_{k}} \int_{\Omega_{k}} \E_{\no \sim \nu_x} [f(x) + s(x)\no] dx = \frac{1}{w_{k}} \int_{\Omega_{k}} f(x) dx$ their mean and $\si_{k}^2 = \frac{1}{w_{k}} \int_{\Omega_{k}} \E_{\no \sim \nu_x} [(f(x) + s(x)\no - \mu_{k})^2] dx$ their variance. These mean and variance correspond to the mean and variance of the random variable $X(t)$ when the coordinate $x$ at which the noisy evaluation of $f$ is observed is chosen uniformly at random on the stratum $\Omega_k$.

We denote by $\alg$ an algorithm that allocates online the budget by selecting at each time step $1\leq t\leq n$ the index $k_t\in\{1,\dots ,K\}$ of a stratum and then sampling uniformly the corresponding stratum $\Omega_{k_t}$. The objective is to return the best possible estimate $\hat \mu_{n}$ of the integral of the function $f$. We write $T_{k,n} = \sum_{t \leq n} \ind{k_t = k}$ the number of samples in stratum $\Omega_k$ up to time $n$. We denote by $\big(X_{k,t}\big)_{1\leq k \leq K, 1\leq t \leq T_{k,n}}$ the samples in stratum $\Omega_k$, and we define $\hmu_{k,n} = \frac{1}{T_{k,n}} \sum_{t=1}^{T_{k,n}}X_{k,t}$ the empirical means. We estimate the integral of $f$ by $\hmu_n = \sum_{k=1}^K w_k \hmu_{k,n}$.

If we allocate a deterministic number of samples $T_{k}$ to each stratum $\Omega_{k}$ and if the samples are independent and chosen uniformly on each stratum $\Omega_{k}$, we have
\begin{equation*}
 \E(\hmu_{n}) = \sum_{k \leq K} w_k \mu_{k} = \sum_{k \leq K} \int_{\Omega_{k}} f(u) du = \int_{[0,1]^d} f(u) du = \mu,
\end{equation*}
and also
\begin{equation*}
 \V(\hmu_n) = \sum_{k \leq K} \frac{w_k^2 \var_{k}}{T_{k}},
\end{equation*}
where the expectation and the variance are computed according to all the samples that the algorithm collected.

For a given algorithm $\alg$ allocating $T_{k,n}$ samples drawn \textit{uniformly} within stratum $\Omega_k$, we denote by \emph{pseudo-risk} the quantity 
\begin{equation}\label{risk}
 L_{n,\N}(\alg) = \sum_{k \leq K} \frac{w_k^2 \var_{k}}{T_{k,n}}.
\end{equation}

Note that if an algorithm $\alg^*$ has access the variances $\var_{k}$ of the strata, it can choose to allocate the budget in order to minimize the pseudo-risk, i.e., sample each stratum $T_{k}^* = \frac{w_k \si_{k}}{\sum_{i \leq K} w_i \si_{i}}n$ times (this is the so-called oracle allocation). These optimal numbers of samples can be non-integer values, in which case the proposed optimal allocation is not realizable. But we still use it as a benchmark. The pseudo-risk for this algorithm (which is also the variance of the estimate here since the sampling strategy is deterministic) is then
\begin{equation}\label{BestStrat}
 L_{n,\N}(\alg^*) = \frac{\Big(\sum_{k \leq K} w_k \si_{k}\Big)^2}{n} = \frac{\Sigma_{\N}^2}{n},
\end{equation}
where $\Sigma_{\N} = \sum_{k \leq K} w_k \si_{k}$. We also refer in the sequel as optimal proportion to $\lambda_k = \frac{w_k \si_{k}}{\sum_{i \leq K} w_i \si_{i}}$, and to optimal oracle strategy to this allocation strategy. Although, as already mentioned, the optimal allocations (and thus the optimal pseudo-risk) might not be realizable, it is still very useful in providing a lower-bound. No static (even oracle) algorithm has a pseudo-regret lower than $L_{n,\N}(\alg^*)$ on partition $\N$.

It is straightforward to see that the more refined the partition $\N$ the smaller $L_{n,\N}(\alg^*)$.
We thus define the \textit{quality of a partition} $Q_{n,\N}$ as the difference between the variance $L_{n,\N}(\alg^*)$ of the estimate provided by the optimal oracle strategy on partition $\N$, and the infimum of the variance of the optimal oracle strategy on {\em any} partition (optimal oracle partition) (with an arbitrary number of strata):
\begin{equation}\label{eq:quality}
 Q_{n,\N} = L_{n,\N}(\alg^*) - \inf_{\N' measurable} L_{n,\N'}(\alg^*).
\end{equation}

We also define the \textit{pseudo-regret} of an algorithm $\alg$ on a given partition $\N$, the difference between its pseudo-risk and the variance of the optimal oracle strategy:
\begin{equation}\label{eq:regret}
 R_{n,\N}(\alg) = L_{n, \N}(\alg) - L_{n,\N}(\alg^*).
\end{equation}

We will assess the performance of an algorithm $\alg$ by comparing its pseudo risk to the minimum possible variance of an optimal oracle strategy on the optimal oracle partition:
\begin{equation}\label{eq:regret2}
  L_{n,\N}(\alg) - \inf_{\N' measurable} L_{n,\N'}(\alg^*) = R_{n,\N}(\alg) + Q_{n,\N}.
\end{equation}

Using the analogy of model selection mentioned in the Introduction, the quality $Q_{n,\N}$ is similar to the approximation error and the pseudo-risk $R_{n,\N}(\alg)$ to the estimation error. 


\paragraph{Motivation for the model $f(x) + s(x)\no_{t}$.} Assume that a learner can, at each time $t$, choose a point $x$ and collect an observation $F(x,W_t)$, where $W_t$ is an independent noise, that can however depend on $x$. It is the general model for representing evaluations of a noisy function. There are many settings where one needs to integrate accurately a noisy function without wasting too much budget, like for instance pollution survey. Set $f(x) = \E_{W_t}[F(x,W_t)]$, and $s(x) \no_t = F(x,W_t) - f(x)$. Since by definition $\no_t$ is of mean $0$ and variance $1$, we have in fact $s(x) =  \sqrt{\E_{\nu_x}[(F(x,W_t) - f(x))^2]}$ and $\no_t = \frac{F(x,W_t) - f(x)}{s(x)}$. Observing $F(x,W_t)$ is equivalent to observing $f(x) + s(x)\no_{t}$, and this implies that the model that we choose is also very general.
\\
There is also a important setting where this model is relevant, and this is for the integration of a function $F$ in high dimension $d^*$. Stratifying in dimension $d^*$ seems hopeless, since the budget $n$ has to be exponential with $d^*$ if one wants to stratify in every direction of the domain: this is the curse of dimensionality. It is necessary to reduce the dimension by choosing \textit{a small amount} of directions $(1,\ldots, d)$ that are particularly relevant, and control/stratify only in these $d$ directions\footnote{This is actually a very common technique for computing the price of options, see~\citep{glasserman2004monte}.}. Then the control/stratification is only on the first $d$ coordinates, so when sampling at at a time $t$, one chooses $x = (x_1, \ldots, x_d)$, and the other $d^* - d$ coordinates $U(t) = (U_{d+1}(t),\ldots, U_{d^*}(t))$ are uniform random variables on $[0,1]^{d^*-d}$ (without any control). When sampling in $x$ at a time $t$, we observe $F(x,U(t))$.
By writing $f(x) = \E_{U(t) \sim \mathcal U([0,1]^{d^*-d})}[F(x,U(t))]$, and $s(x) \no_t = F(x,U(t)) - f(x)$, we obtain that the model we propose is also valid in this case.

\section{The quality of a partition: Analysis of the term $Q_{n,\N}$.}\label{s:conver}

In this Section, we focus on the \textit{quality} of a partition defined in Section~\ref{s:setting}.



\paragraph{Convergence under very mild assumptions}

As mentioned out in Section~\ref{s:setting}, the more refined the partition $\N$ of the space, the smaller $L_{n,\N}(\alg^*)$, and thus $\Sigma_{\N}$. Through this monotony property, we know that $\inf_{\N} \Sigma_{\N}$ is also the limit of the $(\Sigma_{\N_p})_p$ of a sequence of partitions $(\N_p)_p$ such that the diameter of each stratum goes to $0$. We state in the following Proposition that for \textit{any} such sequence, $\lim_{p \rightarrow +\infty} \Sigma_{\N_p} = \int_{[0,1]^d} s(x) dx$. Consequently $\inf_{\N} \Sigma_{\N} = \int_{[0,1]^d} s(x) dx$.

\begin{proposition}\label{lem:absconv}
Let $(\N_p)_p = (\Omega_{k,p})_{k \in \{1,\ldots, K_p\}, p\in \{1,\ldots,+\infty\}}$ be a sequence of measurable partitions (where $K_p$ is the number of strata of partition $\N_p$) such that
\begin{itemize}
\item \emph{AS1:} $0 < w_{k,p} \leq \upsilon_p$, for some sequence $(\upsilon_p)_p$, where $\upsilon_p \rightarrow 0$ for $p \rightarrow + \infty$.
\item \emph{AS2:} The diameters according to the $||.||_2$ norm on $\R^d$ of the strata are such that $\max_k Diam(\Omega_{k,p}) \leq D(w_{k,p})$, for some real valued function $D(\cdot)$, such that $D(w) \rightarrow 0$ for $w \rightarrow 0$.
\end{itemize}
If the functions $m$ and $s$ are in $\mathbb{L}_2([0,1]^d)$, then
\begin{equation*}
\lim_{p \rightarrow +\infty}\Sigma_{\N_p}= \inf_{\N measurable} \Sigma_{\N} = \int_{[0,1]^d} s(x)dx,
\end{equation*}
which implies that $n\times Q_{n,\N_p} \rightarrow 0$ for $p \rightarrow +\infty$.
\end{proposition}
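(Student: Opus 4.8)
The plan is to prove the limit $\Sigma_{\N_p} \to \int_{[0,1]^d} s(x)\,dx$ directly, and then deduce the infimum statement and the consequence on $Q_{n,\N_p}$ for free from the monotonicity argument already sketched before the statement. Recall $\Sigma_{\N_p} = \sum_k w_{k,p}\si_{k,p}$, where $\si_{k,p}^2 = \frac{1}{w_{k,p}}\int_{\Omega_{k,p}}\E_{\no\sim\nu_x}[(f(x)+s(x)\no-\mu_{k,p})^2]\,dx$. First I would decompose the inner expectation: since $\no$ has mean $0$ and variance $1$, $\E_{\no\sim\nu_x}[(f(x)+s(x)\no-\mu_{k,p})^2] = (f(x)-\mu_{k,p})^2 + s(x)^2$. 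Hence
\begin{equation*}
w_{k,p}\si_{k,p}^2 = \int_{\Omega_{k,p}} s(x)^2\,dx + \int_{\Omega_{k,p}} (f(x)-\mu_{k,p})^2\,dx,
\end{equation*}
where the second term is the local variance of $f$ on the stratum, call it $V_{k,p}(f)$. So $\Sigma_{\N_p} = \sum_k \sqrt{\int_{\Omega_{k,p}} s^2 + V_{k,p}(f)}$. The heuristic is that as the strata shrink, $V_{k,p}(f)$ becomes negligible compared to $\int_{\Omega_{k,p}} s^2$ (which is of order $w_{k,p}$), and $\sum_k \sqrt{\int_{\Omega_{k,p}} s^2} \to \sum_k \frac{1}{\sqrt{w_{k,p}}}\int_{\Omega_{k,p}} s \approx \int s$ by Jensen/Cauchy–Schwarz becoming tight.

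The key steps, in order: (i) Establish the \emph{lower bound} $\Sigma_{\N_p} \ge \sum_k \sqrt{\int_{\Omega_{k,p}} s^2} \ge \sum_k \frac{1}{w_{k,p}}\left(\int_{\Omega_{k,p}} s\right)^2 \cdot w_{k,p}^{-1}$… more precisely, by Cauchy–Schwarz $\int_{\Omega_{k,p}} s \le \sqrt{w_{k,p}}\sqrt{\int_{\Omega_{k,p}} s^2}$, so $\sqrt{\int_{\Omega_{k,p}} s^2} \ge \frac{1}{\sqrt{w_{k,p}}}\int_{\Omega_{k,p}} s$, giving $\Sigma_{\N_p} \ge \sum_k \frac{1}{\sqrt{w_{k,p}}}\int_{\Omega_{k,p}} s \ge \int_{[0,1]^d} s$ only if the weights are… actually this needs care — I would instead directly invoke that $\Sigma_\N$ is nonincreasing under refinement and that any partition gives $\Sigma_\N \ge \int s$ by applying the subadditivity of $t\mapsto\sqrt{t}$ together with $w_k\si_k \ge \int_{\Omega_k} s$, which follows from Jensen applied to the concave square-root after writing $w_k\si_k^2 \ge \int_{\Omega_k}s^2$ and then $w_k\si_k = \sqrt{w_k}\sqrt{w_k\si_k^2}\ge\sqrt{w_k}\sqrt{\int_{\Omega_k}s^2}\ge\int_{\Omega_k}s$. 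Summing gives $\Sigma_{\N_p}\ge\int s$ for every $p$. (ii) Establish the matching \emph{upper bound in the limit}: bound $\sqrt{\int_{\Omega_{k,p}}s^2 + V_{k,p}(f)} \le \sqrt{\int_{\Omega_{k,p}}s^2} + \sqrt{V_{k,p}(f)}$ by subadditivity of $\sqrt{\cdot}$, so $\Sigma_{\N_p} \le \sum_k\sqrt{\int_{\Omega_{k,p}}s^2} + \sum_k\sqrt{V_{k,p}(f)}$. (iii) Show $\sum_k\sqrt{\int_{\Omega_{k,p}}s^2}\to\int s$: this is where AS1, AS2 and $s\in\mathbb{L}_2$ enter — approximate $s$ in $\mathbb{L}_2$ by a continuous (or step) function $\tilde s$, control the error terms using Cauchy–Schwarz and $\sum_k w_{k,p}=1$, and for continuous $\tilde s$ use uniform continuity on the compact $[0,1]^d$ together with AS2 (diameters $\to 0$) to get $\frac{1}{w_{k,p}}\int_{\Omega_{k,p}}\tilde s^2 - \left(\frac{1}{w_{k,p}}\int_{\Omega_{k,p}}\tilde s\right)^2 \to 0$ uniformly, whence $\sum_k\sqrt{\int_{\Omega_{k,p}}\tilde s^2} = \sum_k\sqrt{w_{k,p}}\sqrt{\frac{1}{w_{k,p}}\int_{\Omega_{k,p}}\tilde s^2} \to \sum_k\int_{\Omega_{k,p}}\tilde s = \int\tilde s$. (iv) Show $\sum_k\sqrt{V_{k,p}(f)}\to 0$: again approximate $f$ by continuous $\tilde f$ in $\mathbb{L}_2$; for the continuous part, $V_{k,p}(\tilde f)\le \omega_{\tilde f}(D(w_{k,p}))^2 w_{k,p}$ where $\omega_{\tilde f}$ is the modulus of continuity, so $\sum_k\sqrt{V_{k,p}(\tilde f)}\le \sum_k \omega_{\tilde f}(D(w_{k,p}))\sqrt{w_{k,p}} \le \max_k\omega_{\tilde f}(D(w_{k,p}))\cdot\sum_k\sqrt{w_{k,p}}$, and $\sum_k\sqrt{w_{k,p}} \le \sqrt{K_p}\sqrt{\sum_k w_{k,p}} = \sqrt{K_p}$ — this needs a bound on $K_p$; better, use $\sum_k\sqrt{w_{k,p}}= \sum_k w_{k,p}/\sqrt{w_{k,p}} \ge$ … hmm, I would instead write $\sum_k\sqrt{w_{k,p}}\cdot\sqrt{w_{k,p}}=1$ and use $\sqrt{w_{k,p}}\le\sqrt{\upsilon_p}$ from AS1 to get $\sum_k\sqrt{w_{k,p}} = \sum_k w_{k,p}\cdot\frac{1}{\sqrt{w_{k,p}}}$, which does not obviously shrink. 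The cleaner route: $\sum_k\sqrt{V_{k,p}(\tilde f)} \le \sqrt{K_p}\sqrt{\sum_k V_{k,p}(\tilde f)}$ is wrong-signed; instead bound $V_{k,p}(\tilde f)\le \|\tilde f'\|_\infty^2 D(w_{k,p})^2 w_{k,p}$, take the $\sqrt{\cdot}$, factor $\max_k D(w_{k,p})$ out (which $\to 0$ by AS2 since $\max_k w_{k,p}\le\upsilon_p\to0$), and bound the residual $\sum_k\sqrt{w_{k,p}}$ by observing $\sqrt{w_{k,p}}\le \sqrt{\upsilon_p}\cdot\frac{w_{k,p}}{w_{k,p}}$... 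The genuinely clean bound is $\sum_k\sqrt{w_{k,p}} \le \frac{1}{\sqrt{\min_k w_{k,p}}}\sum_k w_{k,p}$, which is useless without a lower bound on weights; so I would instead replace the crude modulus estimate by the $\mathbb{L}_2$ estimate $\sum_k V_{k,p}(f) \le$ (total $\mathbb{L}_2$ oscillation of $f$), combined with Cauchy–Schwarz $\sum_k\sqrt{V_{k,p}(f)} \le \sqrt{K_p\sum_k V_{k,p}(f)}$ — still needs $K_p$. Given these difficulties I would ultimately handle (iv) via: by $\mathbb{L}_2$-density pick $\tilde f$ continuous with $\|f-\tilde f\|_2\le\eta$; then $\sqrt{V_{k,p}(f)}\le\sqrt{V_{k,p}(\tilde f)}+\sqrt{V_{k,p}(f-\tilde f)}$, $\sum_k\sqrt{V_{k,p}(f-\tilde f)}\le\sqrt{K_p}\sqrt{\sum_k V_{k,p}(f-\tilde f)}\le\sqrt{K_p}\,\eta$ — and here is the real gap, so the argument as I have it needs AS1 to be strengthened or $K_p$ controlled; I suspect the authors in fact use a dominated-convergence / Lebesgue-differentiation argument on $\frac{1}{w_{k,p}}\int_{\Omega_{k,p}} f$ rather than moduli of continuity.

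\textbf{Main obstacle.} The delicate point is step (iv): showing that the aggregated within-stratum $\mathbb{L}_2$-oscillations $\sum_k\sqrt{V_{k,p}(f)}$ vanish for $f$ merely in $\mathbb{L}_2$ (not continuous), uniformly over all admissible sequences of partitions satisfying only AS1–AS2. The clean way is a two-step approximation: reduce to continuous $f$ by density, handle the continuous case by uniform continuity plus AS2, and absorb the $\mathbb{L}_2$-error uniformly using AS1 ($\max_k w_{k,p}\to 0$) to keep the error terms under control — e.g.\ via $\sum_k\sqrt{V_{k,p}(g)}\le\bigl(\sum_k \mathbf{1}\bigr)^{1/2}\bigl(\sum_k V_{k,p}(g)\bigr)^{1/2}$ is too lossy, so one wants instead $\sum_k\sqrt{V_{k,p}(g)} = \sum_k\sqrt{w_{k,p}}\cdot\sqrt{V_{k,p}(g)/w_{k,p}}\le\sqrt{\upsilon_p}^{1/2}\cdot(\ldots)$, exploiting that each $\sqrt{V_{k,p}(g)/w_{k,p}}$ is a local standard deviation. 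I expect the cleanest rigorous route is to establish pointwise (a.e.) convergence of the piecewise-constant approximations $f_p := \sum_k\mu_{k,p}\mathbf{1}_{\Omega_{k,p}}$ and $s_p^2 := \sum_k\bigl(\frac{1}{w_{k,p}}\int_{\Omega_{k,p}}s^2\bigr)\mathbf{1}_{\Omega_{k,p}}$ to $f$ and $s^2$ in $\mathbb{L}_2$ and $\mathbb{L}_1$ respectively (Lebesgue differentiation / martingale convergence under AS1–AS2), from which $\sum_k\sqrt{\int_{\Omega_{k,p}}s^2 + V_{k,p}(f)} = \sum_k\sqrt{w_{k,p}}\,\|(s_p^2 + (f - f_p)^2)^{1/2}\|$-type expressions converge to $\int s$ by a Scheffé-style argument. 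The lower bound $\Sigma_{\N_p}\ge\int s$ (step (i)) is elementary. The final two clauses — $\inf_\N\Sigma_\N=\int s$ and $n\,Q_{n,\N_p}\to 0$ — then follow immediately: the limit equals the lower bound hence equals the infimum, and $n\,Q_{n,\N_p} = \Sigma_{\N_p}^2 - (\inf_{\N'}\Sigma_{\N'})^2 \to 0$ by squaring.
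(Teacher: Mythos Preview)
Your overall strategy --- decompose $\si_k^2$ into the noise part and the within-stratum variance of $f$, establish the lower bound $\Sigma_{\N_p}\ge\int s$ by Cauchy--Schwarz, and get the matching upper bound by approximating $f,s$ with uniformly continuous functions --- is essentially the paper's approach. But there is a concrete algebraic slip that is the sole cause of your ``main obstacle'', and once you fix it, step (iv) is immediate.

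You correctly obtain $w_{k,p}\si_{k,p}^2 = \int_{\Omega_{k,p}} s^2 + V_{k,p}(f)$, but then write $\Sigma_{\N_p} = \sum_k \sqrt{\int_{\Omega_{k,p}} s^2 + V_{k,p}(f)}$. This drops a factor of $\sqrt{w_{k,p}}$: since $w_{k,p}\si_{k,p} = \sqrt{w_{k,p}}\cdot\sqrt{w_{k,p}\si_{k,p}^2}$, the correct identity is
\[
\Sigma_{\N_p} \;=\; \sum_k \sqrt{w_{k,p}}\,\sqrt{\int_{\Omega_{k,p}} s^2 + V_{k,p}(f)}\;.
\]
With this factor restored, subadditivity gives $\Sigma_{\N_p} \le \sum_k \sqrt{w_{k,p}}\sqrt{\int_{\Omega_{k,p}} s^2} + \sum_k \sqrt{w_{k,p}}\sqrt{V_{k,p}(f)}$, and for uniformly continuous $f$ one has $V_{k,p}(f)\le w_{k,p}\,\omega_f(D(\upsilon_p))^2$, so the second sum is at most $\omega_f(D(\upsilon_p))\sum_k w_{k,p} = \omega_f(D(\upsilon_p))\to 0$. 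No control on $K_p$ or on $\sum_k\sqrt{w_{k,p}}$ is needed; all your attempted workarounds (Lebesgue differentiation, martingale convergence, bounding $\sqrt{K_p}$) are unnecessary. The same weighting fixes step (iii): $\sum_k\sqrt{w_{k,p}}\sqrt{\int_{\Omega_{k,p}} s^2} = \sum_k w_{k,p}\sqrt{\tfrac{1}{w_{k,p}}\int_{\Omega_{k,p}} s^2}$, and for uniformly continuous $s$ the inner square root is within $\omega_s(D(\upsilon_p))$ of $\tfrac{1}{w_{k,p}}\int_{\Omega_{k,p}} s$, so the sum is $\int s + O(\omega_s(D(\upsilon_p)))$.

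The paper organises this slightly more compactly: rather than splitting via $\sqrt{a+b}\le\sqrt a+\sqrt b$, it bounds $\si_{k,p}^2 - \bigl(\tfrac{1}{w_{k,p}}\int_{\Omega_{k,p}} s\bigr)^2 \le 2\upsilon^2$ directly for uniformly continuous $f,s$, deduces $\si_{k,p} - \tfrac{1}{w_{k,p}}\int_{\Omega_{k,p}} s \le \sqrt{2}\,\upsilon$, and then multiplies by $w_{k,p}$ and sums. The density step from uniformly continuous to $\mathbb{L}_2$ is done in the paper by choosing, for each fixed partition, approximants $f_\upsilon,s_\upsilon$ with $\|f-f_\upsilon\|_2^2,\|s-s_\upsilon\|_2^2 \le \tfrac{\upsilon}{2}\min_k w_{k,p}$ (possible since $w_{k,p}>0$ by AS1), which makes the per-stratum averages close; your suggested Lebesgue-differentiation route would also work but is heavier machinery than required.
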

\begin{proof}[Sketch of Proof. The full proof is in the Supplementary material (Appendix~\ref{s:cvdimd})]
The form of the model and the definition of $\si_k$ imply that
\begin{align}\label{eq:exprsigma}
\si_k^2 &= \frac{1}{w_k} \int_{\Omega_k} \big(f(x) - \frac{1}{w_k} \int_{\Omega_k} f(u)du \big)^2 dx + \frac{1}{w_k} \int_{\Omega_k} s(x)^2dx.
\end{align}
We first prove that the result hold for uniformly continuous functions, and then generalize to $\mathbb{L}_2$ functions based on a density argument.

{\em Step 1: Convergence when $m$ and $s$ are uniformly continuous:}
Assume that $m$ and $s$ are uniformly continuous with respect to the $||.||_2$ norm. For any $\upsilon >0$, there exists $\eta \hspace{0.2cm} s.t. \hspace{0.2cm} \forall x, |s(x+u) - s(x)|\leq \upsilon$ and $|f(x+u) -f(x)|\leq \upsilon$ where $u \in \B_{2,d}(\eta)$.
We choose $K$ large enough so that the size of the strata is smaller than $\upsilon$, and their diameter is smaller than $\eta$ (it is possible to do so since the diameter of the strata shrinks to $0$ as $K\rightarrow\infty$).
From Equation~\ref{eq:exprsigma} we deduce that
\begin{align*}
\si_{k}^2 - (\frac{1}{w_{k}} \int_{\Omega_{k}} s)^2 &\leq  2\upsilon^2,
\end{align*}
and using the concavity of the square-root function, we have $\sum_k w_{k}\si_{k} - \int_{[0,1]^d} s  \leq \sqrt{2} \upsilon$, which concludes the proof for uniformly continuous functions.

{\em Step 2: Generalization to the case where $f$ and $s$ are in $\mathbb{L}_2([0,1)^d)$:}
From the density property of the uniformly continuous functions in $\mathbb{L}_{2}([0,1]^d)$ (with respect to the $||.||_2$ norm), we deduce that for any $K$ and $\upsilon$, there exists two uniformly continuous function $f_{\upsilon}$ and $s_{\upsilon}$ such that:
\begin{equation*}
\Bigg|\sum_{k=1}^{K} w_{k}\si_{k} - \sum_{k=1}^{K} \sqrt{w_{k}}\sqrt{ \int_{\Omega_{k}} \Big(f_{\upsilon}(x) + \int_{\Omega_{k}} f_{\upsilon}(u)du\Big)^2dx   - \frac{1}{w_{k}} \int_{\Omega_{k}} s_{\upsilon}^2(x) dx}\Bigg| \leq \upsilon,
\end{equation*}
and also that $\int_{\Omega} |s(x)-s_{\upsilon}(x)| dx   \leq \sqrt{\frac{\upsilon}{2}}.$ One concludes by combining those two inequalities with Step 1.
\end{proof}

In Proposition~\ref{lem:absconv}, even though the optimal oracle allocation might not be realizable (in particular if the number of strata is larger than the budget), we can still compute the quality of a partition, as defined in~\ref{eq:quality}. It does not correspond to any reachable pseudo-risk, but rather to a lower bound on any (even oracle) static allocation.

When $f$ and $s$ are in $\mathbb L_2([0,1]^d)$, for any appropriate sequence of partitions $(\N_p)_p$, $\Sigma_{\N_p}$ (which is the principal ingredient of the variance of the optimal oracle allocation) converges to the smallest possible $\Sigma_{\N}$ for given $f$ and $s$. Note however that this condition is not sufficient to obtain a \textit{rate}.

\paragraph{Finite-Time analysis under H\"older assumption:}

We make the following assumption on the functions $f$ and $s$.

\begin{assumption}\label{ass:hold}
 The functions $f$ and $s$ are $(M, \alpha)-$H\"older continuous, i.e., for $g\in\{m,s\},$ for any $x$ and $y\in [0,1]^d, |g(x) - g(y)| \leq M ||x-y||_2^{\alpha}$.
\end{assumption}

The H\"older assumption enables to consider arbitrarily non-smooth functions (for small $\alpha$, the function can vary arbitrarily fast), and is thus a fairly general assumption.

We also consider the following partitions in $K$ squared strata.
\begin{definition}\label{ass:stratas}
We write $\N_K$ the  partition of $[0,1]^d$ in $K$ hyper-cubic strata of measure $w_k = w = \frac{1}{K}$ and side length $(\frac{1}{K})^{1/d}$: we assume for simplicity that there exists an integer $l$ such that $K = l^d$. 
\end{definition}

The following Proposition holds.

\begin{proposition}\label{prop:noisy}
Under Assumption~\ref{ass:hold} we have for any partition $\N_K$ as defined in Definition~\ref{ass:stratas} that
\begin{equation}\label{eq:prop3.1}
\Sigma_{\N_K} - \int_{[0,1]^d} s(x)dx \leq \sqrt{2 d} M (\frac{1}{K})^{\alpha/d},
\end{equation}
which implies
\begin{equation*}
Q_{n, \N_K} \leq \frac{2\sqrt{2d}M\Sigma_{\N_1}}{n}(\frac{1}{K})^{\alpha/d},
\end{equation*}
where $\N_1$ stands for the ``partition'' with one stratum.
\end{proposition}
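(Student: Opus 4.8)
The plan is to prove the two displayed inequalities in turn, the first being the substance and the second a routine consequence. I would start from the decomposition of $\si_k^2$ given in Equation~\ref{eq:exprsigma} of the previous proof sketch, namely
\[
\si_k^2 = \frac{1}{w_k}\int_{\Omega_k}\Big(f(x) - \tfrac{1}{w_k}\int_{\Omega_k} f(u)\,du\Big)^2 dx + \frac{1}{w_k}\int_{\Omega_k} s(x)^2\,dx .
\]
For a stratum $\Omega_k$ that is a hyper-cube of side length $(1/K)^{1/d}$, its $\|\cdot\|_2$-diameter is $\sqrt{d}\,(1/K)^{1/d}$. By the H\"older assumption on $f$, for every $x\in\Omega_k$ the deviation $|f(x) - \tfrac{1}{w_k}\int_{\Omega_k} f(u)\,du|$ is at most $M(\sqrt{d})^{\alpha}(1/K)^{\alpha/d}$, so the first (variance-of-$f$) term is bounded by $M^2 d^{\alpha}(1/K)^{2\alpha/d}$. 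Hence
\[
\si_k \le \sqrt{\Big(\tfrac{1}{w_k}\textstyle\int_{\Omega_k} s^2\Big) + M^2 d^{\alpha}(1/K)^{2\alpha/d}} \le \sqrt{\tfrac{1}{w_k}\textstyle\int_{\Omega_k} s^2} + M d^{\alpha/2}(1/K)^{\alpha/d},
\]
using $\sqrt{a+b}\le\sqrt a+\sqrt b$. Multiplying by $w_k$, summing over $k$, and using $\sum_k w_k = 1$ gives $\Sigma_{\N_K} \le \sum_k w_k\sqrt{\tfrac{1}{w_k}\int_{\Omega_k} s^2} + M d^{\alpha/2}(1/K)^{\alpha/d}$. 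It then remains to show $\sum_k w_k\sqrt{\tfrac{1}{w_k}\int_{\Omega_k} s^2} \le \int_{[0,1]^d} s(x)\,dx + $ something of the same order; here I would again use the H\"older bound on $s$: on $\Omega_k$ we have $s(x)^2 \le (\tfrac{1}{w_k}\int_{\Omega_k} s)^2 + (\text{cross and square terms bounded by } M(\sqrt d)^\alpha (1/K)^{\alpha/d})$, more carefully $|s(x) - \tfrac{1}{w_k}\int_{\Omega_k}s| \le M d^{\alpha/2}(1/K)^{\alpha/d}$, so $\tfrac{1}{w_k}\int_{\Omega_k}s^2 \le \big(\tfrac{1}{w_k}\int_{\Omega_k}s\big)^2 + 2 M d^{\alpha/2}(1/K)^{\alpha/d}\cdot(\tfrac{1}{w_k}\int_{\Omega_k}s) + M^2 d^{\alpha}(1/K)^{2\alpha/d}$ — actually cleaner: $\tfrac{1}{w_k}\int_{\Omega_k}s^2 - \big(\tfrac{1}{w_k}\int_{\Omega_k}s\big)^2 = \tfrac{1}{w_k}\int_{\Omega_k}(s - \tfrac{1}{w_k}\int s)^2 \le M^2 d^{\alpha}(1/K)^{2\alpha/d}$, so $\sqrt{\tfrac{1}{w_k}\int_{\Omega_k}s^2} \le \tfrac{1}{w_k}\int_{\Omega_k}s + M d^{\alpha/2}(1/K)^{\alpha/d}$. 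Summing with weights $w_k$ yields $\sum_k w_k\sqrt{\tfrac{1}{w_k}\int_{\Omega_k}s^2} \le \int_{[0,1]^d} s + M d^{\alpha/2}(1/K)^{\alpha/d}$. Combining the two bounds gives $\Sigma_{\N_K} - \int s \le 2 M d^{\alpha/2}(1/K)^{\alpha/d}$; matching the stated constant $\sqrt{2d}\,M$ is just bookkeeping (one can absorb $d^{\alpha/2} \le d^{1/2}$ for $\alpha\le 1$, and the factor of $2$ versus $\sqrt 2$ comes from handling the $f$-term and $s$-term jointly rather than separately, e.g.\ using $\sqrt{a+b+c}$-type inequalities more carefully so that only one $\sqrt 2$ appears).

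Next I would derive the bound on $Q_{n,\N_K}$. By Proposition~\ref{lem:absconv}, $\inf_{\N'\text{ measurable}} L_{n,\N'}(\alg^*) = \frac{1}{n}\big(\int_{[0,1]^d} s\big)^2$. So
\[
Q_{n,\N_K} = L_{n,\N_K}(\alg^*) - \inf_{\N'} L_{n,\N'}(\alg^*) = \frac{\Sigma_{\N_K}^2 - (\int s)^2}{n} = \frac{(\Sigma_{\N_K} - \int s)(\Sigma_{\N_K} + \int s)}{n}.
\]
The first factor is bounded by $\sqrt{2d}\,M(1/K)^{\alpha/d}$ by Equation~\ref{eq:prop3.1}; for the second factor, since the partition $\N_K$ is a refinement (in the monotonicity sense) of $\N_1$, we have $\Sigma_{\N_K} \le \Sigma_{\N_1}$ and $\int s \le \Sigma_{\N_1}$ (the latter because $\int s = \inf_{\N}\Sigma_{\N} \le \Sigma_{\N_1}$), hence $\Sigma_{\N_K} + \int s \le 2\Sigma_{\N_1}$. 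This gives $Q_{n,\N_K} \le \frac{2\sqrt{2d}\,M\,\Sigma_{\N_1}}{n}(1/K)^{\alpha/d}$, which is exactly the claimed bound.

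The main obstacle, such as it is, is Step one: controlling $\Sigma_{\N_K} - \int s$ with the right constant. The conceptual content is elementary — a H\"older modulus of continuity on a cube of known diameter — but one must be careful to pass the bound through the square root correctly (the variance-of-$f$ term and the variance-of-$s$ term both sit inside $\si_k^2$, and then $\si_k$ itself must be compared to the local average of $s$), and to combine the two error contributions so that the final constant is $\sqrt{2d}\,M$ rather than, say, $2d^{1/2}M$; this is a matter of applying subadditivity of $\sqrt{\cdot}$ in the most economical order. Everything else — the product-of-factors manipulation for $Q_{n,\N_K}$, and invoking Proposition~\ref{lem:absconv} for the value of the infimum and the monotonicity inequality $\Sigma_{\N_K}\le\Sigma_{\N_1}$ — is immediate.
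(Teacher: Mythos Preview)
Your proposal is correct and follows essentially the same route as the paper. The only minor difference is in Step~1: where you first split $\si_k$ into $\sqrt{\tfrac{1}{w_k}\int s^2}$ plus an $f$-error and then split again for the $s$-error (yielding constant $2\,d^{\alpha/2}$), the paper instead writes $\si_k - \tfrac{1}{w_k}\int_{\Omega_k} s \le \sqrt{\si_k^2 - \big(\tfrac{1}{w_k}\int_{\Omega_k} s\big)^2}$ (valid since $\si_k \ge \tfrac{1}{w_k}\int_{\Omega_k} s$ by Jensen) and bounds the radicand by the sum of the $f$-variance and $s$-variance terms, each $\le M^2 d\,(1/K)^{2\alpha/d}$, giving exactly $\sqrt{2d}\,M(1/K)^{\alpha/d}$---precisely the ``handle jointly'' fix you anticipated. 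Your derivation of the $Q_{n,\N_K}$ bound via the factorization $\Sigma_{\N_K}^2 - (\int s)^2 = (\Sigma_{\N_K}-\int s)(\Sigma_{\N_K}+\int s) \le 2\Sigma_{\N_1}(\Sigma_{\N_K}-\int s)$ is identical to the paper's.
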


\begin{proof}[Sketch of Proof for Proposition~\ref{prop:noisy}]
We deduce from Assumption~\ref{ass:hold} that
\begin{align*}
\frac{1}{w_{k}}\int_{\Omega_{k}} \Big(f(x) - \frac{1}{w_{k}} \int_{\Omega_{k}} f(u)du \Big)^2dx +\frac{1}{w_{k}} \int_{\Omega_{k}} s^2(x)dx - \big(\frac{1}{w_{k}} \int_{\Omega_{k}} s(u)du\big)^2 &\leq 2M^2d (\frac{1}{K})^{2\alpha/d}.
\end{align*}
Then, by using Equation~\ref{eq:exprsigma} and by summing over all strata, we deduce Equation~\ref{eq:prop3.1}. Now the result on the quality follows from the fact that $\Sigma_{\N_K}^2 - \big(\int_{[0,1]^d} s(x)dx\big)^2 = (\Sigma_{\N_K}^2 - \big(\int_{[0,1]^d} s(x)dx\big)^2 ) (\Sigma_{\N_K}^2 + \big(\int_{[0,1]^d} s(x)dx\big)^2 ) \leq 2 \Sigma_{\N_1}(\Sigma_{\N_K}-\int_{[0,1]^d} s(x)dx)$.
\end{proof}
The full proof for this Proposition is in the Supplementary Material (Appendix~\ref{ss:ratelip}).

\subsection{General comments}\label{sec:general.comments}

\paragraph{The impact of $\alpha$ and $d$:} The quantity $Q_{n,\N_K}$ increases with the dimension $d$, because the H\"older assumption becomes less constraining when $d$ increases. This can easily be seen since a squared strata of measure $w$ has a diameter of order $w^{1/d}$. $Q_{n,\N_K}$ decreases with the smoothness $\alpha$ of the function, which is a logic effect of the H\"older assumption. Note also that when defining the partitions $\N_K$ in Definition~\ref{ass:stratas}, we made the crucial assumption that $K^{1/d}$ is an integer. This fact is of little importance in small dimension, but will matter in high dimension, as we will enlighten in the last remark of Section~\ref{s:tradeoff}.

\paragraph{Minimax optimality of this rate:} The rate $n^{-1} K^{-\alpha/d}$ is minimax optimal on the class of $\alpha-$H\"older functions since for any $n$ and $K$ one can easily build a function with H\"older exponent $\alpha$ such that the corresponding $\Sigma_{\N_K}$ is at least $\int_{[0,1]^d} s(x)dx + c K^{-\alpha/d}$ for some constant $c$.

\paragraph{Discussion on the shape of the strata:} Whatever the shape of the strata, as long as their diameter goes to $0$\footnote{And note that in this \textit{noisy} setting, if the diameter of the strata does not go to $0$ on non homogeneous part of $m$ and $s$, then the standard deviation corresponding to the allocation is larger than $\int_{[0,1]^d} s(u)du$.}, $\Sigma_{\N_K}$ converges to $\int_{[0,1]^d} s(x)dx$. The shape of the strata have an influence only on the negligible term, i.e.~the speed of convergence to this quantity. This result was already made explicit, in a different setting and under different assumptions, in \citep{A-EtoForJouMou11}. Choosing small strata of same shape and size is also minimax optimal on the class of H\"older functions. Working on the shape of the strata could, however, improve the speed of convergence in some specific cases, e.g.~when the noise is very localized. It could also be interesting to consider strata of varying size, and make this size depend on the specific problem. 

\paragraph{The decomposition of the variance:} Note that the variance $\sigma_k^2$ within each stratum $\Omega_k$ comes from two sources. First, $\sigma_k^2$ comes from the noise, that contributes to  it by $\frac{1}{w_k} \int_{\Omega_k} s(x)^2 dx$. Second, the mean $f$ is not a constant function, thus its contribution to $\sigma_k^2$  is $\frac{1}{w_k} \int_{\Omega_k} \big(f(x) - \frac{1}{w_k} \int_{\Omega_k} f(u)du\big)^2 dx$. Note that when the size of $\Omega_k$ goes to $0$, this later contribution vanishes, and the optimal allocation is thus proportional to $\sqrt{w_k \int_{\Omega_k} s(x)^2 dx + o(1)} = \int_{\Omega_k} s(x)dx + o(1)$. This means that for small strata, the variation in the mean are negligible when compared to the variation due to the noise.

\section{Algorithm MC-UCB and a matching lower bound}\label{s:algo}

\subsection{Algorithm $MC-UCB$}\label{ss:algo}

In this Subsection, we describe a slight modification of the algorithm $MC-UCB$ introduced in \citep{MC-UCB}. The only difference is that we change the form of the high-probability upper confidence bound on the standard deviations, in order to improve the elegance of the proofs, and we refine their analysis. The algorithm takes as input two parameters $b$ and $f_{\max}$ which are linked to the distribution of the arms, $\delta$ which is a (small) probability, and the partition $\N_K$. 
We remind in Figure~\ref{f:m-algorithm} the algorithm $MC-UCB$.

\begin{figure}[ht]
\bookbox{
\begin{algorithmic}
\STATE \textbf{Input:} $b$, $f_{\max}$, $\delta$, $\N_K$, set $A = 2\sqrt{(1 + 3b + 4f_{\max}^2)\log(2nK/\delta)}$
\STATE \textbf{Initialize:} Sample $2$ states in each strata.
\FOR{$t = 2K+1,\ldots,n$}
  \STATE Compute $B_{k,t} = \frac{w_k}{T_{k,t-1}} \Big( \hsi_{k,t-1} + A \sqrt{\frac{1}{T_{k,t-1}}}  \Big)$ for each stratum $k \leq K$
  \STATE Sample a point in stratum $k_t\in\arg\max_{1\leq k \leq K} B_{k,t}\quad$
\ENDFOR
\STATE \textbf{Output:} $\hmu_{n} = \sum_{k=1}^K w_k \hmu_{k,n}$
\end{algorithmic}}
\caption{The pseudo-code of the MC-UCB algorithm. The empirical standard deviations and means $\hsi_{k,t}^2$ and $\hmu_{k,t}$ are computed using Equations~\ref{eq:estim-var2} and ~\ref{eq:estim-mean2}.}\label{f:m-algorithm}
\end{figure}

The estimates of $\hsi_{k,t-1}^2$ and $\hmu_{k,t-1}$ are computed according to
\begin{equation}\label{eq:estim-var2}
\hsi_{k,t-1}^2 = \frac{1}{T_{k,t-1}} \sum_{i=1}^{T_{k,t-1}} (X_{k,i}-\hmu_{k,t-1})^2\;,
\end{equation}
and
\begin{equation}\label{eq:estim-mean2}
\hmu_{k,t-1} = \frac{1}{T_{k,t-1}} \sum_{i=1}^{T_{k,t-1}} X_{k,i}\;.
\end{equation}

%
%
%
%
%
%
%
%


\subsection{Upper bound on the pseudo-regret of algorithm MC-UCB.}

We first state the following Assumption on the noise $\no_t$:
\begin{assumption}\label{ass:distr2}
There exist $b>0$ such that $\forall x \in [0,1]^d$, $\forall t$, and $\forall \lambda <\frac{1}{b}$,
$$\E_{\nu_x}\Big[ \exp(\lambda \no_t ) \Big] \leq \exp\Big( \frac{\lambda^2 }{2(1 - \lambda b)}\Big),
\quad \mbox{ and }\quad 
\E_{\nu_x}\Big[ \exp(\lambda \no_t^2 - \lambda) \Big] \leq \exp\Big( \frac{\lambda^2 }{2(1 - \lambda b)}\Big).$$
\end{assumption}

This is a kind of sub-Gaussian assumption, satisfied for e.g., Gaussian as well as bounded distributions.
We also state an assumption on $f$ and $s$.

\begin{assumption}\label{ass:bound}
The functions $f$ and $s$ are bounded by $f_{\max}$.
\end{assumption}

Note that since the functions $f$ and $s$ are defined on $[0,1]^d$, if Assumption~\ref{ass:hold} is satisfied, then Assumption~\ref{ass:bound} holds with $f_{\max} = \max(f(0),s(0)) + M d^{\alpha/2}$.
We now prove the following bound on the pseudo-regret. Note that we state it on partitions $\N_K$, but that it in fact holds for any partition in $K$ strata.

\begin{proposition}\label{prop:m-regret}

Under Assumptions~\ref{ass:distr2} and~\ref{ass:bound}, on partition $\N_K$, when $n\geq 4K$, we have 
\begin{equation*}
\E[R_{n,\N_K}(\alg_{MC-UCB})] \leq  
 24\sqrt{2} \Sigma_{\N_K}\sqrt{(1 + 3b + 4 f_{\max}^2)}\Big(\frac{f_{\max} +  4}{4}\Big)^{1/3} \frac{K^{1/3}}{n^{4/3}}\sqrt{\log(nK)} + \frac{14K\Sigma_{\N_K}^2}{n^2}.
\end{equation*}
\end{proposition}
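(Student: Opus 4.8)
The plan is to follow the standard UCB regret-decomposition approach, adapted to the stratified sampling setting, where the key refinement over \citep{MC-UCB} is a sharper handling of the dependence on $K$. First I would set up a high-probability event $\xi$ on which the confidence bounds hold: using Assumption~\ref{ass:distr2} and a Bernstein-type concentration inequality for the empirical standard deviations $\hsi_{k,t}$, combined with a union bound over all $k \leq K$ and all $t \leq n$, one gets that with probability at least $1-\delta$, for every stratum $k$ and every time $t$, $|\hsi_{k,t-1} - \si_k| \leq A\sqrt{1/T_{k,t-1}}$ (this is exactly why $A$ is chosen with the $\log(2nK/\delta)$ factor). On $\xi$, the true quantity $w_k\si_k/T_{k,t-1}$ is sandwiched between $B_{k,t}$ and something like $B_{k,t}$ minus a lower-order term, so the arm selection rule $k_t \in \argmax_k B_{k,t}$ can be analyzed the usual way.

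The core of the argument is then to lower-bound $T_{k,n}$ for each stratum on the event $\xi$. The plan is to show that on $\xi$, for each $k$, $T_{k,n} \geq \lambda_k n - c\sqrt{T_{k,n}}\,\frac{\text{something}}{\text{something}}$ or, more precisely, that $T_{k,n}$ is not much smaller than the oracle allocation $T_k^* = \lambda_k n$, with a deficit controlled by $A$ and the $B$-values. The standard trick: if stratum $k$ is pulled at time $t$, then $B_{k,t} \geq B_{j,t}$ for all $j$; summing/telescoping and using $\sum_j T_{j,n} = n$ yields a bound of the form $T_{k,n} \gtrsim \lambda_k n - O(A \sqrt{T_{k,n}}/\Sigma_{\N_K} \cdot \text{(number of strata factor)})$. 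Solving this quadratic inequality in $\sqrt{T_{k,n}}$ gives $T_{k,n} \geq \lambda_k n - O\big(A \sqrt{\lambda_k n}/\Sigma_{\N_K}\big) - O(A^2/\Sigma_{\N_K}^2)$, or some variant. Plugging this into the definition $L_{n,\N_K}(\alg) = \sum_k w_k^2\si_k^2/T_{k,n}$, using $1/(T_k^* - x) \approx 1/T_k^* + x/(T_k^*)^2$, and summing over $k$ gives a regret bound whose leading term scales like $\sum_k w_k^2\si_k^2 \cdot A\sqrt{\lambda_k n}/( (\lambda_k n)^2 \Sigma_{\N_K})$. The crucial combinatorial step is to evaluate $\sum_k w_k^2\si_k^2/(\lambda_k n)^{3/2} \cdot \sqrt{1/\Sigma_{\N_K}}$ and show, via Cauchy–Schwarz or Hölder over the $K$ strata together with the identity $\lambda_k = w_k\si_k/\Sigma_{\N_K}$, that this collapses to something of order $\Sigma_{\N_K} K^{1/3} n^{-4/3}$ rather than $K n^{-4/3}$; this is where the improved $K^{1/3}$ (as opposed to $K$) comes from, and where I must be careful about which power of $K$ a sum like $\sum_k \lambda_k^{-1/2}$ contributes under the normalization $\sum_k \lambda_k = 1$. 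The residual $O(A^2/\Sigma_{\N_K}^2)$-type terms, summed over $k$ and divided by appropriate powers of $n$, must be checked to give the lower-order $14K\Sigma_{\N_K}^2/n^2$ term; I would also need the initialization phase (sampling $2$ states per stratum, requiring $n \geq 4K$) to contribute only to this lower-order term. Finally, off the event $\xi$ (probability $\leq \delta$), the pseudo-regret is crudely bounded (each $T_{k,n} \geq 2$, so $L_{n,\N_K}(\alg)$ is bounded by something like $\frac{1}{2}\sum_k w_k^2\si_k^2 \leq \frac{1}{2}\Sigma_{\N_K}^2$ or a polynomial in $n,K,f_{\max}$), and choosing $\delta$ of order $1/n$ (or $1/n^2$) makes the $\delta$-contribution to $\E[R_{n,\N_K}]$ negligible — this is why only $\sqrt{\log(nK)}$ survives in the final bound rather than an explicit $\delta$.

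The main obstacle, and the step deserving the most care, is the combinatorial/Hölder manipulation that turns the per-stratum deficit bounds into the aggregate $K^{1/3}n^{-4/3}$ rate with the correct constant $24\sqrt{2}\Sigma_{\N_K}\sqrt{1+3b+4f_{\max}^2}\big((f_{\max}+4)/4\big)^{1/3}$. One has to optimize the split between the "exploration deficit" (how far $T_{k,n}$ falls below $\lambda_k n$) and its effect on the risk, and the exponent $1/3$ and the factor $(f_{\max}+4)^{1/3}$ strongly suggest a three-way Hölder or an explicit optimization over a free parameter (e.g. a threshold on $T_{k,n}$ separating "well-sampled" from "under-sampled" strata) — getting the bookkeeping of this optimization right, uniformly in the $\lambda_k$'s, is the delicate part. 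A secondary subtlety is that $\hmu_{k,n}$ and $T_{k,n}$ are not independent (the sampled points and their count are coupled through the adaptive rule), so one cannot directly invoke the clean variance formula $\V(\hmu_n) = \sum_k w_k^2\si_k^2/T_{k,n}$; the pseudo-regret $R_{n,\N_K}$ is defined via the pseudo-risk $L_{n,\N_K}(\alg) = \sum_k w_k^2\si_k^2/T_{k,n}$ precisely to sidestep this, so I would keep the analysis entirely at the level of $T_{k,n}$ and never need to control the estimate's actual variance.
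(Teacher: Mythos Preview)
Your setup of the high-probability event $\xi$, the handling of the complement via $\delta=n^{-2}$, and the observation that the analysis should stay at the level of the pseudo-risk are all correct and match the paper. The gap is in the core step: how the $K^{1/3}$ arises.

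You propose to show $T_{k,n}\geq \lambda_k n - O(A\sqrt{\lambda_k n}/\Sigma_{\N_K})$ and then Taylor-expand and apply H\"older/Cauchy--Schwarz. Two problems. First, a per-arm deficit of this form is not what the UCB comparison actually yields: the selection inequality at the last pull of an arm $q$ gives, after summing,
\[
\frac{w_p\sigma_p}{T_{p,n}}(n-3K)\ \leq\ \Sigma_{\N_K} + 2A\sum_{q} \frac{w_q}{\sqrt{T_{q,n}-1}},
\]
so the ``deficit'' for each $p$ is governed by the \emph{aggregate} exploration term $\sum_q w_q/\sqrt{T_{q,n}}$, not by $\sqrt{\lambda_p n}$ alone. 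Second, even granting your per-arm deficit, your own computation gives a regret $\approx A\Sigma_{\N_K} n^{-3/2}\sum_k \lambda_k^{1/2}\leq A\Sigma_{\N_K} K^{1/2} n^{-3/2}$, not $K^{1/3}n^{-4/3}$; and quantities like $\sum_k \lambda_k^{-1/2}$ blow up when some $\sigma_k=0$, a case the argument must handle.

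What the paper actually does, and what you are missing, is a \emph{bootstrap} rather than a H\"older step. Using only the exploration part of the bonus, $B_{q,t}\geq A w_q/T_{q,t}^{3/2}$, together with a pigeonhole arm $k$ satisfying $T_{k,n}\geq n/K$, one first obtains a crude but \emph{uniform} lower bound $T_{q,n}\geq C(n/K)^{2/3}$ for every $q$, with $C=(2A/(\max_k\sigma_k+2A))^{2/3}$. Plugging this directly into the aggregate exploration term gives $\sum_q w_q/\sqrt{T_{q,n}-1}\leq \sqrt{2/C}\,(K/n)^{1/3}$, and hence $w_p\sigma_p/T_{p,n}\leq \Sigma_{\N_K}/n + O(AK^{1/3}n^{-4/3})$; multiplying by $w_p\sigma_p$ and summing finishes. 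The exponent $1/3$ and the factor $((f_{\max}+4)/4)^{1/3}=1/\sqrt{C}$ come from this two-step bootstrap (the $3/2$ power in $B_{q,t}$ becoming a $2/3$ power in the crude lower bound), not from a three-way H\"older or a threshold optimization.
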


The proof, given in the Supplementary Material (Appendix~\ref{s:m-results}), is close to the one of MC-UCB in~\citep{MC-UCB}. But an improved analysis leads to a better dependency in terms of number of strata $K$. We remind that in paper~\citep{MC-UCB}, the bound is of order $\tilde O(K n^{-4/3})$.
This improvement is crucial here since the larger $K$ is, the closer $\Sigma_{\N_K}$ is from $\int_{[0,1]^d} s(x) dx$. The next Subsection states that the rate $K^{1/3}\tilde O(n^{-4/3})$ of MC-UCB is optimal both in terms of $K$ and $n$.


\subsection{Lower Bound}

We now study the minimax rate for the pseudo-regret of any algorithm on a given partition $\N_K$. Note that we state it for partitions $\N_K$, but that it holds for any partition in $K$ strata of equal measure.

\begin{theorem}\label{th:lowbound}
Let $K \in \mathbb N$. Let $\inf$ be the infimum taken over all online stratified sampling algorithms on $\N_K$ and $\sup$ represent the supremum taken over all environments, then:
\begin{equation*}
 \inf \sup \E[ R_{n,\N_K}] \geq C\frac{K^{1/3}}{n^{4/3}},
\end{equation*}
where $C$ is a numerical constant.

\end{theorem}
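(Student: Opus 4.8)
\textbf{Proof plan for Theorem~\ref{th:lowbound}.}

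The plan is to reduce the online stratified sampling problem to a multi-armed-bandit-style estimation problem and then apply a change-of-measure / two-point (or multi-point) argument. First I would construct a family of environments indexed by which stratum is the ``hard'' one: all strata are Bernoulli-like with a common small baseline standard deviation, except that in environment $j$ the stratum $\Omega_j$ has a slightly larger variance $\sigma_j^2 = \sigma_0^2 + \Delta$ with $\Delta$ a tuning parameter, while in the base environment all variances equal $\sigma_0^2$. The optimal oracle allocation $\lambda_k = w_k\sigma_k/\Sigma_{\N_K}$ differs between these environments only in the $j$-th coordinate, by an amount of order $\Delta/(K\sigma_0)$ after accounting for $w_k = 1/K$. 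Because $R_{n,\N_K}(\alg) = \sum_k w_k^2\sigma_k^2/T_{k,n} - \Sigma_{\N_K}^2/n$ is, up to second order, a weighted quadratic in the allocation deviations $T_{k,n}/n - \lambda_k$, an algorithm that does not allocate (roughly) $\lambda_j n$ samples to the right stratum pays a regret quadratic in the mismatch. The key quantitative point is: to distinguish environment $j$ from the base environment one needs of order $T_{j,n} \gtrsim 1/\Delta^2$ samples in stratum $j$ (the KL divergence between the two single-sample laws in that stratum is $\Theta(\Delta^2)$, so by Pinsker / the standard information-theoretic lower bound the number of pulls needed for reliable detection is $\Omega(1/\Delta^2)$).

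Next I would run a counting argument over the $K$ environments. Fix a budget $n$. If an algorithm allocated $\Omega(1/\Delta^2)$ samples to \emph{every} stratum it would need $n \gtrsim K/\Delta^2$; so whenever $n \ll K/\Delta^2$, for a constant fraction of the indices $j$ the algorithm pulls stratum $j$ fewer than $c/\Delta^2$ times, and on those environments it cannot have learned that $\sigma_j$ is the large one, hence mis-allocates and incurs regret of order (mismatch in $\lambda_j$)${}^2 \cdot (\text{something})/n$. A single coordinate mismatch of size $\delta\lambda \sim \Delta/(K\sigma_0)$ contributes to the pseudo-regret a term of order $w_j^2 \sigma_j^2 (\delta\lambda/\lambda_j)^2 / (\lambda_j n) \sim (\delta\lambda)^2 \Sigma_{\N_K}/(w_j \sigma_j n)$; plugging $w_j = 1/K$, $\sigma_j \asymp \sigma_0 \asymp \Sigma_{\N_K}$, this is of order $(\Delta/(K\sigma_0))^2 \cdot K / n = \Delta^2/(K\sigma_0^2 n)$. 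Then I would optimize $\Delta$ against the detection constraint $n \asymp K/\Delta^2$, i.e. $\Delta^2 \asymp K/n$, which makes the per-environment regret of order $(K/n)/(K n) = 1/n^2$ --- too small, so the quadratic-regret bookkeeping must be done more carefully: the correct scaling comes from balancing the regret \emph{incurred by over-exploring} a wrongly-guessed stratum against the regret of under-exploring the right one, which is the standard source of the $n^{-4/3}$ exponent in variance-estimation / stratified problems (as in the MC-UCB analysis). Concretely, exploring a stratum to precision needed to decide at level $\Delta$ costs $\Theta(1/\Delta^2)$ pulls whose ``price'' in regret is $\Theta(1/(\Delta^2 n))$ relative to the oracle (wasted relative allocation of size $\sim 1/(\Delta^2 n)$ times the curvature $\sim n$), giving regret $\gtrsim \min_j\big( \Delta^2/(K n) \; , \; 1/(\Delta^2 n) \big)$ per relevant environment after correctly normalizing; balancing the two terms gives $\Delta^2 \asymp \sqrt{K}$, wait --- this must be reconciled with $\Delta \le$ O(1), so the free parameter is really $\Delta$ and one sets $\Delta^4 \asymp 1/K$ up to the budget, yielding the claimed $\Omega(K^{1/3} n^{-4/3})$ after the averaging over the $\Theta(K)$ environments contributes the extra $K^{1/3}$ (each of $\Theta(K)$ environments sharing a budget $n$ so the ``effective per-stratum budget'' is $n/K$, and a single stratified-estimation lower bound of the form $(n/K)^{-4/3}$ per stratum summed/weighted appropriately rescales to $K^{1/3} n^{-4/3}$).

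The cleanest route, which I would actually write up, is therefore: (i) reduce to $K$ parallel copies of a \emph{single-stratum} variance-learning sub-problem with shared budget $n$; (ii) prove a one-stratum lemma stating that any adaptive sampler given $\tau$ pulls of a stratum whose variance is unknown in a $\Theta(\Delta)$ window suffers excess $w^2$-weighted variance cost $\Omega\big(\min(\Delta^2 \tau, 1/(\Delta^2 \tau))\big)$ in expectation over a two-point prior, optimized at $\tau \asymp \Delta^{-2}$ giving $\Omega(\Delta^{-?})$ --- more precisely the familiar $\tau^{-4/3}$ after optimizing $\Delta$; (iii) note $\sum_k \tau_k = n$ with $K$ strata forces $\min_k \tau_k \le n/K$ on a constant fraction, so the total regret is $\gtrsim K \cdot (n/K)^{-4/3}\cdot (\text{normalization } w^2\sigma^2 = K^{-2}\Theta(1))$; carefully tracking the $w_k^2 = K^{-2}$ and the $\Sigma_{\N_K}^2 = \Theta(1)$ normalizations turns $K \cdot K^{-2} \cdot (n/K)^{-4/3}$ into $\Theta(K^{1/3} n^{-4/3})$. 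The main obstacle --- and the part I expect to be delicate --- is step (ii): getting a \emph{lower} bound on the excess variance cost of an \emph{adaptive} algorithm on a single stratum, because the algorithm can choose when to stop sampling a stratum based on what it has seen; this requires a genuine change-of-measure (Le Cam / Assouad-type) argument controlling the total-variation distance between the trajectory laws under the base and the perturbed environment uniformly over stopping rules, and then converting a ``wrong allocation'' event into a quadratic regret lower bound. The authors' remark that ``the proof technique for this bound is original'' strongly suggests the subtlety lies exactly here: the coupling of the adaptive stopping with the $\Delta$-vs-$\tau$ trade-off across all $K$ strata simultaneously, rather than in any single routine estimate.
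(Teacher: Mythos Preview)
Your first attempt is honest: perturbing a single stratum by $\Delta$ and running a two-point KL argument gives at best $\Delta^2/(Kn)$ regret against a detection constraint $\Delta^2 \lesssim K/n$, hence $O(1/n^2)$, which is indeed too weak. The problem is not the bookkeeping but the construction itself, and your second and third attempts do not repair it.

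The third route (``$K$ parallel single-stratum sub-problems'') has two genuine gaps. First, the pseudo-regret $R_{n,\N_K}=\sum_k w_k^2\sigma_k^2/T_{k,n}-\Sigma_{\N_K}^2/n$ is \emph{not} separable across strata: the constraint $\sum_k T_{k,n}=n$ and the normaliser $\Sigma_{\N_K}$ couple all coordinates, so there is no well-posed ``single-stratum lemma'' with its own budget $\tau$ and its own $\tau^{-4/3}$ lower bound that you can sum. Second, pigeonhole gives only one stratum with $\tau_k\le n/K$, not a constant fraction; an algorithm could allocate almost uniformly and still be near-optimal for every environment in a family where only one stratum is perturbed. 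The arithmetic $K\cdot K^{-2}\cdot (n/K)^{-4/3}=K^{1/3}n^{-4/3}$ is correct but the two factors of $K$ in front of it are not justified by the argument you sketch.

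The paper's proof differs from your plan in three essential ways, all of which are needed to lift the bound from $n^{-2}$ to $K^{1/3}n^{-4/3}$. (i) It uses $2K$ strata, not $K$: the last $K$ are reference arms with large variance ($\sigma=1/2$), so the oracle allocates only $O(\sigma n)$ samples in total to the first $K$ ``hard'' arms. This makes the relevant per-arm budget $\sigma n/K$ rather than $n/K$, and the KL between environments scales like $\sigma^3 n$ rather than $n/K$. (ii) The baseline standard deviation $\sigma$ of the hard arms is a free parameter tuned to $(K/n)^{1/3}$; this is where the exponents come from, not from a per-stratum $\tau^{-4/3}$ rate. (iii) The family of environments is a full hypercube $\{-1,+1\}^K$ (each hard arm independently perturbed up or down), and the key combinatorial step is to show that for \emph{some} vertex $\upsilon$, any algorithm must mis-allocate a constant fraction ($\ge K/3$) of the hard arms with probability $\ge 1/2$. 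This ``many arms wrong at once'' event is what produces an aggregate mis-allocation $\Delta T \asymp \sigma n \asymp K^{1/3}n^{2/3}$, and then the curvature of the pseudo-risk converts this into regret $\asymp (\Delta T)^2/(\sigma n^3)\asymp K^{1/3}n^{-4/3}$. Your constructions, which perturb a single stratum or treat strata independently, cannot force a $\Theta(K)$-sized joint mis-allocation, and that is precisely the missing idea the authors flag as original.
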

\begin{proof}[Sketch of proof (The full proof is reported in Appendix~\ref{app:lowbound})]
We consider a partition with $2K$ strata. On the $K$ first strata, the samples are drawn from Bernoulli distributions of parameter $\mu_k$ where $\mu_k \in \{\frac{\mu}{2}, \mu,3\frac{\mu}{2}\}$, and on the $K$ last strata, the samples are drawn from a Bernoulli of parameter $1/2$. We write $\sigma = \sqrt{\mu(1-\mu)}$ the standard deviation of a Bernoulli of parameter $\mu$. We index by $\upsilon$ a set of $2^K$ possible environments, where $\upsilon = (\upsilon_1, \ldots, \upsilon_K) \in \{-1,+1\}^K$, and the $K$ first strata are defined by $\mu_k = \mu + \upsilon_k \frac{\mu}{2}$. Write $\P_{\sigma}$ the probability under such an environment, also consider $\P_{\sigma}$ the probability under which all the $K$ first strata are Bernoulli with mean $\mu$.

We define $\Omega_{\upsilon}$ the event on which there are less than $\frac{K}{3}$ arms not pulled correctly for environment $\upsilon$ (i.e.~for which $T_{k,n}$ is larger than the optimal allocation corresponding to $\mu$ when actually $\mu_k=\frac{\mu}{2}$, or smaller than the optimal allocation corresponding to $\mu$ when $\mu_k=3\frac{\mu}{2}$). See the Appendix~\ref{app:lowbound} for a precise definition of these events.
Then, the idea is that there are so many such environments that any algorithm will be such that for at least one of them we have $\P_{\sigma}(\Omega_{\upsilon}) \leq \exp(-K/72)$. Then we derive by a variant of Pinsker's inequality applied to an event of small probability that $\P_{\upsilon}(\Omega_{\upsilon}) \leq \frac{KL(\P_{\sigma}, \P_{\upsilon})}{K} = O(\frac{\sigma^{3/2} n}{K})$. Finally, by choosing $\sigma$ of order $(\frac{K}{n})^{1/3}$, we have that $\P_{\upsilon}(\Omega_{\upsilon}^c)$ is bigger than a constant, and on $\Omega_{\upsilon}^c$ we know that there are more than $\frac{K}{3}$ arms not pulled correctly. This leads to an expected pseudo-regret in environment $\upsilon$ of order $\Omega(\frac{K^{1/3}}{n^{4/3}})$.
\end{proof}

This is the first lower-bound for the problem of online stratified sampling for Monte-Carlo. Note that this bound is of same order as the upper bound for the pseudo-regret of algorithm MC-UCB. It means that this algorithm is, up to a constant, minimax optimal, both in terms of the number of samples and in terms of the number of strata. It however holds only on the partitions $\N_K$ (we conjecture that a similar result holds for \textit{any} measurable partition $\N$, but with a bound of order $\Omega\Big(\sum_{x \in \N} \frac{w_x^{2/3}}{n^{4/3}}\Big)$).


\section{Best trade-off between $Q_{n,\N_K}$ and $R_{n,\N_K} (\alg_{MC-UCB})$}\label{s:tradeoff}

\subsection{Best trade-off}

We consider in this Section the hyper-cubic partitions $\N_K$ as defined in Definition~\ref{ass:stratas}, and we want to find the best number of strata $K_n$ as a function of $n$.
Using the results in Section~\ref{s:conver}~and Subsection~\ref{ss:algo}, it is possible to deduce an optimal number of strata $K$ to give as parameter to algorithm $MC-UCB$. Note that since the performance of the algorithm is defined as the sum of the quality of partition $\N_K$, i.e.~$Q_{n,\N_K}$ and of the pseudo-regret of the algorithm MC-UCB, namely $R_{n,\N_{K}}(\alg_{MC-UCB})$, one wants to (i) on the one hand take many strata so that $Q_{n,\N_{K}}$ is small but (ii) on the other hand, pay attention to the impact this number of strata has on the pseudo-regret $R_{n,\N_{K}}(\alg_{MC-UCB})$. A good way to do that is to choose $K_n$ in function of $n$ such that $Q_{n,\N_{K_n}}$ and $R_{n,\N_{K_n}}(\alg_{MC-UCB})$ are of the same order.

\begin{theorem}\label{th:tradeoff} 
Under Assumptions~\ref{ass:hold} and~\ref{ass:distr2} (since on $[0,1]^d$, Assumption~\ref{ass:hold} implies Assumption~\ref{ass:bound}, by setting $f_{\max} = X(1) + Md^{\alpha/2}$), choosing $K_n = \Big(\lfloor (n^{\frac{d}{d+3\alpha}})^{1/d} \rfloor \Big)^d (\leq n^{\frac{d}{d+3\alpha}} \leq n)$, we have 
\begin{align*}
 \E[ L_n(\alg_{MC-UCB})] - \frac 1n \Big(\int_{[0,1]^d} s(x) dx\Big)^2 
\leq  C  d^{\frac{2\alpha}{3d} + \frac12} \sqrt{\log(n)} n^{-\frac{d+4\alpha}{d+3\alpha}} (1+d^{\alpha}n^{-\frac{\alpha}{d + 3 \alpha}}),
\end{align*}
where $c = 70 (1+M) \Sigma_{\N_K}\sqrt{(1 + 3b + 4 (f(0) + s(0) + M)^2)}\Big(\frac{(f(0) + s(0) + M) +  4}{4}\Big)^{1/3}$.
\\
This leads to, if $d \ll n$, the simplified bound is
\begin{equation*}
 \E[ L_n(\alg_{MC-UCB})] - \frac 1n \Big(\int_{[0,1]^d} s(x) dx\Big)^2 = \tilde O(n^{-\frac{d+4\alpha}{d+3\alpha}}).
\end{equation*}
\end{theorem}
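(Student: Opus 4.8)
\textbf{Proof plan for Theorem~\ref{th:tradeoff}.}
The strategy is the standard bias–variance (here: approximation–estimation) decomposition already set up in Equation~\ref{eq:regret2}, namely
$\E[L_n(\alg_{MC-UCB})] - \inf_{\N' \text{ measurable}} L_{n,\N'}(\alg^*) = \E[R_{n,\N_{K_n}}(\alg_{MC-UCB})] + Q_{n,\N_{K_n}}$,
combined with Proposition~\ref{lem:absconv}, which identifies $\inf_{\N'} L_{n,\N'}(\alg^*) = \frac1n\big(\int_{[0,1]^d}s(x)dx\big)^2$. So the left-hand side of the theorem is exactly $\E[R_{n,\N_{K_n}}(\alg_{MC-UCB})] + Q_{n,\N_{K_n}}$, and it remains to plug in the two bounds from Proposition~\ref{prop:noisy} and Proposition~\ref{prop:m-regret} and balance them over $K$.

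First I would invoke Proposition~\ref{prop:noisy} to get $Q_{n,\N_K} \leq \frac{2\sqrt{2d}M\Sigma_{\N_1}}{n}K^{-\alpha/d}$, and Proposition~\ref{prop:m-regret} to get $\E[R_{n,\N_K}(\alg_{MC-UCB})] \leq c' \Sigma_{\N_K}\sqrt{\log(nK)}\,\frac{K^{1/3}}{n^{4/3}} + \frac{14K\Sigma_{\N_K}^2}{n^2}$, where $c'$ collects the numerical and distribution-dependent constants (using $f_{\max}= f(0)+s(0)+Md^{\alpha/2}$ as noted in the theorem statement, since Assumption~\ref{ass:hold} implies Assumption~\ref{ass:bound}). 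Bounding $\Sigma_{\N_K}\leq \Sigma_{\N_1}$ (monotonicity of $\Sigma$ under refinement) makes the $K$-dependence fully explicit. Balancing the two leading terms $n^{-1}K^{-\alpha/d}$ and $n^{-4/3}K^{1/3}$ means solving $K^{-\alpha/d - 1/3} \asymp n^{-1/3}$, i.e. $K \asymp n^{\frac{d}{d+3\alpha}}$; this is exactly the choice $K_n = \big(\lfloor n^{\frac{1}{d+3\alpha}}\rfloor\big)^d$ in the statement, which respects the integrality constraint $K_n^{1/d}\in\mathbb N$ from Definition~\ref{ass:stratas}. Substituting $K_n\leq n^{\frac{d}{d+3\alpha}}$ into both leading terms gives order $n^{-1}\cdot n^{-\frac{\alpha}{d+3\alpha}} = n^{-\frac{d+4\alpha}{d+3\alpha}}$ up to $\sqrt{\log n}$, which is the announced rate; the residual term $14K\Sigma_{\N_K}^2/n^2 \leq 14\Sigma_{\N_1}^2 n^{\frac{d}{d+3\alpha}-2}$ is of strictly smaller order and gets absorbed.

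The one genuinely delicate point — and the reason the stated bound carries the extra factor $(1+d^\alpha n^{-\alpha/(d+3\alpha)})$ and the $d^{\frac{2\alpha}{3d}+\frac12}$ prefactor — is controlling the loss from the floor in $K_n$. Writing $l_n=\lfloor n^{\frac{1}{d+3\alpha}}\rfloor$ so $K_n=l_n^d$, one has $K_n \geq (n^{\frac{1}{d+3\alpha}}-1)^d = n^{\frac{d}{d+3\alpha}}(1 - n^{-\frac{1}{d+3\alpha}})^d$, and in the approximation term $Q_{n,\N_{K_n}}\propto K_n^{-\alpha/d}$ this lower bound on $K_n$ is what matters; the factor $(1 - n^{-\frac{1}{d+3\alpha}})^{-\alpha}$ expands, via $(1-x)^{-\alpha}\leq 1 + c_\alpha x$ for $x$ bounded away from $1$, into the $(1+d^\alpha n^{-\frac{\alpha}{d+3\alpha}})$ correction once one tracks the $d$-dependence carefully (the binomial-type loss $(n^{1/(d+3\alpha)}-1)^d$ versus $n^{d/(d+3\alpha)}$ is where the $d^\alpha$ enters). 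Collecting the $\sqrt d$ from Proposition~\ref{prop:noisy}, the $d^{\alpha/2}$ hidden in $f_{\max}$, and the cube-root $d^{2\alpha/(3d)}$-type factors from the $(f_{\max}+4)^{1/3}$ term of Proposition~\ref{prop:m-regret} yields the prefactor $d^{\frac{2\alpha}{3d}+\frac12}$. Finally, when $d\ll n$ all these polynomial-in-$d$ factors and the floor-correction are $O(1)$, the $\sqrt{\log(nK_n)}$ is $\tilde O(1)$ absorbed into the $\tilde O$, and one recovers the clean statement $\E[L_n(\alg_{MC-UCB})] - \frac1n(\int s)^2 = \tilde O(n^{-\frac{d+4\alpha}{d+3\alpha}})$. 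The main obstacle is thus purely bookkeeping — tracking the dimension-dependent constants and the integer-rounding loss — rather than any new idea: the decomposition and the two ingredient bounds do all the conceptual work.
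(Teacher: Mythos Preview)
Your proposal is correct and follows essentially the same approach as the paper: decompose via Equation~\ref{eq:regret2}, identify $\inf_{\N'} L_{n,\N'}(\alg^*)=\frac1n(\int s)^2$ through Proposition~\ref{lem:absconv}, plug in Propositions~\ref{prop:noisy} and~\ref{prop:m-regret}, balance $K$, and control the floor loss in $K_n$. The paper's own proof is in fact much terser---it just records the bounds $K_n \geq n^{\frac{d}{d+3\alpha}}\big(1 - d\,n^{-\frac{1}{d+3\alpha}}\big)$ and $K_n\leq n^{\frac{d}{d+3\alpha}}$ and says ``plug in''---so your write-up is if anything more explicit than theirs about where the dimension-dependent prefactors and the $(1+d^\alpha n^{-\alpha/(d+3\alpha)})$ correction come from.
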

\begin{proof}[Proof of Theorem~\ref{th:tradeoff}]
The definition of $K_n$ implies that $K_n \geq \Big( (n^{\frac{d}{d+3\alpha}} -1 )^{1/d} \Big)^d \geq n^{\frac{d}{d+3\alpha}} \Big(1 - \frac{d}{n^{\frac{1}{d}(\frac{d}{d+3\alpha})}} \Big)$. Also, trivially, $K_n \leq n^{\frac{d}{d+3\alpha}}$. By plugging these lower and upper bounds, in respectively $Q_{n, \N_{K_n}}$ and $R_{n, \N_{K_n}}$, we obtain the the final bound.
\end{proof}
We can also prove a matching minimax lower bound using the results in Theorem~\ref{th:lowbound}.
\begin{theorem}\label{th:lowbound2}
 Let $\sup$ represent the supremum taken over all $\alpha-$H\"older functions and $\inf$ be the infimum taken over all algorithms that partition the space in convex strata of same shape, then the following holds true:
\begin{equation*}
\inf \sup \E L_n(\alg) - \frac 1n \Big(\int_{[0,1]^d} s(x) dx\Big)^2 = \Omega(n^{-\frac{d+4\alpha}{d+3\alpha}}).
\end{equation*}
\end{theorem}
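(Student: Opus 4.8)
The plan is to reduce the statement to the already-established lower bound on the pseudo-regret (Theorem~\ref{th:lowbound}), combined with a matching lower bound on the quality $Q_{n,\N}$ on the class of $\alpha$-H\"older functions. Recall from equation~\eqref{eq:regret2} that, for any algorithm $\alg$ partitioning the space into $\N_K$ convex strata of the same shape,
\begin{equation*}
\E L_n(\alg) - \frac 1n\Big(\int_{[0,1]^d}s(x)dx\Big)^2 = \E[R_{n,\N_K}(\alg)] + \Big(L_{n,\N_K}(\alg^*) - \frac 1n \Big(\int_{[0,1]^d}s(x)dx\Big)^2\Big),
\end{equation*}
and both terms on the right-hand side are nonnegative (the second because $\inf_{\N' \text{ measurable}} L_{n,\N'}(\alg^*) = \frac 1n(\int s)^2$ by Proposition~\ref{lem:absconv}). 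So it suffices to show that for \emph{any} choice of the number of strata $K$ that the (space-partitioning) algorithm may use, the \emph{supremum over $\alpha$-H\"older functions} of one of the two terms is $\Omega(n^{-\frac{d+4\alpha}{d+3\alpha}})$.

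The argument is a case split on the magnitude of $K$ relative to the critical scaling $K_n \asymp n^{\frac{d}{d+3\alpha}}$. First, if $K \leq c_1 n^{\frac{d}{d+3\alpha}}$ for a suitable small constant $c_1$, I would invoke the minimax lower bound on $Q_{n,\N_K}$ over $\alpha$-H\"older functions: as noted in the ``Minimax optimality of this rate'' paragraph of Section~\ref{sec:general.comments}, for any $K$ one can build an $\alpha$-H\"older function with $\Sigma_{\N_K} \geq \int s + c\, K^{-\alpha/d}$, hence (arguing as in the proof of Proposition~\ref{prop:noisy}, multiplying by $\Sigma_{\N_K} + \int s \geq c'$) $Q_{n,\N_K} \geq \frac{c''}{n} K^{-\alpha/d} \geq \frac{c''}{n}(c_1 n^{\frac{d}{d+3\alpha}})^{-\alpha/d} = \Omega(n^{-1-\frac{\alpha}{d+3\alpha}}) = \Omega(n^{-\frac{d+4\alpha}{d+3\alpha}})$. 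One technical point: the perturbing function must simultaneously make $\Sigma_{\N_K}$ large for \emph{this particular} family of equal-shape convex strata, not just for hyper-cubes; since all strata here have the same shape and measure $1/K$, a function that oscillates at spatial frequency $\sim K^{1/d}$ aligned so that each stratum straddles a full oscillation does the job, and its H\"older norm is controlled because the oscillation amplitude is taken of order $K^{-\alpha/d}$.

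Second, if $K \geq c_1 n^{\frac{d}{d+3\alpha}}$, I would appeal to Theorem~\ref{th:lowbound}, which gives $\inf\sup_{\text{env}} \E[R_{n,\N_K}] \geq C K^{1/3} n^{-4/3} \geq C (c_1 n^{\frac{d}{d+3\alpha}})^{1/3} n^{-4/3} = \Omega(n^{\frac{d}{3(d+3\alpha)} - 4/3}) = \Omega(n^{-\frac{d+4\alpha}{d+3\alpha}})$, since $\frac{d}{3(d+3\alpha)} - \frac43 = \frac{d - 4(d+3\alpha)}{3(d+3\alpha)} = \frac{-(d+4\alpha)}{d+3\alpha}\cdot\frac{1}{1}$ — wait, let me just record that the exponents match by the very definition of $K_n$, which was chosen to balance $K^{-\alpha/d}/n$ against $K^{1/3}/n^{4/3}$. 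The remaining obstacle is to check that the hard environments constructed in the proof of Theorem~\ref{th:lowbound} (Bernoulli strata with means in $\{\mu/2,\mu,3\mu/2\}$) can be realized by functions $f$ that are $\alpha$-H\"older with a \emph{fixed} constant $M$: since those environments only prescribe the \emph{mean} $\mu_k$ and \emph{variance} within each stratum of measure $\asymp 1/K$, and the regret lower bound only uses that neighbouring strata have means differing by $\Theta(\mu) = \Theta((K/n)^{1/3})$, one can interpolate smoothly across stratum boundaries over a transition layer of width $\asymp K^{-1/d}$, incurring a H\"older constant of order $(K/n)^{1/3} \cdot K^{\alpha/d} = n^{-1/3} K^{1/3 + \alpha/d}$, which for $K \asymp n^{\frac{d}{d+3\alpha}}$ is $O(1)$ — this is the delicate bookkeeping step, and the key point is precisely that the critical $K_n$ is exactly the largest $K$ for which such smoothing is affordable within a bounded H\"older class. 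Taking $c_1$ so that both sub-cases produce the same exponent, and the max over the two cases covers all $K$, completes the proof.
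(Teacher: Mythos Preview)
Your approach is exactly the paper's: its entire proof is the single sentence ``This is a direct consequence of Theorem~\ref{th:lowbound} and the second comment of Subsection~\ref{sec:general.comments}'', i.e., precisely your two-case argument combining the pseudo-regret lower bound (for large $K$) with the minimax lower bound on the quality $Q_{n,\N_K}$ (for small $K$). You have in fact gone further than the paper by flagging---and sketching a resolution for---the question of whether the hard Bernoulli environments of Theorem~\ref{th:lowbound} can be realized by $\alpha$-H\"older $f$ and $s$ with a fixed constant $M$, a point the paper's one-line proof does not address at all.
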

\begin{proof}[Proof of Theorem~\ref{th:lowbound2}]
This is a direct consequence of Theorem~\ref{th:lowbound} and the second comment of Subsection~\ref{sec:general.comments}.
\end{proof}

\subsection{Discussion}

\paragraph{Optimal pseudo-risk.} The dominant term in the pseudo-risk of MC-UCB with proper number of strata is $\frac{(\inf_{\N} \Sigma_{\N})^2}{n} = \frac 1n \big(\int_{[0,1]^d} s(x) dx\big)^2$ (the other term is negligible). This means that algorithm MC-UCB is almost as efficient as the optimal oracle strategy on the optimal oracle partition. In comparison, the variance of the estimate given by crude Monte-Carlo is $\int_{[0,1]^d} \big(f(x) -\int_{[0,1]^d} f(u) du\big)^2 dx + \int_{[0,1]^d} s(x)^2 dx$. Thus MC-UCB enables to have the term coming from the variations in the mean vanish, and the noise term decreases (since by Cauchy-Schwarz, $\big(\int_{[0,1]^d} s(x) dx \big)^2 \leq \int_{[0,1]^d} s(x)^2 dx$).

\paragraph{minimax-optimal trade-off for algorithm MC-UCB.} The optimal trade-off on the number of strata $K_n$ of order $n^{\frac{d}{d+3\alpha}}$ depends on the dimension and the smoothness of the function. The higher the dimension, the more strata are needed in order to have a decent speed of convergence for $\Sigma_{\N_{K}}$. The smoother the function, the less strata are needed.
\\
It is yet important to remark that this trade-off is not exact. 
We provide an almost minimax-optimal order of magnitude for $K_n$, in terms of $n$, so that the rate of convergence of the algorithm is minimax-optimal up to a $\sqrt{\log(n)}$.  


\paragraph{Link between risk and pseudo-risk.} It is important to compare the pseudo-risk $L_n(\alg) = \sum_{k=1}^K \frac{w_k^2\si_k^2}{T_{k,n}}$ and the true risk $\E[ (\hmu_n - \mu)^2 ]$. Note that those quantities are in general not equal for an algorithm $\alg$ that allocates the samples in a dynamic way: indeed, the quantities $T_{k,n}$ are in that case stopping times and the variance of estimate $\hmu_n$ is not equal to the pseudo-risk. However, in the paper~\citep{rapp-tech-MC-UCB}, the authors highlighted for $MC-UCB$ some links between the risk and the pseudo-risk. More precisely, they established links between $L_n(\alg)$ and $\sum_{k=1}^K w_k^2 \E[(\hmu_{k,n}-\mu_k)^2]$. This step is possible since $\E[(\hmu_{k,n} - \mu_k)^2] \leq \frac{w_k^2 \si_k^2}{\underline{T}_{k,n}^2} \E[T_{k,n}]$, where $\underline{T}_{k,n}$ is a lower-bound on the number of pulls $T_{k,n}$ on a high probability event. Then they bounded the cross products $\E[(\mu_{k,n} - \mu_k)(\hmu_{p,n}- \mu_p)]$ and provided some upper bounds on those terms. A tight analysis of these terms as a function of the number of strata $K$ remains to be investigated.

\paragraph{Knowledge of the H\"older exponent.} In order to be able to choose properly the number of strata to achieve the rate in Theorem~\ref{th:tradeoff}, it is needed to possess a proper lower bound on the H\"older exponent of the function: indeed, the rougher the function is, the more strata are required. On the other hand, such a knowledge on the function is not always available and an interesting question is whether it is possible to estimate this exponent fast enough. There are interesting papers on that subject like~\citep{hoffmann2002random} where the authors tackle the problem of regression and prove that it is possible, up to a certain extent, to adapt to the unknown smoothness of the function. The authors in~\citep{gine2010confidence} add to that (in the case of density estimation) and prove that it is even possible under the assumption that the function attain its H\"older exponent to have a proper estimation of this exponent and thus adaptive confidence bands. An idea would be to try to adapt those results in the case of finite sample.

\paragraph{MC-UCB On a noiseless function.} Consider the case where $s=0$ almost surely, i.e.~the samples collected are noiseless. Proposition~\ref{lem:absconv} ensures that $\inf_{\N} \Sigma_{\N} = 0$: it is thus possible in this case to achieve a pseudo-risk that has a faster rate than $O(\frac{1}{n})$. If the function $m$ is smooth, e.g.~H\"older with a not too low exponent $\alpha$, it is efficient to use low discrepancy methods to integrate the functions. An idea is to stratify the domain in $n$ hyper-rectangular strata of minimal diameter, and to pick at random one sample per stratum. The variance of the resulting estimate is of order $O(\frac{1}{n^{1+2\alpha/d}})$. Algorithm MC-UCB is not as efficient as a low discrepancy schemes: it needs a number of strata $K < n$ in order to be able to estimate the variance of each stratum. Its pseudo-risk is then of order $O(\frac{1}{n K^{2\alpha/d}})$.
\\
It is however only true when the observations are noiseless. Otherwise, the order for the variance of the estimate is in $1/n$, no matter what strategy the learner chooses.

\paragraph{In high dimension.} The first bound in Theorem~\ref{th:tradeoff} expresses precisely how the performance of the estimate outputted by MC-UCB depends on $d$. The first bound states that the quantity  $L_n(\alg) - \frac 1n \Big(\int_{[0,1]^d} s(x) dx\Big)^2$ is negligible when compared to $1/n$ when $n$ is exponential in $d$. This is not surprising since our technique aims at stratifying equally in every direction. It is not possible to stratify in every directions of the domain if the function lies in a very high dimensional domain.
\\
This is however \textit{not} a reason for not using our algorithm in high dimension. Indeed, stratifying even in a small number of strata already reduces the variance, and in high dimension, any variance reduction techniques are welcome. As mentioned in the end of Section~\ref{s:setting}, the model that we propose for the function is suitable for modeling $d^*$ dimensional functions that we only stratify in $d<d^*$ directions (and $d \ll n$). A reasonable trade-off for $d$ can also be inferred from the bound, but we believe that what a good choice of $d$ is depends a lot of the problem. We then believe that it is a good idea to select the number of strata in the minimax way that we propose. Again, having a very high dimensional function that one stratifies in only a few directions is a very common technique in financial mathematics, for pricing options (practitioners stratify an infinite dimensional process in only 1 to 5 carefully chosen dimensions).

\section{Numerical experiment: influence of the number of strata in the Pricing of an Asian option}\label{s:experiments}


We consider the pricing problem of an Asian option introduced in \citep{glasserman1999asymptotically} and later considered in \citep{kawai2010asymptotically,A-EtoJou10}.
This uses a Black-Scholes model with strike $C$ and maturity $T$. Let $(W(t))_{0\leq t\leq T}$ be a Brownian motion. The discounted payoff of the Asian option is defined as a function of $W$, by:
\begin{equation}\label{eq:price}
\textstyle{ F((W)_{0 \leq t \leq T}) = \exp(-rT) \max \Big[ \int_0^T S_0 \exp \Big( (r - \frac{1}{2}s_0^2)t + s_0 W_t\Big) dt -C , 0 \Big],
}
\end{equation}
where $S_0$, $r$, and $s_0$ are constants, and the price is defined by the expectation $p = \E_{W} F(W)$.

We want to estimate the price $p$ by Monte-Carlo simulations (by sampling on $W$). In order to reduce the variance of the estimated price, we can stratify the space of $W$. \cite{glasserman1999asymptotically} suggest to stratify according to a one dimensional projection of $W$, i.e., by choosing a time $t$ and stratifying according to the quantiles of $W_t$ (and simulating the rest of the Brownian according to a Brownian Bridge, see~\citep{kawai2010asymptotically}). They further argue that the best direction for stratification is to choose $t=T$, i.e., to stratify according to the last time of $T$. 
This choice of stratification is also intuitive since $W_T$ has the highest variance, the biggest exponent in the payoff (\ref{eq:price}), and thus the highest volatility. \cite{kawai2010asymptotically} and \cite{A-EtoJou10} also use the same direction of stratification. We stratify according to the quantiles of $W_T$, that is to say the quantiles of a normal distribution $\mathcal N(0,T)$. When stratifying in $K$ strata, we stratify according to the $1/K$-th quantiles (so that the strata are hyper-cubes of same measure).



We choose the same numerical values as \cite{kawai2010asymptotically}: $S_0=100$, $r=0.05$, $s_0=0.30$, $T=1$ and $d=16$. We discretize also, as in~\cite{kawai2010asymptotically}, the Brownian motion in $16$ equidistant times, so that we are able to simulate it. We choose $C = 120$. 

%

In this paper, we only do experiments for MC-UCB, and exhibit the influence of the number of strata. For a comparison between MC-UCB and other algorithms, see~\citep{MC-UCB}. By studying the range of the $F(W)$, we set the parameter of the algorithm MC-UCB to $A=150 \log(n)$.

For $n=200$ and $n=2000$, we observe the influence of the number of strata in Figure~\ref{fig:diffvalC}. We observe the trade-off that we mentioned between pseudo-regret and quality, in the sense that the mean squared error of the estimate outputted by MC-UCB (when compared to the true integral of $f$) first decreases with $K$ and then increases. Note that, without surprise, for a large $n$ the minimum of mean squared error is reached with more strata. Finally, note that our technique is never outperformed by uniform stratified Monte-Carlo: it is a good idea to try to adapt.



\begin{figure*}[!hbtp]
\begin{minipage}{0.49\textwidth}
\includegraphics[height = 10.2cm, width = 7cm]{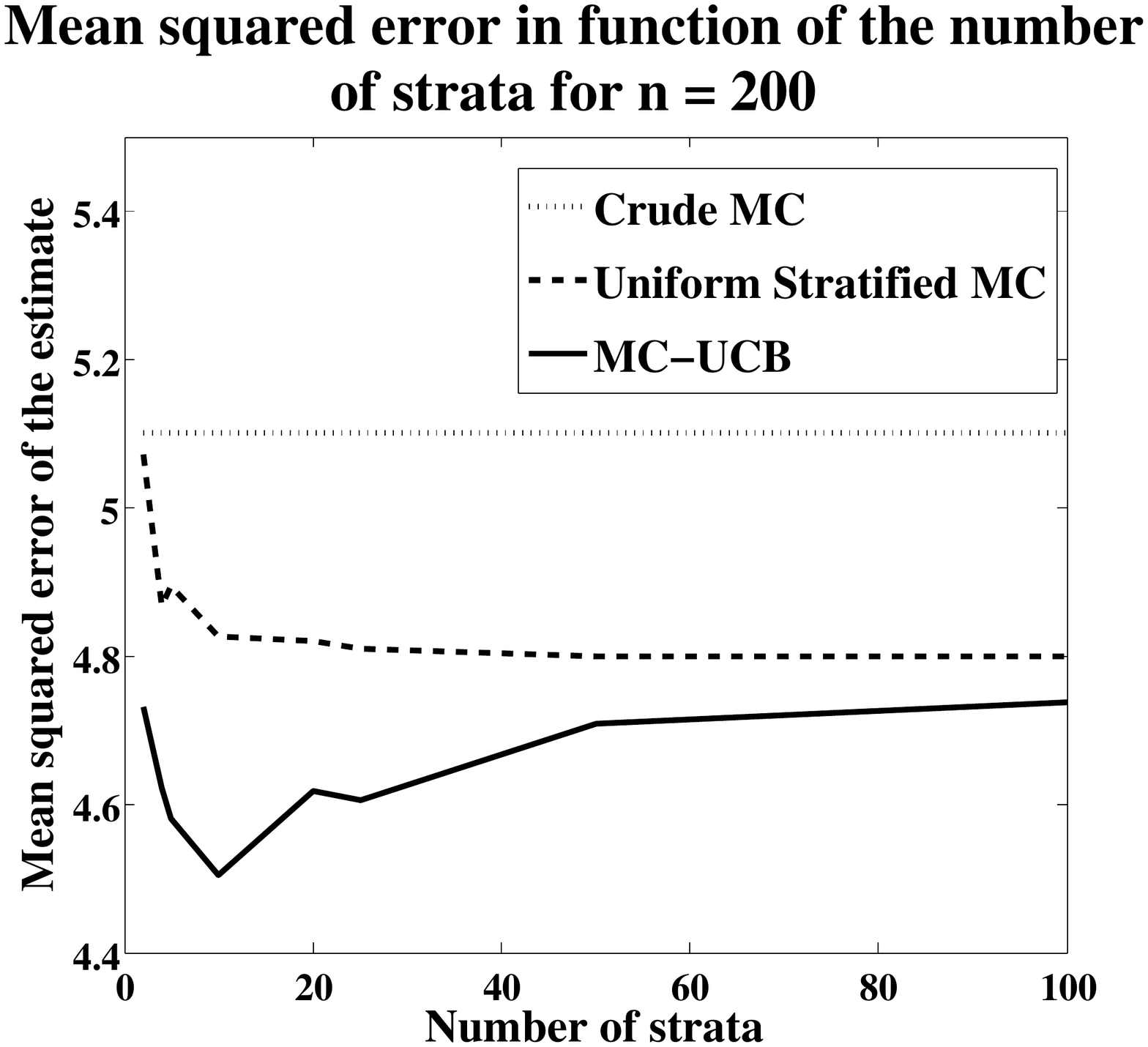}
\end{minipage}
\begin{minipage}{0.48\textwidth}
\includegraphics[width=\textwidth]{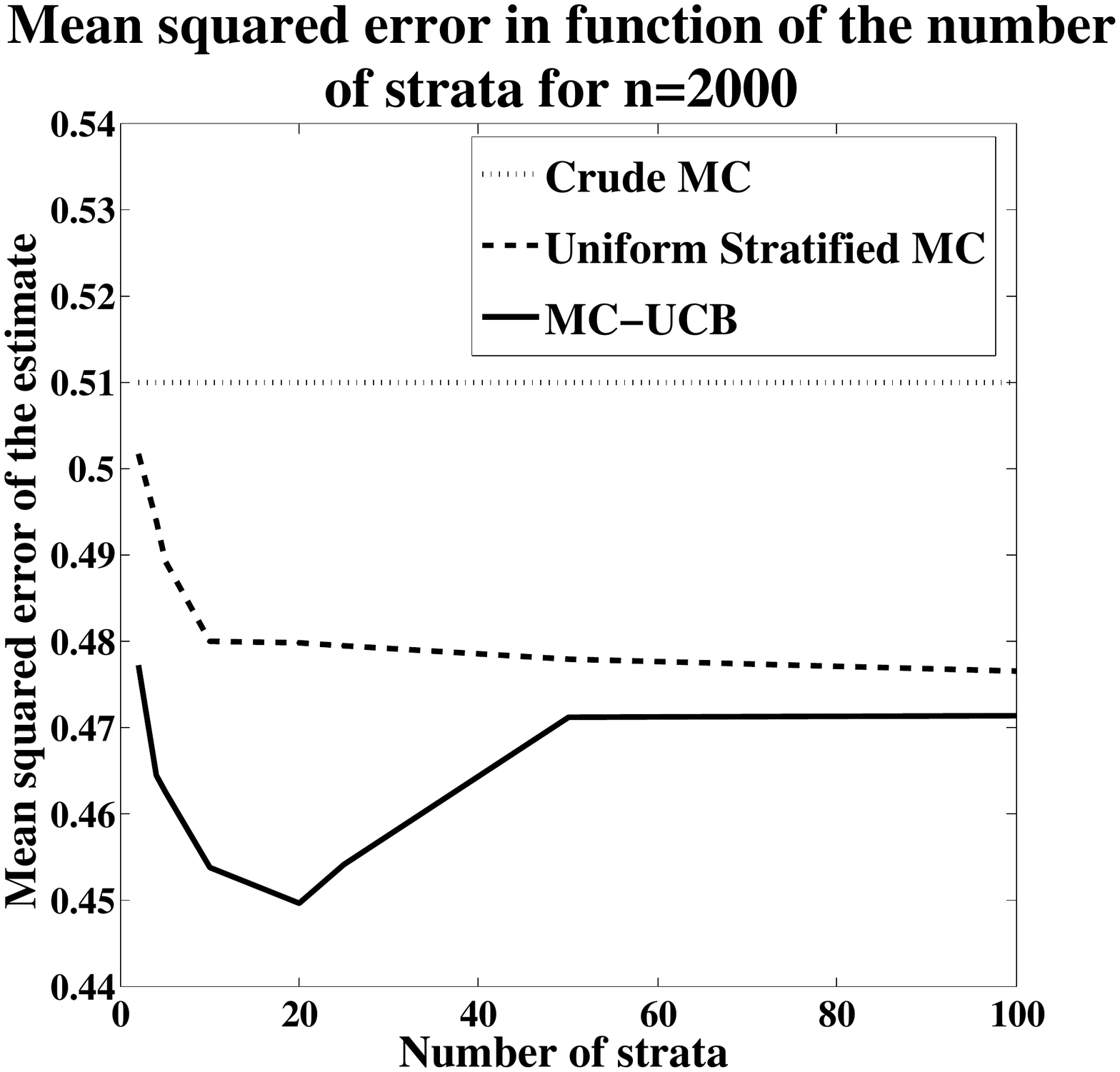}
\end{minipage}
\vspace{-1.5cm}
\caption{Mean squared error for uniform stratified sampling for different number of strata, for (Left:) n=200 and (Right:) n=2000.} \label{fig:diffvalC}
\end{figure*}

\section{Conclusion}

In this paper we studied  the problem of online stratified sampling for the numerical integration of a function given noisy evaluations, and more precisely we discussed the problem of choosing the \textit{minimax-optimal number} of strata.

We explained why, to our minds, this is a crucial problem when one wants to design an efficient algorithm. We enlightened the fact that there is a trade-off between having many strata (and a good approximation error, called the quality of a partition), and not too many, in order to perform almost as well as the optimal oracle allocation on a given partition (small estimation error, called pseudo-regret). 
  
When the function is noisy, the noise is the dominant quantity in the optimal oracle variance on the optimal oracle partition. Indeed, decreasing the size of the strata does not diminish the (local) variance of the noise. In this case, the pseudo-risk of algorithm MC-UCB is equal, up to negligible terms, to the mean squared error of the estimate outputted by the optimal oracle strategy on the best (oracle) partition, at a rate of $O(n^{-\frac{d+4\alpha}{d+3\alpha}})$ where $\alpha$ is the H\"older exponent of $s$ and $m$. This rate is minimax optimal on the class of $\alpha$-H\"older functions: it is not possible, up to a constant factor, to do better on simultaneously all $\alpha$-H\"older functions.



We believe that there are (at least) three very interesting remaining open questions:
\begin{itemize}
 \item The first one is to investigate whether it is possible to estimate online the H\"older exponent \textit{fast enough}. Indeed, one needs it in order to compute the proper number of strata for MC-UCB, and the lower bound on the H\"older exponent appears in the bound. It is thus a crucial parameter. 
\item The second direction is to build a more efficient algorithm in the noiseless case. We remarked that MC-UCB is not as efficient in this case as a simple non-adaptive method. The problem comes from the fact that in the case of a noiseless function, it is important to sample the space in a way that ensures that the points are as spread as possible. An interesting problem is thus to build an algorithm that mixes ideas from quasi Monte-Carlo and ideas from online stratified Monte-Carlo.
\item Another question is the relevance of fixing the strata in advance. Although it is minimax-optimal on the class of $\alpha-$H\"older functions to have hyper-cubic strata of same measure, it might in some cases be more interesting to focus and stratify more finely at places where the function is rough. On that perspective, it could be more clever to have an adaptive procedure that also decides \textit{where} to refine the strata. 
\end{itemize}

\newpage
\bibliography{allocation.bib}

\begin{thebibliography}{11}
\providecommand{\natexlab}[1]{#1}
\providecommand{\url}[1]{\texttt{#1}}
\expandafter\ifx\csname urlstyle\endcsname\relax
  \providecommand{\doi}[1]{doi: #1}\else
  \providecommand{\doi}{doi: \begingroup \urlstyle{rm}\Url}\fi

\bibitem[Carpentier and Munos(2011{\natexlab{a}})]{MC-UCB}
A.~Carpentier and R.~Munos.
\newblock Finite-time analysis of stratified sampling for monte carlo.
\newblock In \emph{In Neural Information Processing Systems (NIPS)},
  2011{\natexlab{a}}.

\bibitem[Carpentier and Munos(2011{\natexlab{b}})]{rapp-tech-MC-UCB}
A.~Carpentier and R.~Munos.
\newblock Finite-time analysis of stratified sampling for monte carlo.
\newblock Technical report, INRIA-00636924, 2011{\natexlab{b}}.

\bibitem[Etor\'e and Jourdain(2010)]{A-EtoJou10}
Pierre Etor\'e and Benjamin Jourdain.
\newblock Adaptive optimal allocation in stratified sampling methods.
\newblock \emph{Methodol. Comput. Appl. Probab.}, 12\penalty0 (3):\penalty0
  335--360, September 2010.

\bibitem[Etor\'e et~al.(2011)Etor\'e, Fort, Jourdain, and
  Moulines]{A-EtoForJouMou11}
Pierre Etor\'e, Gersende Fort, Benjamin Jourdain, and \'Eric Moulines.
\newblock On adaptive stratification.
\newblock \emph{Ann. Oper. Res.}, 2011.
\newblock to appear.

\bibitem[Gin{\'e} and Nickl(2010)]{gine2010confidence}
E.~Gin{\'e} and R.~Nickl.
\newblock Confidence bands in density estimation.
\newblock \emph{The Annals of Statistics}, 38\penalty0 (2):\penalty0
  1122--1170, 2010.

\bibitem[Glasserman(2004)]{glasserman2004monte}
P.~Glasserman.
\newblock \emph{{Monte Carlo methods in financial engineering}}.
\newblock Springer Verlag, 2004.
\newblock ISBN 0387004513.

\bibitem[Glasserman et~al.(1999)Glasserman, Heidelberger, and
  Shahabuddin]{glasserman1999asymptotically}
P.~Glasserman, P.~Heidelberger, and P.~Shahabuddin.
\newblock Asymptotically optimal importance sampling and stratification for
  pricing path-dependent options.
\newblock \emph{Mathematical Finance}, 9\penalty0 (2):\penalty0 117--152, 1999.

\bibitem[Grover(2009)]{Grover}
V.~Grover.
\newblock Active learning and its application to heteroscedastic problems.
\newblock \emph{Department of Computing Science, Univ. of Alberta, MSc thesis},
  2009.

\bibitem[Hoffmann and Lepski(2002)]{hoffmann2002random}
M.~Hoffmann and O.~Lepski.
\newblock Random rates in anisotropic regression.
\newblock \emph{Annals of statistics}, pages 325--358, 2002.

\bibitem[Kawai(2010)]{kawai2010asymptotically}
R.~Kawai.
\newblock {Asymptotically optimal allocation of stratified sampling with
  adaptive variance reduction by strata}.
\newblock \emph{ACM Transactions on Modeling and Computer Simulation (TOMACS)},
  20\penalty0 (2):\penalty0 1--17, 2010.
\newblock ISSN 1049-3301.

\bibitem[Rubinstein and Kroese(2008)]{rubinstein2008simulation}
R.Y. Rubinstein and D.P. Kroese.
\newblock \emph{{Simulation and the Monte Carlo method}}.
\newblock Wiley-interscience, 2008.
\newblock ISBN 0470177942.

\end{thebibliography}

\newpage
\appendix

\section{Proof of Theorem~\ref{thm:m-regret}}\label{s:m-results}

\subsection{The main tool: a high probability bound on the standard deviations}

\paragraph{Upper bound on the standard deviation:} 

\begin{lemma}\label{l:event-B-AS}
Let Assumption~\ref{ass:distr2} hold and $n\geq 2$. Define the following event
%

\begin{equation}\label{eq:ucb-maurer-event-sub1}
\xi = \xi_{K,n}(\delta) = \bigcap_{1\leq k\leq K,\;2\leq t\leq n}\left\lbrace \Bigg|\sqrt{\frac{1}{t-1}\sum_{i=1}^t\Big(X_{k,i} - \frac{1}{t}\sum_{j=1}^t X_{k,j}\Big)^2} - \si_k\Bigg| \leq A\sqrt{\frac{1}{t}} \right\rbrace,
\end{equation}
%
where $A = 2\sqrt{(1 + 3b + 4\bar{V})\log(2 nK/\delta)}$. Then $\Pr(\xi)\geq 1-\delta$.

\end{lemma}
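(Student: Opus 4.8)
\textbf{Reduction.} The plan is to first reduce to a single stratum and a single time. Since $\xi$ is an intersection of $K(n-1)$ events, a union bound reduces the claim to showing that, for each fixed $k$ and each fixed $t\in\{2,\dots,n\}$, the complementary event has probability at most $\delta/(nK)$. Fix $k,t$ and write $S_{k,t}:=\sqrt{\tfrac1{t-1}\sum_{i=1}^t(X_{k,i}-\bar X_{k,t})^2}$, $\bar X_{k,t}:=\tfrac1t\sum_{i=1}^t X_{k,i}$. Although the allocation is adaptive, the successive samples ever drawn in $\Omega_k$ can be coupled with a fixed i.i.d.\ tape $(X_{k,i})_{i\ge1}$ of draws of $f(x)+s(x)\no$ with $x\sim\mathrm{Unif}(\Omega_k)$; the stopping times merely determine which prefix of the tape is revealed, so $S_{k,t}$ is a fixed function of $t$ i.i.d.\ samples and no measurability issue survives. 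Writing $Y_i:=X_{k,i}-\mu_k$ (so $\E Y_i=0$, $\E Y_i^2=\si_k^2$), I would then condition on $x_i$ and integrate over $x_i\sim\mathrm{Unif}(\Omega_k)$ to obtain, from Assumption~\ref{ass:distr2} and $|f|,|s|\le f_{\max}$, Bernstein/sub-gamma exponential-moment bounds for $Y_i$ and for $Y_i^2-\si_k^2$ with variance proxy $O(\bar V)$ and scale $O(b)$; it is this bookkeeping that eventually produces the combination $1+3b+4\bar V$ inside $A$.

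\textbf{Geometric reduction.} Next, with $Y=(Y_1,\dots,Y_t)$, I would peel off the empirical centering using that the empirical mean is the Euclidean-closest constant to the data, so that $\sqrt{t-1}\,S_{k,t}=\|Y-\bar Y\mathbf 1\|_2$ and hence
\[
\bigl|\,\sqrt{t-1}\,S_{k,t}-\|Y\|_2\,\bigr|\ \le\ \|\bar Y\,\mathbf 1\|_2\ =\ \sqrt t\,|\bar Y|;
\]
the right-hand side is at most $\sqrt{2\bar V\log(nK/\delta)}+O\!\bigl(b\log(nK/\delta)\bigr)$ with probability $\ge1-\delta/(3nK)$ by Bernstein applied to the i.i.d.\ sum $\sum_iY_i$. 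So the remaining task is to compare $\|Y\|_2$ with $\sqrt t\,\si_k$.

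\textbf{Concentration of $\|Y\|_2$.} This is the heart of the argument. The map $y\mapsto\|y\|_2$ is convex and $1$-Lipschitz on $\R^t$, so $\|Y\|_2$ is a convex $1$-Lipschitz function of the \emph{independent} coordinates $Y_1,\dots,Y_t$; a Talagrand-type concentration inequality for such functions (or, equivalently, a direct tensorized m.g.f.\ computation fed by the exponential moments above) gives $\bigl|\,\|Y\|_2-\E\|Y\|_2\,\bigr|$ of order $\sqrt{(1+b+\bar V)\log(nK/\delta)}$, uniformly in $t$, with probability $\ge1-\delta/(3nK)$, the precise constants $1+3b+4\bar V$ coming from the careful sub-gamma accounting. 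Moreover $\E\|Y\|_2\le\sqrt{\E\|Y\|_2^2}=\sqrt t\,\si_k$ by Jensen, while $(\E\|Y\|_2)^2=\E\|Y\|_2^2-\mathrm{Var}(\|Y\|_2)\ge t\si_k^2-O(\bar V)$ since the same concentration statement controls $\mathrm{Var}(\|Y\|_2)$ by $O(\bar V)$; hence $\bigl|\,\E\|Y\|_2-\sqrt t\,\si_k\,\bigr|\le O(\sqrt{\bar V})$. Assembling the three bounds, on an event of probability $\ge1-\delta/(nK)$ one gets $\bigl|\,\sqrt{t-1}\,S_{k,t}-\sqrt{t-1}\,\si_k\,\bigr|\le c\sqrt{(1+3b+4\bar V)\log(nK/\delta)}$; dividing by $\sqrt{t-1}$, using $\sqrt{t-1}\ge\sqrt{t/2}$ and $\si_k\bigl(\sqrt{t/(t-1)}-1\bigr)\le\sqrt{\bar V}/(2(t-1))$ for $t\ge2$, and tuning the absolute constants down, yields $|S_{k,t}-\si_k|\le A\sqrt{1/t}$ with $A=2\sqrt{(1+3b+4\bar V)\log(2nK/\delta)}$. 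A union bound over the $K(n-1)$ pairs $(k,t)$ then gives $\Pr(\xi)\ge1-\delta$.

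\textbf{Main obstacle.} I expect the crux to be the concentration of the convex $1$-Lipschitz function $\|Y\|_2$ of \emph{unbounded} (sub-gamma, not bounded) coordinates, and the care needed in the constants to land exactly on the stated $A$. Exploiting the Euclidean/convexity structure seems unavoidable: controlling $|S_{k,t}^2-\si_k^2|$ by Bernstein and then passing to $S_{k,t}$ via $|S_{k,t}-\si_k|\le|S_{k,t}^2-\si_k^2|/(S_{k,t}+\si_k)$ would require $\si_k$ bounded away from $0$, via $|S_{k,t}-\si_k|\le\sqrt{|S_{k,t}^2-\si_k^2|}$ degrades the rate to $t^{-1/4}$, and bounding $\|Y\|_2$ through $\sum_iY_i^2$ term by term throws away the centering; none of these delivers the $t^{-1/2}$ rate uniformly in $\si_k$, which is precisely what the norm representation plus convex concentration buys.
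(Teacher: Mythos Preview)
Your outer shell (reduction to fixed $(k,t)$ via the i.i.d.\ tape, then a union bound over $K(n-1)$ pairs) matches the paper. The core concentration argument, however, is quite different, and the reason you give for avoiding the elementary route is mistaken.

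The paper does \emph{not} use convex Lipschitz concentration of $\|Y\|_2$. It applies Bernstein directly to $\sum_i Y_i$ and to $\sum_i\bigl((X_{k,i}-\mu_k)^2-\si_k^2\bigr)$; the second half of Assumption~\ref{ass:distr2} (the exponential-moment bound on $\epsilon_t^2-1$) is used precisely here. This yields
\[
|\hat V-V|\ \le\ \sqrt{\tfrac{2V\log(2/\delta)}{t}}+\tfrac{(3b+4V)\log(2/\delta)}{t},
\]
with $\hat V=S_{k,t}^2$ and $V=\si_k^2$. The passage from squares to square roots is then done by \emph{completing the square in $\sqrt V$}: from $\hat V\ge V-\sqrt{2V\log(2/\delta)/t}-c/t$ one rewrites the right side as $\bigl(\sqrt V-\sqrt{\log(2/\delta)/(2t)}\bigr)^2-c'/t$, takes a square root, and uses $\sqrt{a+b}\le\sqrt a+\sqrt b$. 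This gives $|\sqrt{\hat V}-\sqrt V|\le 2\sqrt{(1+3b+4V)\log(2/\delta)/t}$ \emph{uniformly in $\si_k$, including $\si_k\to 0$}. So your claim that going through $|S^2-\si^2|$ either ``requires $\si_k$ bounded away from $0$'' or ``degrades the rate to $t^{-1/4}$'' is wrong: the crucial point you missed is that the Bernstein variance term already carries a $\sqrt V$ factor (inherited from the variance proxy in Assumption~\ref{ass:distr2}), and this is exactly what makes the completing-the-square algebra land on $t^{-1/2}$ without ever dividing by $\si_k$.

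Your own route via Talagrand-type concentration of $y\mapsto\|y\|_2$ might be salvageable, but as written it has a gap at the step you yourself flag as the ``main obstacle'': Talagrand's convex-distance inequality and most off-the-shelf Lipschitz-concentration results are stated for \emph{bounded} coordinates, whereas the $Y_i$ here are only sub-gamma. ``A direct tensorized m.g.f.\ computation'' is an aspiration, not an argument, and matching the exact constant $2\sqrt{1+3b+4\bar V}$ from that direction is doubtful. The paper's completing-the-square approach is both more elementary and delivers the stated $A$ directly.
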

Note that the first term in the absolute value in Equation~\ref{eq:ucb-maurer-event-sub1} is the empirical standard deviation of arm $k$ computed as in Equation~\ref{eq:estim-var2} for $t$ samples. The event $\xi$ plays an important role in the proofs of this section and a number of statements will be proved on this event.

\begin{proof}
Under Assumption~\ref{ass:distr2} we have for $f_{\max}^2 \geq \max_k \si_k^2$ with probability $1-\delta$ because of the results of Lemma~\ref{ss:variance}
\begin{equation}
\Bigg|\sqrt{\frac{1}{t-1}\sum_{i=1}^t\Big(X_{k,i} - \frac{1}{t}\sum_{j=1}^t X_{k,j}\Big)^2} - \si_k\Bigg| \leq  2\sqrt{\frac{(1 + 3b + 4f_{\max}^2)\log(2/\delta)}{t}}. 
\end{equation}
Then by doing a simple union bound on $(k,t)$, we obtain the result.

\end{proof}
We deduce the following corollary when the number of samples $T_{k,t}$ are random.

\begin{corollary}\label{l:bernstein-var-sub}
For any $k=1,\ldots,K$ and $t=2K,\dots,n$, let $\{X_{k,i}\}_i$ be $n$ i.i.d.~random variables drawn from $\nu_k$, satisfying Assumption~\ref{ass:distr2}. Let $T_{k,t}$ be any random variable taking values in $\{2,\dots,n\}$. Let $\hvar_{k,t}$ be the empirical variance computed from Equation~\ref{eq:estim-var2}. Then, on the event $\xi$, we have:
\begin{equation}\label{eq:ucb-maurer-event-sub2}
|\hsi_{k,t} - \si_k| \leq A\sqrt{\frac{1}{T_{k,t}}}\;,
\end{equation}
where $A=2\sqrt{(1 + 3b + 4\bar{V})\log(2nK/\delta)}$.
\end{corollary}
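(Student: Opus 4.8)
The plan is to obtain Corollary~\ref{l:bernstein-var-sub} as an essentially immediate consequence of Lemma~\ref{l:event-B-AS}, the only point requiring care being that $T_{k,t}$ is data-dependent. The key observation is that the event $\xi$ defined in~\eqref{eq:ucb-maurer-event-sub1} is, by construction, an \emph{intersection over all} arms $k\in\{1,\dots,K\}$ and over all \emph{deterministic} sample sizes $t\in\{2,\dots,n\}$. Hence, on $\xi$, for every arm $k$ and every fixed integer $\tau\in\{2,\dots,n\}$ the empirical standard deviation $\hsi_k^{(\tau)}$ computed from the first $\tau$ samples of arm $k$ simultaneously satisfies $|\hsi_k^{(\tau)}-\si_k|\leq A\sqrt{1/\tau}$. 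So all the bounds we could ever need, for any possible realised number of pulls, are already in force on $\xi$.

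Next I would exploit that $T_{k,t}$ takes values in the \emph{finite} set $\{2,\dots,n\}$: writing $\xi=\bigcup_{\tau=2}^{n}\bigl(\xi\cap\{T_{k,t}=\tau\}\bigr)$, it suffices to verify the claimed inequality on each piece. On $\{T_{k,t}=\tau\}$ the estimator $\hsi_{k,t}$ of Equation~\ref{eq:estim-var2} is exactly the empirical standard deviation built from the first $\tau$ samples $X_{k,1},\dots,X_{k,\tau}$, hence by the previous paragraph $|\hsi_{k,t}-\si_k|=|\hsi_k^{(\tau)}-\si_k|\leq A\sqrt{1/\tau}=A\sqrt{1/T_{k,t}}$. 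Since this holds on every element of this finite decomposition of $\xi$, it holds on all of $\xi$, which is precisely the corollary. I would stress that no martingale or stopping-time argument is needed here: the required uniformity over the (random) number of pulls is already purchased by the union bound over $t$ built into $\xi$, which is exactly what contributes the $\log(2nK/\delta)$ factor in $A$ rather than some weaker iterated-logarithm factor.

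The only genuine obstacle is a bookkeeping one, and it is minor: $\xi$ is stated with the Bessel-corrected normalisation $\frac{1}{t-1}\sum_{i=1}^{t}(\cdot)^2$, whereas Equation~\ref{eq:estim-var2} normalises by $T_{k,t}$. This is harmless, since for $m\geq 2$ one has $\frac{1}{m}\sum_{i=1}^{m}(X_i-\bar X)^2=\frac{m-1}{m}\cdot\frac{1}{m-1}\sum_{i=1}^{m}(X_i-\bar X)^2$, so the two empirical standard deviations differ only by a factor $\sqrt{(m-1)/m}\in[1/\sqrt2,1)$; the upper deviation is then immediate, and the lower deviation picks up at most an extra $\si_k/m \leq f_{\max}/\sqrt{m}$ term which is absorbed into the numerical constant in $A$ (alternatively, one simply re-states $\xi$ with the $\frac1t$ normalisation, which changes nothing in the argument). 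I would note this equivalence once and then invoke the two-line decomposition argument above.
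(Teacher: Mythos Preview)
Your proposal is correct and matches the paper's approach: the paper gives no proof at all for this corollary, simply stating it as an immediate consequence of Lemma~\ref{l:event-B-AS} once the union bound over all $(k,t)$ has been taken. Your decomposition of $\xi$ according to the realised value of $T_{k,t}$ is exactly the standard one-line justification the paper leaves implicit, and your remark about the $\tfrac{1}{t-1}$ versus $\tfrac{1}{t}$ normalisation correctly flags a minor inconsistency in the paper itself rather than a gap in the argument.
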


%
%
%
%
%
%
%
%
%

\subsection{Main Demonstration}

We first state and prove the following Lemma and then use this result to prove Theorem~\ref{thm:m-regret}.
\begin{theorem}\label{thm:m-regret}
Let Assumption~\ref{ass:distr2} hold. For any $0<\delta \leq 1$ and for $n \geq 4K$, the algorithm MC-UCB launched on a partition $\N_K$ satisfies
\begin{align*}
\E L_n \leq \frac{\Sigma_{\N_K}^2}{n} + 24\sqrt{2} \Sigma_{\N_K}\sqrt{(1 + 3b + 4 f_{\max}^2)}\Big(\frac{f_{\max} +  4}{4}\Big)^{1/3} \frac{K^{1/3}}{n^{4/3}}\sqrt{\log(nK)} + \frac{14K\Sigma_{\N_K}^2}{n^2}.
\end{align*}
\end{theorem}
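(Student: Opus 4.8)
The plan is to control $\E L_n = \E\big[\sum_k w_k^2 \si_k^2 / T_{k,n}\big]$ by showing that, on the high-probability event $\xi$ of Lemma~\ref{l:event-B-AS}, the MC-UCB allocation $T_{k,n}$ tracks the oracle allocation $T_k^* = \lambda_k n$ up to a lower-order correction, and then handling the complementary event $\xi^c$ (of probability at most $\delta$) separately. First I would work on $\xi$ and derive a lower bound on $T_{k,n}$: the UCB rule forces the algorithm to keep pulling any under-sampled stratum, so a standard argument (as in~\citep{MC-UCB}) gives $T_{k,n} \geq \lambda_k n - c_k$ for an explicit slack $c_k$ that scales like $A K^{1/3} n^{1/3} w_k^{1/3} / \Sigma_{\N_K}^{1/3}$ (this is where the refined bound over the crude $O(K)$ analysis of~\citep{MC-UCB} comes from — the slack is distributed across strata in a way that, once summed with the weights $w_k^2\si_k^2$, produces $K^{1/3}$ rather than $K$). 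I would also use the trivial bound $T_{k,n}\geq 2$ from the initialization phase to make the reciprocal well-defined everywhere. Plugging the lower bound into $w_k^2\si_k^2/T_{k,n} \leq w_k^2\si_k^2/(\lambda_k n) \cdot (1 - c_k/(\lambda_k n))^{-1}$ and expanding $(1-x)^{-1}\leq 1 + 2x$ for $x\leq 1/2$ (valid once $n\geq 4K$) yields $\sum_k w_k^2\si_k^2/T_{k,n} \leq \Sigma_{\N_K}^2/n + (\text{sum of correction terms})$.

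The main computation is then to bound $\sum_k w_k^2\si_k^2 \, c_k / (\lambda_k n)^2$. Using $\lambda_k = w_k\si_k/\Sigma_{\N_K}$, the $k$-th term simplifies to something proportional to $w_k\si_k \Sigma_{\N_K}^2 c_k / (n^2 w_k\si_k) \cdot(\cdots)$; after substituting the form of $c_k$ and invoking $\sum_k w_k\si_k = \Sigma_{\N_K}$ together with $\si_k \leq \sqrt 2 f_{\max}$ and $\sum_k w_k = 1$ — plus a Hölder/Jensen step to convert $\sum_k w_k^{1/3}$ (or the relevant power sum) into a factor $K^{2/3}$ — one should land on the announced $24\sqrt 2\,\Sigma_{\N_K}\sqrt{1+3b+4f_{\max}^2}\big((f_{\max}+4)/4\big)^{1/3} K^{1/3} n^{-4/3}\sqrt{\log(nK)}$. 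I would pick $\delta$ of order $1/n$ (so that $A = 2\sqrt{(1+3b+4f_{\max}^2)\log(2nK/\delta)} \asymp \sqrt{\log(nK)}$, matching the stated constant) and absorb the resulting $\log$ into the square-root factor. On $\xi^c$, since each $T_{k,n}\geq 2$ and $w_k^2\si_k^2$ is bounded, $L_n \leq \tfrac12\sum_k w_k^2\si_k^2 \leq \tfrac12\Sigma_{\N_K}^2$ crudely (or a sharper $\sum_k w_k^2\si_k^2 \le (\sum_k w_k \si_k)^2 = \Sigma_{\N_K}^2$ via rearrangement); multiplying by $\Pr(\xi^c)\leq\delta\asymp 1/n$ and being careful with constants gives the residual $14 K\Sigma_{\N_K}^2/n^2$ term — here one actually needs the $\xi^c$ contribution combined with the $T_{k,n}\geq 2$ worst-case bound on $\xi$ for those strata where the UCB lower bound is vacuous, which is what introduces the extra factor $K$.

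The step I expect to be the main obstacle is establishing the sharp per-stratum lower bound $T_{k,n}\geq \lambda_k n - c_k$ with the \emph{right} dependence of $c_k$ on $K$ and $w_k$. In~\citep{MC-UCB} the analysis is loose enough to cost a full factor $K$; tightening it requires arguing more carefully about the global budget constraint $\sum_k T_{k,n} = n$ — namely, that the total over-pulling (summed over strata) is itself bounded, so the deficits $c_k$ cannot all be simultaneously large, and distributing this total deficit optimally against the weights $w_k^2\si_k^2$ is what yields the power-sum $\sum_k w_k^{1/3}\le K^{2/3}$ improvement. Everything after that — the $(1-x)^{-1}$ expansion, the Hölder step, choosing $\delta$, and the $\xi^c$ bookkeeping — is routine, though the constants must be tracked attentively to reproduce the exact numbers in the statement. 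Finally, Proposition~\ref{prop:m-regret} follows immediately from Theorem~\ref{thm:m-regret} by subtracting $\Sigma_{\N_K}^2/n = L_{n,\N_K}(\alg^*)$ from both sides and recalling the definition $R_{n,\N_K}(\alg) = L_{n,\N_K}(\alg) - L_{n,\N_K}(\alg^*)$.
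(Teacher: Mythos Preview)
Your route is genuinely different from the paper's, and as written it has two concrete gaps.

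The paper does \emph{not} try to show $T_{k,n} \geq \lambda_k n - c_k$. Instead it runs a two-stage argument. Stage~1: pick any arm $k$ with $T_{k,n}\geq n/K$ (pigeonhole), look at the last time $t$ it was pulled, and combine $B_{q,t}\leq B_{k,t}$ with the crude lower bound $B_{q,t}\geq 2Aw_q/T_{q,n}^{3/2}$ to get a \emph{uniform} floor $T_{q,n}\geq C(n/K)^{2/3}$ for every $q$, with $C=(2A/(\max_k\si_k+2A))^{2/3}$. Stage~2: for each $p$, and each $q$ pulled after initialization, the UCB rule at the last pull of $q$ gives
\[
\frac{w_p\si_p}{T_{p,n}}(T_{q,n}-1)\;\leq\; w_q\Big(\si_q + 2A\sqrt{\tfrac{1}{T_{q,n}-1}}\Big);
\]
summing over $q$ and plugging the Stage-1 floor into the square root yields $\frac{w_p\si_p}{T_{p,n}}(n-3K)\leq \Sigma_{\N_K} + \frac{2\sqrt 2 A}{\sqrt C}\,K^{1/3}n^{-1/3}$. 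The $K^{1/3}$ thus comes from $\sqrt{1/T_{q,n}}\leq \sqrt{K^{2/3}/(Cn^{2/3})}$ summed against $\sum_q w_q=1$ --- not from a H\"older step on $\sum_k w_k^{1/3}$. Multiplying by $w_p\si_p$ and summing gives the bound on $\xi$.

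Your sketch breaks at two places. First, the expansion $(1-c_k/(\lambda_k n))^{-1}\leq 1+2c_k/(\lambda_k n)$ needs $c_k\leq\tfrac12\lambda_k n$, but $\lambda_k=w_k\si_k/\Sigma_{\N_K}$ can be arbitrarily small (even zero) when $\si_k$ is small; this is exactly the regime the paper's detour through the uniform $(n/K)^{2/3}$ floor is designed to handle, and tracking $\lambda_k n$ directly does not obviously survive it. Second, your choice $\delta\asymp 1/n$ is too coarse: on $\xi^c$ you only have $L_n\leq \tfrac12\sum_k w_k^2\si_k^2$, so $\E[L_n\ind{\xi^c}]=O(\Sigma_{\N_K}^2/n)$, the same order as the main term. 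The paper takes $\delta=n^{-2}$ (whence $A\leq 6\sqrt{(1+3b+4f_{\max}^2)\log(nK)}$), which pushes the $\xi^c$ contribution to $O(\Sigma_{\N_K}^2/n^2)$. Relatedly, the factor $K$ in the $14K\Sigma_{\N_K}^2/n^2$ term does not come from the $\xi^c$ event as you claim; it arises from expanding $1/(n-3K)\leq 1/n + 12K/n^2$ (valid for $n\geq 4K$) in Stage~2.
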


\begin{proof}

\noindent
{\bf Step~1.~Lower bound of order $\widetilde O(n^{2/3})$.} Let $k$ be the index of an arm such that $T_{k,n} \geq \frac{n}{K}$ (this implies $T_{k,n} \geq 3$ as $n\geq 4K$, and arm $k$ is thus pulled after the initialization) and let $t+1\leq n$ be the last time at which it was pulled~\footnote{Note that such an arm always exists for any possible allocation strategy given the constraint $n=\sum_{q}T_{q,n}$.}, i.e.,~$T_{k,t}=T_{k,n}-1$ and $T_{k,t+1}=T_{k,n}$. From Equation \ref{eq:ucb-maurer-event-sub2} and the fact that $T_{k,n} \geq \frac{n}{K}$, we obtain on $\xi$
\begin{equation}\label{e:meca.lb}
B_{k,t} \leq \frac{w_k}{T_{k,t}}\Bigg(\si_{k} + 2A \sqrt{\frac{1}{T_{k,t}}} \Bigg) \leq \frac{K w_k\Big(\sigma_k + 2A\Big)} {n},
\end{equation}
where the second inequality follows from the facts that $T_{k,t}\geq 1$, $w_k \si_k \leq \Sigma_{\N_K}$, and $w_k \leq \sum_k w_k = 1$. Since at time $t+1$ the arm $k$ has been pulled, then for any arm $q$, we have
\begin{equation}\label{e:meca}
B_{q,t} \leq B_{k,t}.
\end{equation}
From the definition of $B_{q,t}$, and also using the fact that $T_{q,t} \leq T_{q,n}$, we deduce on $\xi$ that
\begin{equation}\label{e:meca.hb}
B_{q,t} \geq \frac{2 A w_q}{T_{q,t}^{3/2}} \geq \frac{2A w_q}{T_{q,n}^{3/2}}\;.
\end{equation}
Combining Equations~\ref{e:meca.lb}--\ref{e:meca.hb}, we obtain on $\xi$
\begin{equation*}
\frac{2A w_q}{T_{q,n}^{3/2}} \leq  \frac{K w_k\Big(\sigma_k + 2A\Big)} {n}.
\end{equation*}
Finally, this implies on $\xi$ that for any $q$ because $w_k = w_q$,
\begin{equation}\label{eq:lb.vloose}
T_{q,n} \geq \Big(\frac{2A}{\si_k + 2A}\frac{n}{K}\Big)^{2/3}.
\end{equation}

This implies that $\forall q, T_{q,n} \geq C \Big(\frac{n}{K}\Big)^{2/3}$ where $C = \Big(\frac{2A}{\max_k \si_k + 2A}\Big)^{2/3}$.

\noindent
{\bf Step~2.~Properties of the algorithm.} We first remind the definition of $B_{q,t+1}$ used in the MC-UCB algorithm
\begin{equation*}
B_{q,t+1} = \frac{w_q}{T_{q,t}} \Bigg( \hsi_{q,t} + A\sqrt{\frac{1}{T_{q,t}}}\Bigg).
\end{equation*}

Using Corollary~\ref{l:bernstein-var-sub} it follows that, on $\xi$
\begin{equation}\label{e:bound.B}
 \frac{w_q\si_q}{T_{q,t}} \leq B_{q,t+1} \leq \frac{w_q}{T_{q,t}}\Bigg(\si_{q} + 2A \sqrt{\frac{1}{T_{q,t}}} \Bigg).
\end{equation}
Let $t+1 \geq 2K+1$ be the time at which an arm $q$ is pulled for the last time, that is $T_{q,t} = T_{q,n}-1$. Note that there is at least one arm such that this happens as $n \geq 4K$. Since at $t+1$ arm $q$ is chosen, then for any other arm $p$, we have
\begin{equation}\label{e:meca2}
B_{p,t+1} \leq B_{q,t+1}\;.
\end{equation}
From Equation~\ref{e:bound.B} and $T_{q,t} = T_{q,n} -1$, we obtain on $\xi$
\begin{equation}\label{e:meca.lb2}
B_{q,t+1} \leq \frac{w_q}{T_{q,t}}\Bigg(\si_{q} + 2A \sqrt{\frac{1}{T_{q,t}}} \Bigg) = \frac{w_q}{T_{q,n}-1}\Bigg(\si_{q} + 2A \sqrt{\frac{1}{T_{q,n}-1}} \Bigg).
\end{equation}
Furthermore, since $T_{p,t} \leq T_{p,n}$, then on $\xi$
\begin{equation}\label{e:meca.hb2}
B_{p,t+1} \geq \frac{w_p\si_{p}}{T_{p,t}} \geq \frac{w_p\si_{p}}{T_{p,n}}.
\end{equation}
Combining Equations~\ref{e:meca2}--\ref{e:meca.hb2}, we obtain on $\xi$
\begin{equation*}
\frac{w_p\si_{p}}{T_{p,n}}(T_{q,n}-1) \leq w_q\Bigg(\si_{q} + 2A \sqrt{\frac{1}{T_{q,n}-1}} \Bigg).
\end{equation*}
Summing over all $q$ such that the previous Equation is verified, i.e. such that $T_{q,n} \geq 3$, on both sides, we obtain on $\xi$

\begin{align*}
\frac{w_p\si_{p}}{T_{p,n}}\sum_{q|T_{q,n} \geq 3}(T_{q,n}-1) \leq \sum_{q|T_{q,n} \geq 3}w_q \Bigg(\si_{q} + 2A \sqrt{\frac{1}{T_{q,n}-1}} \Bigg).
\end{align*}

This implies

\begin{equation}\label{eq:main.lb}
\frac{w_p\si_{p}}{T_{p,n}}(n - 3K) \leq \sum_{q=1}^K w_q\Bigg(\si_{q} + 2A \sqrt{\frac{1}{T_{q,n}-1}} \Bigg).
\end{equation}

\noindent
{\bf Step~3.~Lower bound.} Plugging Equation~\ref{eq:lb.vloose} in Equation~\ref{eq:main.lb},
\begin{align*}
\frac{w_p\si_{p}}{T_{p,n}}(n - 3K) &\leq \sum_q w_q\Bigg(\si_{q} + 2A \sqrt{\frac{1}{T_{q,n} -1}} \Bigg)\\
&\leq \sum_q w_q\Bigg(\si_{q} + 2A \sqrt{\frac{2K^{2/3}}{Cn^{2/3}}} \Bigg)\\
&\leq \Sigma_{\N_K} + \frac{2\sqrt{2}A}{\sqrt{C}} \frac{K^{1/3}}{n^{1/3}},
\end{align*}
on $\xi$, since $T_{q,n} -1 \geq \frac{T_{q,n}}{2}$ (as $T_{q,n}\geq 2$).
Finally as $n \geq 4K$, we obtain on $\xi$ the following bound
\begin{equation}\label{eq:almost-lower-bound}
\frac{w_p\si_{p}}{T_{p,n}} \leq \frac{\Sigma_{\N_K}}{n} + \frac{4\sqrt{2}A}{\sqrt{C}} \frac{K^{1/3}}{n^{4/3}} + \frac{12K\Sigma_{\N_K}}{n^2}.
\end{equation}

\noindent
{\bf Step~4.~Regret.} By summing and using Equation~\ref{eq:almost-lower-bound} which holds for all $p$, we obtain on $\xi$ (with probability $1-\delta$)
\begin{equation*}
L_n = \sum_p \frac{w_p^2\si_{p}^2}{T_{p,n}} \leq \frac{\Sigma_{\N_K}^2}{n} + \frac{4 \Sigma_{\N_K}\sqrt{2}A}{\sqrt{C}} \frac{K^{1/3}}{n^{4/3}} + \frac{12K\Sigma_{\N_K}^2}{n^2}.
\end{equation*}
This implies since $\E L_n = \E [L_n\ind{\xi}] + \E [L_n\ind{\xi^c}]$ and since $\delta = n^{-2}$
\begin{align*}
\E L_n &\leq \frac{\Sigma_{\N_K}^2}{n} + \frac{4 \Sigma_{\N_K}\sqrt{2}A}{\sqrt{C}} \frac{K^{1/3}}{n^{4/3}} + \frac{12K\Sigma_{\N_K}^2}{n^2} + (\sum_{p} w_p^2 \si_p^2) n^{-2}\\
&\leq \frac{\Sigma_{\N_K}^2}{n} + \frac{4 \Sigma_{\N_K}\sqrt{2}A}{\sqrt{C}} \frac{K^{1/3}}{n^{4/3}} + \frac{14K\Sigma_{\N_K}^2}{n^2}.
\end{align*}

Since $\delta = n^{-2}$, we have $A \leq 6\sqrt{(1 + 3b + 4\bar{V})\log(nK)}$ and $C \geq \Big(\frac{4}{f_{\max} +  4}\Big)^{2/3}$, this leads to
\begin{align*}
\E L_n \leq \frac{\Sigma_{\N_K}^2}{n} + 24\sqrt{2} \Sigma_{\N_K}\sqrt{(1 + 3b + 4 f_{\max}^2)}\Big(\frac{f_{\max} +  4}{4}\Big)^{1/3} \frac{K^{1/3}}{n^{4/3}}\sqrt{\log(nK_n)} + \frac{14K\Sigma_{\N_K}^2}{n^2}.
\end{align*}

\end{proof}

\section{Proof of Proposition~\ref{lem:absconv}}\label{s:cvdimd}

\paragraph{Step 1: Expression of the variance of the stratified estimate.}
Note that the samples $f(x) + s(x) \no_t$ where $\no_t \sim \nu_x$ and $\E_{\nu_x}[\no_t] =0$, $\V_{\nu_x}[\no_t] = 1$ the $\no_t$ are independent.
\\
We have
\begin{align*}
\si_k^2 &= \frac{1}{w_k} \int_{\Omega_k} \E_{\nu_x}[(X_x(t) - \mu_k)^2]dx\\
&= \frac{1}{w_k} \int_{\Omega_k} \E_{\nu_x}\Big[(f(x) +s(x)\no_t - \frac{1}{w_k} \int_{\Omega_k} f(u)du)^2\Big]dx\\
&= \frac{1}{w_k} \int_{\Omega_k} \E_{\nu_x}\Big[(f(x) - \frac{1}{w_k} \int_{\Omega_k} f(u)du)^2\Big]dx + \frac{1}{w_k} \int_{\Omega_k} \E_{\nu_x}\Big[s(x)^2\no_t^2\Big]dx\\
&= \frac{1}{w_k} \int_{\Omega_k} \big(f(x) - \frac{1}{w_k} \int_{\Omega_k} f(u)du \big)^2 dx + \frac{1}{w_k} \int_{\Omega_k} s(x)^2dx
\end{align*}

\paragraph{Step 2: Proof for the uniformly continuous functions.}

We first prove the result for a subset of $L_2([0,1]^d)$, namely the set of functions $m$ and $s$ that are uniformly continuous.

\begin{proposition}\label{lem:absconv1}
 If the functions $f$ and $s$ are uniformly continuous and if the strata satisfy the Assumptions of Proposition~\ref{lem:absconv}, we have
\begin{equation*}
 \sum_k w_{k,n}\si_{k,n} - \int_{[0,1]^d} s(x) dx \rightarrow 0
\end{equation*}
\end{proposition}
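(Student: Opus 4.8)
\textbf{Proof plan for Proposition~\ref{lem:absconv1}.}

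The plan is to exploit the exact decomposition of $\si_k^2$ obtained in Step~1, namely
\[
\si_{k,n}^2 = \frac{1}{w_{k,n}} \int_{\Omega_{k,n}} \Big(f(x) - \frac{1}{w_{k,n}} \int_{\Omega_{k,n}} f(u)du \Big)^2 dx + \frac{1}{w_{k,n}} \int_{\Omega_{k,n}} s(x)^2 dx,
\]
and to compare each $\si_{k,n}$ with the ``pure noise'' quantity $\frac{1}{w_{k,n}}\int_{\Omega_{k,n}} s(x)dx$. First I would fix $\upsilon>0$ and invoke uniform continuity of $f$ and $s$ (with respect to $\|\cdot\|_2$) to get a modulus $\eta>0$ such that $|f(x+u)-f(x)|\le\upsilon$ and $|s(x+u)-s(x)|\le\upsilon$ whenever $\|u\|_2\le\eta$. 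By assumptions AS1 and AS2 of Proposition~\ref{lem:absconv}, there exists $n$ large enough that $\max_k Diam(\Omega_{k,n})\le\eta$, so on each stratum the oscillations of $f$ and $s$ are at most $\upsilon$.

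The key estimates are then two elementary bounds, stratum by stratum. For the first (approximation) term: since $|f(x)-f(y)|\le\upsilon$ for $x,y\in\Omega_{k,n}$, the centered average $f(x)-\frac{1}{w_{k,n}}\int_{\Omega_{k,n}}f$ has absolute value at most $\upsilon$, so $\frac{1}{w_{k,n}}\int_{\Omega_{k,n}}(f(x)-\frac{1}{w_{k,n}}\int_{\Omega_{k,n}}f)^2dx \le \upsilon^2$. For the second term, write $\frac{1}{w_{k,n}}\int_{\Omega_{k,n}} s(x)^2dx - \big(\frac{1}{w_{k,n}}\int_{\Omega_{k,n}} s(x)dx\big)^2 = \frac{1}{w_{k,n}}\int_{\Omega_{k,n}}(s(x)-\bar s_k)^2dx \le \upsilon^2$ where $\bar s_k = \frac{1}{w_{k,n}}\int_{\Omega_{k,n}}s$, again by the oscillation bound. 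Adding these gives
\[
\si_{k,n}^2 - \Big(\frac{1}{w_{k,n}} \int_{\Omega_{k,n}} s(x)dx\Big)^2 \le 2\upsilon^2,
\]
which is the inequality quoted in the sketch. Taking square roots and using subadditivity of $\sqrt{\cdot}$ (i.e. $\sqrt{a^2+c}\le a+\sqrt c$ for $a,c\ge0$, equivalently concavity of the square root) yields $\si_{k,n} \le \frac{1}{w_{k,n}}\int_{\Omega_{k,n}}s(x)dx + \sqrt{2}\,\upsilon$. Multiplying by $w_{k,n}$ and summing over $k$, the noise terms telescope to $\int_{[0,1]^d} s(x)dx$ and the error terms sum to $\sqrt 2\,\upsilon\sum_k w_{k,n} = \sqrt 2\,\upsilon$; conversely $\si_{k,n}\ge \frac{1}{w_{k,n}}\int_{\Omega_{k,n}}s(x)dx$ (drop the nonnegative approximation term and use $\int s^2 \ge (\int s)^2/w$ by Cauchy--Schwarz, then $\sqrt{\cdot}$), giving the reverse bound $\sum_k w_{k,n}\si_{k,n} \ge \int_{[0,1]^d}s(x)dx$. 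Hence $0 \le \sum_k w_{k,n}\si_{k,n} - \int_{[0,1]^d} s(x)dx \le \sqrt 2\,\upsilon$ for all $n$ large, and since $\upsilon$ was arbitrary the difference converges to $0$.

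The only mild subtlety — and the place I would be most careful — is making sure AS1 and AS2 really force $\max_k Diam(\Omega_{k,n})\to 0$ uniformly: AS2 bounds the diameter of a stratum by $D(w_{k,n})$ with $D(w)\to 0$ as $w\to 0$, and AS1 gives $w_{k,n}\le\upsilon_n\to 0$ uniformly in $k$, so $\max_k Diam(\Omega_{k,n})\le D(\upsilon_n)\to 0$ (using that $D$ can be taken nondecreasing, or else bounding $D$ near $0$). Everything else is a routine chain of elementary inequalities; no compactness or measure-theoretic machinery beyond Cauchy--Schwarz on a stratum is needed, because uniform continuity already delivers the oscillation control that does all the work.
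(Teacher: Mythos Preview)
Your proof is correct and follows essentially the same route as the paper: fix $\upsilon$, use uniform continuity and AS1--AS2 to make all stratum diameters at most $\eta$, bound each of the two variance contributions by $\upsilon^2$ to get $\si_{k,n}^2 - \big(\frac{1}{w_{k,n}}\int_{\Omega_{k,n}}s\big)^2 \le 2\upsilon^2$, pass to square roots, and sum. You add two small refinements the paper omits---the explicit lower bound $\si_{k,n}\ge \frac{1}{w_{k,n}}\int_{\Omega_{k,n}}s$ via Cauchy--Schwarz, and the remark that one needs $D$ nondecreasing (or bounded near $0$) to conclude $\max_k D(w_{k,n})\le D(\upsilon_n)$---but the core argument is identical.
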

\begin{proof}

Let $\upsilon >0$. As $s$ and $f$ are uniformly continuous, we know that $\forall x$, $\exists \eta$ such that $|s(x+u) - s(x)|\leq \upsilon$ and $|f(x+u) -f(x)|\leq \upsilon$ where $u \in \B_{2,d}(\eta)$\footnote{We denote by $B_{2,d}(\eta)$ the ball of center $0$ and radius $\eta$ according to the $||.||_2$ norm.}.
\\
By Assumption~AS1, we know that $w_{k,n} \leq \upsilon_n$. Note that the diameter of strata $\Omega_{k,n}$ is smaller than $D(w_{k,n}) \leq D(\upsilon_n)$. Let us choose $n$ big enough, i.e.~such that $D(\upsilon_n) \leq \eta$ and $\upsilon_n \leq \upsilon$.
\\
We have 
\begin{align*}
\si_{k,n}^2 - (\frac{1}{w_{k,n}} \int_{\Omega_{k,n}} s)^2 &=  \frac{1}{w_{k,n}}\int_{\Omega_{k,n}} s^2  - \Big(\frac{1}{w_{k,n}}\int_{ \Omega_{k,n}} s \Big)^2 + \frac{1}{w_{k,n}}\int_{\Omega_{k,n}} \Big( f - \frac{1}{w_{k,n}}\int_{\Omega_{k,n}} f  \Big)^2\\
&= \frac{1}{w_{k,n}}\int_{\Omega_{k,n}} \Big( s - \frac{1}{w_{k,n}}\int_{\Omega_{k,n}} s \Big)^2 + \frac{1}{w_{k,n}}\int_{\Omega_{k,n}} \Big( f - \frac{1}{w_{k,n}}\int_{\Omega_{k,n}} f  \Big)^2 \\
&\leq \upsilon^2 + \upsilon^2 \leq 2\upsilon^2.
\end{align*}

Because of concavity of the square-root function, we get
\begin{align*}
\si_{k,n} - (\frac{1}{w_{k,n}} \int_{\Omega_{k,n}} s) &\leq  \sqrt{2}\upsilon.
\end{align*}

By summing we get
\begin{equation*}
 \sum_k w_{k,n}\si_{k,n} - \int_{[0,1]^d} s  \leq \sqrt{2} \upsilon.
\end{equation*}

\end{proof}

\paragraph{Step 3: Density of uniformly continuous functions in $L_{2}([0,1]^d)$.}

We first remind a property of the functions in $L_{2}([0,1]^d)$.

\begin{proposition}\label{prop:density}
The uniformly continuous functions according to the $||.||_2$ norm are dense in $L_{2}([0,1]^d)$.
\end{proposition}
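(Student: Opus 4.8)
The plan is to reduce the claim to two classical facts: that on the compact cube $[0,1]^d$ uniform continuity and continuity coincide, and that continuous functions are dense in $L_2$ of a finite measure space. First I would invoke the Heine--Cantor theorem: since $[0,1]^d$ is compact, every function continuous on $[0,1]^d$ is automatically uniformly continuous with respect to the $||.||_2$ norm. Hence it suffices to prove that $C([0,1]^d)$ is dense in $L_2([0,1]^d)$.

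For the density of $C([0,1]^d)$ in $L_2([0,1]^d)$ I would argue in three steps. (i) Simple functions, i.e.\ finite linear combinations $\sum_i a_i \mathbb{I}_{A_i}$ with the $A_i$ Lebesgue measurable, are dense in $L_2([0,1]^d)$: given $g\in L_2$, split it into positive and negative parts, approximate each monotonically from below by nonnegative simple functions, and pass to the limit in $L_2$ by dominated convergence (the dominating function being $|g|\in L_2$). (ii) By linearity it then suffices to approximate a single indicator $\mathbb{I}_A$ of a measurable set $A\subseteq[0,1]^d$ by a continuous function. Using the inner and outer regularity of Lebesgue measure, for every $\eta>0$ one can choose a closed set $F$ and a set $U$ open in $\mathbb{R}^d$ with $F\subseteq A\subseteq U$ and $\lambda(U\setminus F)<\eta$. (iii) By Urysohn's lemma --- or explicitly via $g(x)=\operatorname{dist}(x,U^c)/\bigl(\operatorname{dist}(x,U^c)+\operatorname{dist}(x,F)\bigr)$ --- there is a continuous $g:[0,1]^d\to[0,1]$ with $g\equiv 1$ on $F$ and $g\equiv 0$ outside $U$. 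Then $|g-\mathbb{I}_A|\le 1$ and $g\ne \mathbb{I}_A$ only on a subset of $U\setminus F$, so $\|g-\mathbb{I}_A\|_2^2\le \lambda(U\setminus F)<\eta$. Chaining the three steps with a triangle inequality produces, for every $g\in L_2([0,1]^d)$ and every $\upsilon>0$, a continuous --- hence uniformly continuous --- function within $\upsilon$ of $g$ in the $L_2$ norm.

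An essentially equivalent alternative is mollification: extend $g$ by $0$ to $\mathbb{R}^d$, set $g_\varepsilon=g*\rho_\varepsilon$ for a smooth compactly supported mollifier $\rho_\varepsilon$; standard properties of convolution give $g_\varepsilon\to g$ in $L_2(\mathbb{R}^d)$ as $\varepsilon\to 0$, while each $g_\varepsilon$ is $C^\infty$ and its restriction to $[0,1]^d$ is therefore uniformly continuous. I do not expect a genuine obstacle here --- the result is textbook --- and the only points requiring a little care are the initial reduction (continuity $\Leftrightarrow$ uniform continuity on the compact cube, so that the approximant lies in the required class) and making sure the approximating function in step (iii) is continuous on all of $[0,1]^d$, not merely on the open set $U$.
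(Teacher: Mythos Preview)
Your proposal is correct and follows essentially the same route as the paper: reduce to the density of $C([0,1]^d)$ in $L_2([0,1]^d)$ and then use compactness of $[0,1]^d$ to identify continuous with uniformly continuous via Heine--Cantor. The only difference is that the paper merely cites the density of continuous functions in $L_2$ (attributing it, somewhat loosely, to ``Stone--Weierstrass''), whereas you spell out a full argument via simple functions, regularity of Lebesgue measure, and Urysohn; your version is more detailed but not a different approach.
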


\begin{proof}
 The result follows directly from the facts that
\begin{itemize}
 \item The continuous functions are dense in $L_{2}(\Omega)$ (Stone-Weierstrass Theorem).
\item The uniformly continuous functions on a compact space $\Omega$ according to the $||.||_2$ norm are dense in the space of continuous functions.
\item $[0,1]^d$ is a compact.
\end{itemize}

\end{proof}
This means that we can approximate with arbitrary precision according to the $||.||_2$ measure on $L_{2}([0,1]^d)$ any function in $L_{2}([0,1]^d)$ by an uniformly continuous function.
\\
Using this proposition, we can prove the following Lemma.
\begin{lemma}\label{lem:unifcont}
For a given $n$ and a given $\upsilon$, there exist two uniformly continuous function $m_{\upsilon}$ and $s_{\upsilon}$ such that:
\begin{equation*}
\Big|\sum_{k=1}^{K_n} w_{k,n}\si_{k,n} - \sum_{k=1}^{K_n} \sqrt{w_{k,n}}\sqrt{ \int_{\Omega_{k,n}} \Big(f_{\upsilon}(x) + \int_{\Omega_{k,n}} f_{\upsilon}(u)du\Big)^2dx   - \frac{1}{w_{k,n}} \int_{\Omega_{k,n}} s_{\upsilon}^2(x) dx}\Big| \leq \upsilon.
\end{equation*}
\end{lemma}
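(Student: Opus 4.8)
\textbf{Proof plan for Lemma~\ref{lem:unifcont}.}
The goal is to replace the true functions $f$ and $s$ inside the quantity $\sum_k w_{k,n}\si_{k,n}$ by uniformly continuous approximations, controlling the error uniformly over the (fixed, finite) partition $\N_n$. The plan is to first rewrite, as in Step~1 of the proof of Proposition~\ref{lem:absconv}, the within-stratum standard deviation as
\begin{equation*}
\si_{k,n} = \sqrt{\frac{1}{w_{k,n}}\int_{\Omega_{k,n}}\Big(f(x) - \frac{1}{w_{k,n}}\int_{\Omega_{k,n}} f(u)du\Big)^2 dx + \frac{1}{w_{k,n}}\int_{\Omega_{k,n}} s(x)^2 dx},
\end{equation*}
so that $\sum_k w_{k,n}\si_{k,n}$ is an explicit functional of $(f,s)\in \mathbb{L}_2([0,1]^d)^2$. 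By Proposition~\ref{prop:density}, choose uniformly continuous $f_\upsilon, s_\upsilon$ with $\|f - f_\upsilon\|_{2}$ and $\|s - s_\upsilon\|_{2}$ as small as we like (the required smallness to be fixed at the end as a function of $\upsilon$, $n$, and $\min_k w_{k,n}$).

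The key step is a Lipschitz-type estimate showing the functional $(f,s)\mapsto \sum_k w_{k,n}\si_{k,n}$ is continuous in the $\mathbb{L}_2$ norm on the fixed partition. First I would use the elementary inequality $|\sqrt a - \sqrt b|\le \sqrt{|a-b|}$ together with subadditivity of the square root to bound, stratum by stratum,
\begin{equation*}
\Big|\sqrt{w_{k,n}}\,\si_{k,n}^{(f,s)} - \sqrt{w_{k,n}}\,\si_{k,n}^{(f_\upsilon,s_\upsilon)}\Big| \le \sqrt{\int_{\Omega_{k,n}}\Big(\textrm{Var-term}(f) - \textrm{Var-term}(f_\upsilon)\Big)} + \sqrt{\int_{\Omega_{k,n}}\big(s^2 - s_\upsilon^2\big)}.
\end{equation*}
The variance-term difference is handled by noting that $g\mapsto \big(g - \frac{1}{w}\int g\big)$ is a bounded linear projection on $\mathbb{L}_2(\Omega_{k,n})$ (contraction of norm $\le 1$), so its $\mathbb{L}_2(\Omega_{k,n})$ norm is controlled by $\|f-f_\upsilon\|_{\mathbb{L}_2(\Omega_{k,n})}$; combined with a triangle/factorization inequality $|a^2-b^2|$-type bound (using that $f,s,f_\upsilon,s_\upsilon$ are bounded, or alternatively a Cauchy--Schwarz argument $\int|s^2-s_\upsilon^2| = \int |s-s_\upsilon||s+s_\upsilon| \le \|s-s_\upsilon\|_2\|s+s_\upsilon\|_2$) this gives each stratum's error $\lesssim \sqrt{\|f-f_\upsilon\|_{\mathbb{L}_2(\Omega_{k,n})}} + \sqrt{\|s-s_\upsilon\|_{\mathbb{L}_2(\Omega_{k,n})}}$ up to constants depending on the $\mathbb{L}_2$ norms of the functions. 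Then I would sum over the $K_n$ strata, using Cauchy--Schwarz ($\sum_k \sqrt{a_k}\le \sqrt{K_n}\sqrt{\sum_k a_k}$) and the fact that $\sum_k \|f-f_\upsilon\|^2_{\mathbb{L}_2(\Omega_{k,n})} = \|f-f_\upsilon\|^2_{\mathbb{L}_2([0,1]^d)}$, to obtain a global bound of the form $C(n)\big(\sqrt{\|f-f_\upsilon\|_2} + \sqrt{\|s-s_\upsilon\|_2}\big)$ where $C(n)$ depends on $K_n$ and the norms. Finally, choosing the approximations fine enough makes this $\le \upsilon$, which is exactly the claimed inequality.

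The main obstacle is bookkeeping the constants carefully: because we sum over $K_n$ strata and the square root is concave, a naive bound loses a factor of $\sqrt{K_n}$, so one must be slightly careful that the approximation quality demanded of $f_\upsilon, s_\upsilon$ is allowed to depend on the fixed $n$ (which it is — the statement quantifies "for a given $n$ and a given $\upsilon$"). A secondary technical point is that the $\mathbb{L}_2$ approximants from Stone--Weierstrass need not be uniformly bounded a priori, so either one truncates them (a uniformly continuous function truncated at a high level is still uniformly continuous and only improves the $\mathbb{L}_2$ distance to a bounded target) or one carries the $\mathbb{L}_2$ norms of $f+f_\upsilon$, $s+s_\upsilon$ explicitly through the Cauchy--Schwarz steps; both routes are routine once the structure above is set up.
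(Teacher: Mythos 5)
Your proposal is correct and follows essentially the same route as the paper: approximate $f$ and $s$ in $\mathbb{L}_2$ by uniformly continuous functions (Proposition~\ref{prop:density}) with accuracy depending on the fixed finite partition, then propagate the perturbation through the explicit expression for $\si_{k,n}$ via $|\sqrt{a}-\sqrt{b}|\le\sqrt{|a-b|}$ and sum over the strata. The only differences are bookkeeping ones — the paper calibrates the $\mathbb{L}_2$ error against $\min_k w_{k,n}$ so that the weighted sum $\sum_k w_{k,n}=1$ absorbs the per-stratum errors directly, whereas you accept a harmless $\sqrt{K_n}$ loss from Cauchy--Schwarz since $n$ is fixed — and your Cauchy--Schwarz bound $\int|s^2-s_\upsilon^2|\le \norm{s-s_\upsilon}_2\norm{s+s_\upsilon}_2$ is in fact more careful than the corresponding inequality asserted in the paper's own proof.
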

\begin{proof}
Let us fix $n$ and $\upsilon$.
\\
 Let $m_{\upsilon}$ be an uniformly continuous function such that
\begin{equation*}
 \int_{\Omega} (f(x)-f_{\upsilon}(x))^2 dx \leq \min_k(w_{k,n})\frac{\upsilon}{2}, 
\end{equation*}
and $s_{\upsilon}$ be an uniformly continuous function such that
\begin{equation*}
 \int_{\Omega} (s(x)-s_{\upsilon}(x))^2 dx \leq \min_k(w_{k,n})\frac{\upsilon}{2}.
\end{equation*}
It is possible because of $w_{k,n} >0$ and because the uniformly continuous functions are dense in $L_2([0,1]^d)$ by Proposition~\ref{prop:density}.
\\
Note that we thus have
\begin{equation*}
\frac{1}{w_{k,n}} \int_{\Omega_{k,n}} (f(x)-f_{\upsilon}(x))^2 dx \leq \frac{\upsilon}{2}, 
\end{equation*}
and
\begin{equation*}
 \frac{1}{w_{k,n}}  \int_{\Omega_{k,n}} (s(x)-s_{\upsilon}(x))^2 dx \leq \frac{\upsilon}{2}.
\end{equation*}
Note also that $\frac{1}{w_{k,n}}  \int_{\Omega_{k,n}} (s(x)-s_{\upsilon}(x))^2 dx \geq  \Big|\frac{1}{w_{k,n}}  \int_{\Omega_{k,n}} s(x)^2 dx - \frac{1}{w_{k,n}}  \int_{\Omega_{k,n}} s_{\upsilon}(x)^2 dx \Big|$.
\\
Simple triangle inequality leads to
\begin{equation*}
\Big| \frac{1}{w_{k,n}} \int_{\Omega_{k,n}} (f(x) - \frac{1}{w_{k,n}} \int_{\Omega_{k,n}} f(u)du)^2 dx - \frac{1}{w_{k,n}} \int_{\Omega_{k,n}} (f_{\upsilon}(x) - \frac{1}{w_{k,n}} \int_{\Omega_{k,n}} f_{\upsilon}(u) du)^2 dx \Big| \leq \frac{\upsilon}{2}.
\end{equation*}
Now note that as $\si_{k,n}^2 = \frac{1}{w_{k,n}} \int_{\Omega_{k,n}} (f(x) - \frac{1}{w_{k,n}} \int_{\Omega_{k,n}} f(u)du)^2 dx + \frac{1}{w_{k,n}} \int_{\Omega_{k,n}} s(x)^2 dx$, we know that the variance of the function on strata $\Omega_{k,n}$ is arbitrarily close to the variance of its approximation.
\\
By convexity, one gets
\begin{equation*}
\Big| \si_{k,n} - \sqrt{\frac{1}{w_{k,n}} \int_{\Omega_{k,n}} \Big(f_{\upsilon}(x) - \frac{1}{w_{k,n}} \int_{\Omega_{k,n}} f_{\upsilon}(u)du\Big)^2dx  + \frac{1}{w_{k,n}} \int_{\Omega_{k,n}} s_{\upsilon}^2(x) dx}\Big| \leq \upsilon.
\end{equation*} 
And finally, by summing
\begin{equation*}
\Big|\sum_{k=1}^{K_n} w_{k,n}\si_{k,n} - \sum_{k=1}^{K_n} \sqrt{w_{k,n}}\sqrt{ \int_{\Omega_{k,n}} \Big(f_{\upsilon}(x) + \int_{\Omega_{k,n}} f_{\upsilon}(u)du\Big)^2dx   - \frac{1}{w_{k,n}} \int_{\Omega_{k,n}} s_{\upsilon}^2(x) dx}\Big| \leq \upsilon.
\end{equation*}
\end{proof}

\paragraph{Step 4: Combination of all the preliminary results to finish the proof.}
Finally, we finish the demonstration of Proposition~\ref{lem:absconv}.

 Let $\upsilon>0$ and $f_{\upsilon}$ and $s_{\upsilon}$ be as in Lemma~\ref{lem:unifcont}.
\\
We know that 
\begin{equation*}
\Big|\sum_{k=1}^{K_n} w_{k,n}\si_{k,n} - \sum_{k=1}^{K_n} \sqrt{w_{k,n}}\sqrt{ \int_{\Omega_{k,n}} \Big(f_{\upsilon}(x) + \int_{\Omega_{k,n}} f_{\upsilon}(u)du\Big)^2dx   - \frac{1}{w_{k,n}} \int_{\Omega_{k,n}} s_{\upsilon}^2(x) dx}\Big| \leq \upsilon,
\end{equation*}
and also that
\begin{equation*}
 \int_{\Omega} (s(x)-s_{\upsilon}(x))^2 dx \leq \min_k(w_{k,n})\frac{\upsilon}{2} \leq \frac{\upsilon}{2}.
\end{equation*}
Note that by Cauchy-Schwartz:
\begin{equation*}
\int_{\Omega} |s(x)-s_{\upsilon}(x)| dx \leq  \sqrt{\int_{\Omega} (s(x)-s_{\upsilon}(x))^2 dx}  \leq \sqrt{\frac{\upsilon}{2}}.
\end{equation*}
Note also that Proposition~\ref{lem:absconv1} tells us that $\exists n$ such that
\begin{equation*}
 \sum_{k=1}^{K_n} \sqrt{w_{k,n}}\sqrt{\int_{\Omega_{k,n}} \Big(f_{\upsilon}(x) - \frac{1}{w_{k,n}} \int_{\Omega_{k,n}} f_{\upsilon}(u)du\Big)^2dx   +  \int_{\Omega_{k,n}} s_{\upsilon}^2(x) dx} - \int_{[0,1]^d} s_{\upsilon}(x)dx \leq \upsilon.
\end{equation*}
When combining all those results, one gets the desired result.

Note finally that if we choose the strata as being small boxes of size $\frac{1}{K}$ and side $(\frac{1}{K})^{1/d}$, then the assumptions of Proposition~\ref{lem:absconv} is verified.

\section{Proof of Proposition~\ref{prop:noisy}}\label{ss:ratelip}

Note first that
\begin{align*}
 \sigma_{k}^2 = \frac{1}{w_{k}}\int_{\Omega_{k}} \Big(f(x) - \frac{1}{w_{k}} \int_{\Omega_{k}} f(u)du \Big)^2dx + \frac{1}{w_{k}} \int_{\Omega_{k}} s^2(x)dx.
\end{align*}

\paragraph{The term in $f$}
As the function $f$ is $(\alpha,M)-$ H\"older, we know that $\forall(x,y)  \in \Omega, |f(x) - f(y)| \leq M||x-y||_2^{\alpha}$.
\\
Using that we get
\begin{align*}
\frac{1}{w_{k}}\int_{\Omega_{k}} \Big(f(x) - \frac{1}{w_{k}} \int_{\Omega_{k}} f(u)du \Big)^2dx &\leq M^2D(\Omega_k)^{2\alpha}\\
&\leq M^2 d (\frac{1}{K})^{2\alpha/d}.
\end{align*}

\paragraph{The term in $s$}
As the function $s$ is $(\alpha,M)-$ H\"older, we know that $\forall(x,y)  \in \Omega, |s(x) - s(y)| \leq M||x-y||_2^{\alpha}$.
\begin{align*}
 \frac{1}{w_{k}} \int_{\Omega_{k}} s^2(x)dx - \big(\frac{1}{w_{k}} \int_{\Omega_{k}} s(u)du\big)^2 = \frac{1}{w_{k}} \int_{\Omega_{k}} \big(s(x) - \frac{1}{w_{k}} \int_{\Omega_{k}} s(u)du\big)^2 dx &\leq M^2D(\Omega_{k})^{2\alpha}\\
&\leq M^2  d (\frac{1}{K})^{2\alpha/d}.
\end{align*}

\paragraph{Finally...}
By combining those two results
\begin{align*}
w_{k}\sigma_{k} - \int_{\Omega_{k}} s(x)dx &\leq  w_{k} \sqrt{\si_{k}^2 - \big(\frac{1}{w_{k}} \int_{\Omega_{k}} s(x)dx\big)^2}\\
&\leq w_{k} \sqrt{M^2 d (\frac{1}{K})^{2\alpha/d} + M^2 d (\frac{1}{K})^{2\alpha/d}}.
\end{align*}
By summing over all the strata, one obtains
\begin{equation*}
 \Sigma_{\N_K} - \int_{[0,1]^d} s(x) dx \leq \sqrt{2d} M  (\frac{1}{K})^{\alpha/d}.
\end{equation*}

\section{Lower bound}\label{app:lowbound}

Let us write the proof of the lower bound using the terminology of multi-armed bandits. Each arm $k$ represents a stratum and the distribution associated to this arm is defined as the distribution of the noisy samples of the function collected when sampling uniformly on the strata.

Let us choose $\mu<1/2$ and $\al = \frac{\mu}{2}$. Consider $2K$ Bernoulli bandits (i.e., $2K$ strata where the samples follow Bernoulli distributions) where the $K$ first bandits have parameter $(\mu_k)_{1\leq k\leq K}$ and the $K$ last ones have parameter $1/2$. The $\mu_k$ take values in $\{\mu-\al,\mu,\mu+\al\}$.

Define $\sigma^2 = \mu(1-\mu)$ the variance of a Bernoulli of parameter $\mu$, and is such that $\sqrt{\frac{1}{2} \mu} \leq \sigma \leq \sqrt{\mu}$. We wite $\sigma_{-\al}$ and $\sigma_{+\al}$ the two other standard deviations, and notice that $\frac{1}{2}\sqrt{\mu} \leq \sigma_{-\al} \leq \sqrt{\mu}$, and $\sqrt{\frac{1}{2} \mu} \leq \sigma_{+\al} \leq \sqrt{\mu}$.

We consider the $2^K$ bandit environments $M(\upsilon)$ (characterized by  $\upsilon=(\upsilon_k)_{1\leq k \leq K} \in \{-1,+1\}^K$) defined by $(\mu_k = \mu + \upsilon_k \al)_{1\leq k\leq K}$. We write $\P_{\upsilon}$ the probability with respect to the environment $M(\upsilon)$ at time $n$. We also write $M(\sigma)$ the environment defined by all $K$ first arms having a parameter $\sigma$, and write $\P_{\sigma}$ the associated probability at time $n$.

The optimal oracle allocation for environment $M(\upsilon)$ is to play arm $k \leq K$, $t_k(\upsilon)=\frac{\sigma_{\upsilon_k \al}}{\sum_{i=1}^K \sigma_{\upsilon_i \al} +K/2} n$ times and arm $k > K$, $t_{k}(\upsilon)=\frac{1/2}{\sum_{i=1}^K \sigma_{\upsilon_i \al} +K/2} n$ times. The corresponding quadratic error of the resulting estimate is $l(\upsilon) = \frac{(\sum_{i=1}^K \sigma_{\upsilon_i \al} +K/2)^2}{(2K)^2n}$. For the environment $M(\sigma)$, the optimal oracle allocation is to play arm $k \leq K$, $t(\sigma)=\frac{\sigma}{K\sigma +K/2} n$ times (and arm $k > K$, $t_2(\sigma)=\frac{1/2}{K\sigma +K/2} n$ times).

Consider deterministic algorithms first (extension to randomized algorithms will be discussed later). An algorithm is a set (for all $t=1$ to $n-1$) of mappings from any sequence $(r_1,\dots,r_t)\in\{0,1\}$ of $t$ observed samples (where $r_s\in\{0,1\}$ is the sample observed at the $s$-th round) to the choice of an arm $I_{t+1}\in\{1,\ldots,2K\}$.  Write $T_k(r_1,\dots,r_{n})$ the (random variable) corresponding to the number of pulls of arm $k$ up to time $n$. We thus have $n = \sum_{k=1}^{2K} T_k$.

Now, consider the set of algorithms that know that the $K$ first arms have parameter $\mu_k \in \{\mu-\al,\mu,\mu+\al\}$, and that also know that the $K$ last arms have their parameters in $\{1/4,3/4\}$. 
Given this knowledge, an optimal algorithm will not pull any arm $k\leq K$ more than $\Big(\frac{\sigma_{+\al}}{K\sigma_{-\al} + \sqrt{3}K/4}\Big) n$ times. Indeed, the optimal oracle allocation in \textit{all} such environments allocates less than $\Big(\frac{\sigma_{+\al}}{K\sigma_{-\al} + \sqrt{3}K/4}\Big) n$ samples to each arm $k\leq K$. In addition, since the samples of all arms are independent, a sample collected from arm $k$ does not provide any information about the relative allocations among the other arms. Thus, once an arm has been pulled as many times as recommended by the optimal oracle strategy, there is no need to allocate more samples to that arm. Writing $\mathbb A$ the class of all algorithms that do not know the set of possible environments, $\mathbb A_{\upsilon}$ the class of algorithms that know the set of possible environments $M(\upsilon)$ and $\mathbb A_{opt}$ the subclass of $\mathbb A_{\upsilon}$ that pull all arms $k \leq K$ less than $\Big(\frac{\sigma_{+\al}}{K\sigma_{-\al} + \sqrt{3}K/4}\Big) n$ times, we have
$$ \inf_{\mathbb A} \sup_{M(\upsilon)} \E R_n \geq \inf_{\mathbb A_{\upsilon}} \sup_{M(\upsilon)} \E R_n = \inf_{\mathbb A_{opt}} \sup_{M(\upsilon)} \E R_n,$$
where the first inequality comes from the fact that algorithms in $\mathbb A_{\upsilon}$ possess more information than those in $\mathbb A$, which they can use or not. Thus $\mathbb A\subset \mathbb A_{\upsilon}$.

Now for any $\upsilon = (\upsilon_1,\ldots,\upsilon_K)$, define the events 
\begin{align*}
\Omega_{\upsilon} = \{\omega : \forall \U \subset \{1,\ldots,K\} : |\U| \leq \frac{K}{3} \mbox{ and } \forall k \in \U^c, \upsilon_k T_k \geq \upsilon_k t(\sigma) \}.
\end{align*}

Note that by definition
\begin{align*}
\Omega_{\upsilon} = \bigcup_{p=1}^{\frac{K}{3}}\bigcup_{\U \subset \{1,\ldots,K\} : |\U| = p} \Bigg\{\Big\{\bigcap_{ k \in \U} \{\upsilon_k T_k < \upsilon_k t(\sigma) \} \Big\} \bigcap \Big\{\bigcap_{ k \in \U^C} \{\upsilon_k T_k \geq \upsilon_k t(\sigma) \} \Big\}\Bigg\}.
\end{align*}

By the sub-additivity of the probabilities, we have
\begin{align*}
\P_{\sigma}(\Omega_{\upsilon}) \leq \sum_{p=1}^{\frac{K}{3}}\sum_{\U \subset \{1,\ldots,K\} : |\U| = p} \P\Bigg[\Bigg\{\Big\{\bigcap_{ k \in \U} \{\upsilon_k T_k < \upsilon_k t(\sigma) \} \Big\} \bigcap \Big\{\bigcap_{ k \in \U^C} \{\upsilon_k T_k \geq \upsilon_k t(\sigma) \} \Big\}\Bigg\}\Bigg].
\end{align*}

The events $\Bigg\{\Big\{\bigcap_{ k \in \U} \{\upsilon_k T_k < \upsilon_k t(\sigma) \} \Big\} \bigcap \Big\{\bigcap_{ k \in \U^C} \{\upsilon_k T_k \geq \upsilon t(\sigma) \} \Big\}\Bigg\}$ are disjoint for different $\upsilon$, and form a partition of the space, thus $\sum_{\upsilon} \P_{\sigma} \Bigg[\Bigg\{\Big\{\bigcap_{ k \in \U} \{\upsilon_k T_k < \upsilon_k t(\sigma) \} \Big\} \bigcap \Big\{\bigcap_{ k \in \U^C} \{\upsilon T_k \geq \upsilon_k t(\sigma) \} \Big\}\Bigg\} \Bigg]=1$.

We deduce that
\begin{align*}
\sum_{\upsilon}\P_{\sigma}(\Omega_{\upsilon}) &\leq \sum_{\upsilon}\sum_{p=1}^{\frac{K}{3}}\sum_{\U \subset \{1,\ldots,K\} : |\U| = p} \P_{\sigma} \Bigg[\Bigg\{\Big\{\bigcap_{ k \in \U} \{\upsilon T_k < \upsilon_k t(\sigma) \} \Big\} \bigcap \Big\{\bigcap_{ k \in \U^C} \{\upsilon_k T_k \geq \upsilon_k t(\sigma) \} \Big\}\Bigg\} \Bigg]\\
&= \sum_{p=1}^{\frac{K}{3}}\sum_{\U \subset \{1,\ldots,K\} : |\U| = p} \sum_{\upsilon} \Bigg[\Bigg\{\Big\{\bigcap_{ k \in \U} \{\upsilon_k T_k < \upsilon_k t(\sigma) \} \Big\} \bigcap \Big\{\bigcap_{ k \in \U^C} \{\upsilon T_k \geq \upsilon_k t(\sigma) \} \Big\}\Bigg\} \Bigg]\\
&= \sum_{p=1}^{\frac{K}{3}}\sum_{\U \subset \{1,\ldots,K\} : |\U| = p} 1\\
&= \sum_{p=1}^{\frac{K}{3}} \left( \begin{array}{c} K\\ p \end{array} \right).
\end{align*}


Since there are $2^K$ environments $\upsilon$, we have
$$
 \min_{\upsilon} \P_{\sigma} (\Omega_{\upsilon}) \leq \frac{1}{2^{K}}\sum_{\upsilon}\P_{\sigma}(\Omega_{\upsilon})
\leq \frac{1}{2^{K}}\sum_{p=1}^{\frac{K}{3}} \left( \begin{array}{c} K\\ p \end{array} \right).
$$

Note that $\frac{1}{2^K}\sum_{p=1}^{\frac{K}{3}} \left( \begin{array}{c} K\\ p \end{array} \right) = \P(\sum_{k=1}^{K} X_k \leq \frac{K}{3})$ where $(X_1,\ldots,X_K)$ are $K$ independent Bernoulli random variables of parameter $1/2$. By Chernoff-Hoeffding's inequality, we have $\P(\sum_{k=1}^{K} X_k \leq \frac{K}{3}) = \P(\frac{1}{K}\sum_{k=1}^{K} X_k - \frac{1}{2}\leq \frac{K}{6}) \leq \exp(-K/72)$.
Thus there exists $\upsilon_{\min}$ such that $\P_{\sigma} (\Omega_{\upsilon_{\min}}) \leq \exp(-K/72)$.

Let us write $p = \P_{\upsilon_{\min}} (\Omega_{\upsilon_{\min}})$ and $p_{\sigma} = \P_{\sigma} (\Omega_{\upsilon_{\min}})$. Let $kl(a,b)=a\log(\frac ab)+(1-a)\log(\frac{1-a}{1-b})$ denote the KL for Bernoulli distributions with parameters $a$ and $b$. Note that because $\forall \Omega$, $KL(\P_{\upsilon_{\min}}(.|\Omega), \P_{\sigma}(.|\Omega)) \geq 0$, we have
\begin{align*}
  kl(p,p_{\sigma}) \leq KL(\P_{\upsilon_{\min}}, \P_{\sigma}).
\end{align*}

From that we deduce that $p(\log(p) - \log(p_{\si})) + (1-p) (\log(1-p) - \log(1-p_{\si})) \leq KL(\P_{\upsilon_{\min}}, \P_{\sigma})$, which leads to
\begin{align}\label{eq:maxcmin}
 p  \leq \max( \frac{36}{K}\Big(KL(\P_{\upsilon_{\min}}, \P_{\sigma})  \Big), \exp(-K/72)).
\end{align}

Let us now consider any environment $(\upsilon)$. Let $R_t = (r_1,\ldots,r_t)$ be the sequence of observations, and let $\P^t_{\upsilon}$ be the law of $R_t$ for environment $M(\upsilon)$. Note first that $\P_{\upsilon} = \P^n_{\upsilon}$. Adapting the chain rule for Kullback-Leibler divergence, we get
\begin{align*}
 &KL(\P^n_{\upsilon}, \P^n_{\sigma})\\ 
&= KL(\P^1_{\upsilon}, \P^1_{\sigma}) + \sum_{t=2}^n \sum_{R_{t-1}} \P_{\upsilon}^{t-1} (R_{t-1}) KL(\P^t_{\upsilon}(.|R_{t-1}), \P^t_{\sigma}(.|R_t))\\
&= KL(\P^1_{\sigma}, \P^1_{\upsilon}) + \sum_{t=2}^n\Big[ \sum_{R_{t-1}| \upsilon_{I_t} = +1 } \P_{\sigma}^{t-1} (R_{t-1}) kl(\mu +\alpha, \mu) + \sum_{R_{t-1}| \upsilon_{I_t} = -1} \P_{\sigma}^{t-1} (R_{t-1}) kl(\mu-\al , \mu) \Big] \\
&= kl(\mu - \al,\mu) \E_{\upsilon} [\sum_{k : \upsilon_k = -1} T_k] + kl(\mu + \al,\mu) \E_{\upsilon} [\sum_{k : \upsilon_k = +1} T_k].
\end{align*}

We thus have, using the property that $kl(a,b)\leq \frac{(a-b)^2}{b(1-b)}$,
\begin{align*}
KL(\P_{\upsilon}, \P_{\sigma}) &= kl(\mu - \al,\mu) \E_{\upsilon} [\sum_{k : \upsilon_k = -1} T_k] + kl(\mu + \al,\mu) \E_{\upsilon} [\sum_{k : \upsilon_k = +1} T_k]\\
&\leq \E_{\sigma} [\sum_{k\leq K} T_k] \frac{\al^2}{\mu(1-\mu)}\\
&= E_{\sigma}[\sum_{k\leq K} T_k] \frac{\al^2}{\sigma^2}.
\end{align*}

Note that for an algorithm in $\mathbb A_{opt}$, we have $\sum_{k=1}^K T_k \leq T_k \leq K\Big(\frac{\sigma_{+\al}}{K\sigma_{-\al} + \sqrt{3}K/4}\Big) n$. 
Since $\alpha = \frac{\mu}{2}$ and $0 <\mu \leq \frac{1}{2}$ we have
\begin{align*}
KL(\P_{\upsilon}, \P_{\sigma}) &\leq \Big(K \frac{\sigma_{+\al}}{K\sigma_{-\al} + \sqrt{3}K/4}\Big) \frac{\al^2}{\sigma^2} n\\
&\leq 4 \sigma_{+\al}  \frac{\al^2}{\sigma^2} n\\
&\leq 8 \frac{\al^2}{\sigma} n,
\end{align*}

We thus deduce using Equation~\ref{eq:maxcmin}
\begin{align*}
\P_{\upsilon_{\min}}(\Omega_{\upsilon_{\min}}) = p  &\leq \max( \frac{18}{K}\Big(KL(\P_{\upsilon_{\min}}, \P_{\sigma}) \Big), \exp(-K/72))\\
&\leq \frac{144}{K} \frac{\al^2}{\sigma} n.
\end{align*}

%
%

Now choose $\sigma \leq \frac{1}{7}(\frac{K}{n})^{1/3}$ (as $\alpha = \frac{\mu}{2} = \frac{\sigma^2}{2}$). Note that this implies that $\P_{\upsilon_{\min}}(\Omega_{\upsilon_{\min}}) \leq \frac{1}{2}$.

Let $\omega \in \Omega_{\upsilon_{\min}}^c$. We know that for $\omega$, there are at least $\frac{K}{3}$ arms among the $K$ first which are not pulled correctly: either $\frac{K}{6}$ arms among the arms with parameter $\mu-\al$ or among the arms with parameter $\mu+\al$ are not pulled correctly. Assume that for this fixed $\omega$, there are $\frac{K}{6}$ arms among the arms with parameter $\mu-\al$ which are not pulled correctly. Let $\U(\omega)$ be this subset of arms.

We write $\Delta T = \sum_{k \in \U} T_k - \frac{K}{6} t(\sigma_{-\al})$ the number of times those arms are over pulled. Note that on $\omega$ we have $\Delta T \geq \frac{K}{6} t(\sigma) - t(\sigma_{-\al})$. We have
\begin{align*}
\Delta T = \frac{K}{6}t(\sigma) - \frac{K}{6}t(\sigma_{-\alpha}) &= \frac{1}{6} \frac{K\sigma}{K\sigma + K/2}n - \frac{1}{6}\frac{K\sigma_{-\al}}{\sum_{i=1}^K \sigma_{\upsilon_i \al} + K/2}n\\ 
&\geq \frac{1}{6}\frac{K\sigma}{K\sigma + K/2}n- \frac{1}{6}\frac{K\sigma/\sqrt{2}}{\sqrt{3}K\sigma/\sqrt{2} + K/2}n\\
&\geq \frac{1}{6}\frac{1}{K\sigma + K/2}\frac{1}{\sqrt{3}K\sigma/\sqrt{2} + K/2} \Big(K^2 \sigma/2 - K^2 \sigma/2\sqrt{2}\Big) n \\
&\geq \frac{1}{2} (1 - 1/\sqrt{2}) \sigma n\\
&\geq \frac{1}{35} K^{1/3} n^{2/3}
\end{align*}

Thus on $\omega$, the regret is such that
\begin{align*}
 R_{n, \upsilon_{\min}}(\omega)
&\geq  \sum_{k=1}^{3K} \frac{w_k^2 \si_k^2}{T_k(\omega)} - \frac{1}{(2K)^2} \frac{(\sum_{i=1}^K \sigma_{\upsilon_i \al} + K/2 \big)^2}{n}\\
&\geq  \sum_{k \in \U(\omega)} \frac{w_k^2 \si_k^2}{T_k(\omega)} + \sum_{k \in \U(\omega)^C} \frac{w_k^2 \si_k^2}{T_k(\omega)} - \frac{1}{(2K)^2} \frac{(\sum_{i=1}^K \sigma_{\upsilon_i \al} + K/2 \big)^2}{n}\\
&\geq \frac{1}{K^2} \frac{K}{6}\frac{ \si_{-\al}^2}{t_k(\sigma_{-\al}) + 6\Delta T/K} + \frac{\big(\sum_{i=1}^K \sigma_{\upsilon_i \al} - K\sigma_{-\al}/6 + K/2\big)^2}{(2K-K/6)^2 (n - \Delta T)} - \frac{1}{(2K)^2} \frac{(\sum_{i=1}^K \sigma_{\upsilon_i \al} + K/2 \big)^2}{n}\\
&\geq \frac{1}{(2K)^2} \frac{\big(\sum_{i=1}^K \sigma_{\upsilon_i \al} + K/2 \big)^2}{n}  \frac{1 + \Big( \frac{\big(\sum_{i=1}^K \sigma_{\upsilon_i \al}  + K/2 \big)\Delta T}{\big( K\sigma_{-\al}/6  \big)n} - \frac{\big(\sum_{i=1}^K \sigma_{\upsilon_i \al}  + K/2 \big)\Delta T}{\big(\sum_{i=1}^K \sigma_{\upsilon_i \al} - K\sigma_{-\al}/6  + K/2\big)n}\Big)}{\Big(1 + \frac{6 \Delta T \big(\sum_{i=1}^K \sigma_{\upsilon_i \al}  + K/2 \big)}{K\si_{-\al}n} \Big) \Big(1 - \frac{\big(\sum_{i=1}^K \sigma_{\upsilon_i \al}  + K/2 \big)\Delta T}{\big(\sum_{i=1}^K \sigma_{\upsilon_i \al} - K\sigma_{-\al}/6  + K/2\big)n}\Big)} \\
 &- \frac{1}{(2K)^2} \frac{(\sum_{i=1}^K \sigma_{\upsilon_i \al} + K/2 \big)^2}{n}\\
&\geq \frac{1}{(2K)^2} \frac{(\sum_{i=1}^K \sigma_{\upsilon_i \al} + K/2 \big)^2}{n} \frac{\Big( \frac{\big(\sum_{i=1}^K \sigma_{\upsilon_i \al}  + K/2 \big)\Delta T}{\big(\sum_{i=1}^K \sigma_{\upsilon_i \al} - K\sigma_{-\al}/6  + K/2\big)n} \Big)\Big( \frac{\big(\sum_{i=1}^K \sigma_{\upsilon_i \al}  + K/2 \big)\Delta T}{\big( K\sigma_{-\al}/6 \big)n} \Big)}{\Big(1 + \frac{6 \Delta T \big(\sum_{i=1}^K \sigma_{\upsilon_i \al}  + K/2 \big)}{K\si_{-\al}n} \Big) \Big(1 - \frac{\big(\sum_{i=1}^K \sigma_{\upsilon_i \al}  + K/2 \big)\Delta T}{\big(\sum_{i=1}^K \sigma_{\upsilon_i \al} - K\sigma_{-\al}/6  + K/2\big)n}\Big)}\\
 &\geq C \frac{(\Delta T)^2}{n^3 \sigma}\\
&\geq C \frac{K^{1/3}}{n^{4/3}},
\end{align*}
where $C$ is a numerical constant. Note that for events $\omega$ where there are $\frac{K}{6}$ arms among the arms with parameter $\mu+\al$ which are not pulled correctly, the same result holds.

Note finally that $\P(\Omega_{\upsilon_{\min}}^c) \geq 1/2$. We thus have that the regret is bigger than
\begin{align*}
 \E R_{n, \upsilon_{\min}}
&\geq  \sum_{\omega \in \Omega_{\upsilon_{\min}}^c}  R_{n, \upsilon_{\min}}(\omega) \P_{\upsilon_{\min}} (\omega)\\
&\geq \sum_{\omega \in \Omega_{\upsilon_{\min}}^c} C \frac{K^{1/3}}{n^{4/3}} \P_{\upsilon_{\min}} (\omega)\\
&\geq \frac{1}{2} C \frac{K^{1/3}}{n^{4/3}},
\end{align*}
which proves the lower bound for deterministic algorithms.  Now the extension to randomized algorithms is straightforward: any randomized algorithm can be seen as a static (i.e., does not depend on samples) mixture of deterministic algorithms (which can be defined before the game starts). Each deterministic algorithm satisfies the lower bound above in expectation, thus any static mixture does so too.

\section{Large deviation inequalities for independent sub-Gaussian random variables}\label{s:tools}

We first state Bernstein inequality for large deviations of independent random variables around their mean.

\begin{lemma}\label{lem:bernstein}
 Let $(X_1,\ldots,X_n)$ be $n$ independent random variables of mean $(\mu_1,\ldots,\mu_n)$ and of variance $(\si_1^2,\ldots,\si_n^2)$. Assume that there exists $b>0$ such that for any $\lambda < \frac{1}{b}$, for any $i \leq n$, it holds that $\E\Big[ \exp(\lambda (X_i-\mu_i)) \Big] \leq \exp\Big( \frac{\lambda^2 \sigma_i^2}{2(1 - \lambda b)}\Big)$. Then with probability $1-\delta$
\begin{equation*}
|\frac{1}{n}\sum_{i=1}^nX_i - \frac{1}{n}\sum_{i=1}^n \mu_i| \leq \sqrt{\frac{2(\frac{1}{n}\sum_{i=1}^n \si_i^2)\log(2/\delta)}{n}} + \frac{b\log(2/\delta)}{n}.
\end{equation*}
\end{lemma}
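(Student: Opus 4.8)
\textbf{Proof proposal for Lemma~\ref{lem:bernstein}.}
The plan is a textbook Cram\'er--Chernoff argument, with the one subtlety being the choice of the exponential parameter. First I would center the variables, setting $Y_i = X_i - \mu_i$, $S = \sum_{i=1}^n Y_i$, and $V = \sum_{i=1}^n \sigma_i^2$. Using independence and the hypothesis on the moment generating functions, for every $\lambda \in (0,1/b)$,
\[
\E\big[e^{\lambda S}\big] \;=\; \prod_{i=1}^n \E\big[e^{\lambda Y_i}\big] \;\leq\; \exp\!\Big(\tfrac{\lambda^2 V}{2(1-\lambda b)}\Big).
\]
Since the assumption is stated for all $\lambda < 1/b$, it applies in particular at negative arguments, and one checks that $\E[e^{-\lambda S}] \leq \exp\big(\tfrac{\lambda^2 V}{2(1+\lambda b)}\big) \leq \exp\big(\tfrac{\lambda^2 V}{2(1-\lambda b)}\big)$ for $\lambda\in(0,1/b)$, so the lower tail of $S$ will obey exactly the same bound as the upper tail.

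Next I would apply Markov's inequality to $e^{\lambda S}$: for any $\lambda\in(0,1/b)$ and any $s>0$,
\[
\P\big(S \geq s\big) \;\leq\; \exp\!\Big(-\lambda s + \tfrac{\lambda^2 V}{2(1-\lambda b)}\Big).
\]
The key step is to pick, for a given $t>0$, the target $s = bt + \sqrt{2Vt}$ together with $\lambda = \dfrac{\sqrt{2t/V}}{1 + b\sqrt{2t/V}} \in (0,1/b)$, and to verify by direct substitution that the exponent equals $-t$. Writing $u = \sqrt{2t/V}$, one gets $1-\lambda b = (1+bu)^{-1}$, hence $\tfrac{\lambda^2 V}{2(1-\lambda b)} = \tfrac{t}{1+bu}$ and $\lambda s = \tfrac{t(2+bu)}{1+bu}$, whose difference is precisely $t$. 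This yields $\P\big(S \geq bt+\sqrt{2Vt}\big) \leq e^{-t}$, and, by the symmetric bound above, also $\P\big(-S \geq bt+\sqrt{2Vt}\big) \leq e^{-t}$.

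Finally, I would set $t = \log(2/\delta)$, so that each of the two tail events has probability at most $\delta/2$; a union bound gives $|S| \leq \sqrt{2V\log(2/\delta)} + b\log(2/\delta)$ with probability $1-\delta$. Dividing through by $n$ and using $\dfrac{\sqrt{2V\log(2/\delta)}}{n} = \sqrt{\dfrac{2(\tfrac1n\sum_i \sigma_i^2)\log(2/\delta)}{n}}$ produces the stated inequality. The step I expect to be the main (really the only) obstacle is the optimization over $\lambda$: the naive choice $\lambda = s/(V+bs)$ would give a factor $2b$ rather than $b$ in the linear term, so one must use the exact minimizer above, equivalently the fact that the Legendre transform of $\lambda\mapsto \tfrac{\lambda^2 V}{2(1-\lambda b)}$ is $\tfrac{V}{b^2}h(bs/V)$ with $h(u) = 1+u-\sqrt{1+2u}$ and $h^{-1}(y) = y+\sqrt{2y}$; everything else is routine.
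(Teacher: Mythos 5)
Your proof is correct and follows essentially the same Cram\'er--Chernoff-plus-union-bound route as the paper. The only difference is the choice of $\lambda$: the paper takes $\lambda = n\upsilon/(\sum_i\sigma_i^2 + bn\upsilon)$ and inverts the resulting tail bound $\exp\big(-n^2\upsilon^2/(2(\sum_i\sigma_i^2+bn\upsilon))\big)$, which strictly speaking yields $2b\log(2/\delta)/n$ in the linear term, so your exact optimization of $\lambda$ is in fact the cleaner way to obtain the stated constant $b$.
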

\begin{proof}
If the assumptions of Lemma~\ref{lem:bernstein} are verified, then
\begin{eqnarray*}
 \P \Big( \sum_{i=1}^n X_i - \sum_{i=1}^n \mu_i \geq n \upsilon \Big) &=  \P \Bigg[ \exp\Big(\lambda(\sum_{i=1}^n X_i - \sum_{i=1}^n \mu_i)\Big) \geq \exp(n\lambda \upsilon) \Bigg]\\
&\leq \E\Bigg[\frac{\exp\Big(\lambda(\sum_{i=1}^n X_i - \sum_{i=1}^n \mu_i)\Big)}{\exp(n\lambda \upsilon)}\Bigg]\\
&\leq \prod_{i=1}^n \E\Bigg[\frac{\exp\Big(\lambda(X_i - \mu_i)\Big)}{\exp(\lambda \upsilon)}\Bigg]\\
&\leq \exp(\frac{\lambda^2}{2}\sum_{i=1}^n \frac{\si_i^2}{2(1-\lambda b)} - n\lambda\upsilon).
\end{eqnarray*}

By setting $\lambda = \frac{n\upsilon}{\sum_{i=1}^n\sigma_i^2+bn\upsilon}$ we obtain
\begin{equation*}
 \P \Big(  \sum_{i=1}^n X_i -  \sum_{i=1}^n\mu_i \geq  n\upsilon \Big) \leq \exp(-\frac{n^2\upsilon^2}{2(\sum_{i=1}^n\si_i^2+bn\upsilon)}).
\end{equation*}

By an union bound we obtain
\begin{equation*}
 \P \Big(  |\sum_{i=1}^nX_i - \sum_{i=1}^n \mu_i| \geq  n\upsilon \Big) \leq 2\exp(-\frac{n^2\upsilon^2}{2(\sum_{i=1}^n\si_i^2+bn\upsilon)}).
\end{equation*}

This means that with probability $1-\delta$,
\begin{equation*}
|\frac{1}{n}\sum_{i=1}^nX_i - \frac{1}{n}\sum_{i=1}^n \mu_i| \leq \sqrt{\frac{2(\frac{1}{n}\sum_{i=1}^n \si_i^2)\log(2/\delta)}{n}} + \frac{b\log(2/\delta)}{n}.
\end{equation*}
\end{proof}

We also state the following Lemma on large deviations for the variance of independent random variables.

\begin{lemma}\label{ss:variance}
 Let $(X_1,\ldots,X_n)$ be $n$ independent random variables of mean $(\mu_1,\ldots,\mu_n)$ and of variance $(\si_1^2,\ldots,\si_n^2)$. Assume that there exists $b>0$ such that for any $\lambda < \frac{1}{b}$, for any $i \leq n$, it holds that $\E\Big[ \exp(\lambda (X_i-\mu_i)) \Big] \leq \exp\Big( \frac{\lambda^2 \sigma_i^2}{2(1 - \lambda b)}\Big)$ and also $\E\Big[ \exp(\lambda (X_i-\mu_i)^2 - \lambda \si_i^2) \Big] \leq \exp\Big( \frac{\lambda^2 \si_i^2}{2(1 - \lambda b)}\Big)$.

Let $V=\frac{1}{n}\sum_i (\mu_i - \frac{1}{n}\sum_i \mu_i)^2 + \frac{1}{n} \sum_n \si_i^2$ be the variance of a sample chosen uniformly at random among the $n$ distributions, and $\hat V = \frac{1}{n} \sum_{i=1}^n \big(X_i - \frac{1}{n}\sum_{j=1}^n X_j \big)^2$ the corresponding empirical variance. Then with probability $1-\delta$,
\begin{equation*}
|\sqrt{\hat{V}} - \sqrt{V}| \leq  2\sqrt{\frac{(1 + 3b + 4V)\log(2/\delta)}{n}}.
\end{equation*}
\end{lemma}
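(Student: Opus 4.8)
The plan is to first control the deviation of the empirical variance $\hat V$ from $V$ additively, and then transfer this to a bound on $|\sqrt{\hat V}-\sqrt V|$. Write $Y_i=X_i-\mu_i$, so $\E Y_i=0$ and $\V Y_i=\sigma_i^2$, and let $\bar\mu=\frac1n\sum_i\mu_i$. Using the elementary identity $\frac1n\sum_i(X_i-c)^2=\frac1n\sum_i(X_i-\bar X)^2+(\bar X-c)^2$ with $c=\bar\mu$, one gets
\[
\hat V \;=\; \frac1n\sum_{i=1}^n (X_i-\bar\mu)^2 \;-\;(\bar X-\bar\mu)^2 ,
\]
hence $\hat V-V=\big(\frac1n\sum_i(X_i-\bar\mu)^2-V\big)-(\bar X-\bar\mu)^2$. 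The first parenthesis is an average of the \emph{independent} variables $Z_i=(X_i-\bar\mu)^2$, whose averaged mean is exactly $V$ by the law of total variance; the second term equals $(\frac1n\sum_i Y_i)^2$, which will only be a lower-order correction.

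The core step is a Bernstein-type tail bound for $\frac1n\sum_i(Z_i-\E Z_i)$. Expanding $Z_i-\E Z_i=(Y_i^2-\sigma_i^2)+2(\mu_i-\bar\mu)Y_i$, I would bound the exponential moment of this sum by Cauchy--Schwarz, separating $\exp(\lambda(Y_i^2-\sigma_i^2))$ from $\exp(2\lambda(\mu_i-\bar\mu)Y_i)$, and then apply the two halves of the hypothesis — the sub-Gaussian control of $\E[\exp(\cdot\,Y_i)]$ and the Bernstein-type control of $\E[\exp(\cdot\,(Y_i^2-\sigma_i^2))]$ — to each factor. This produces a bound $\log\E[\exp(\lambda(Z_i-\E Z_i))]\le \lambda^2 v_i/(2(1-\lambda c))$ with proxy variance $v_i$ of order $\sigma_i^2(1+(\mu_i-\bar\mu)^2)$ and scale $c$ of order $b$. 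Multiplying over $i$, optimizing $\lambda$ as in the proof of Lemma~\ref{lem:bernstein}, and using $\frac1n\sum_i\sigma_i^2\le V$ together with $\frac1n\sum_i(\mu_i-\bar\mu)^2\le V$, I obtain with probability $1-\delta/2$
\[
\Big|\tfrac1n\sum_i(X_i-\bar\mu)^2-V\Big|\;\le\;\sqrt{\tfrac{2(\frac1n\sum_i\sigma_i^2)\log(2/\delta)}{n}}+\tfrac{\beta\log(2/\delta)}{n},
\]
for a numerical $\beta$ of order $1+b+V$. Applying Lemma~\ref{lem:bernstein} to $\frac1n\sum_i Y_i$ and squaring adds only a further $O(\log(2/\delta)/n)$ term, and a union bound costs the remaining $\delta/2$; collecting these and bounding $\frac1n\sum_i\sigma_i^2\le V$, one ends up with $|\hat V-V|\le\sqrt{2V\log(2/\delta)/n}+\beta'\log(2/\delta)/n$ on an event of probability $1-\delta$.

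It then remains to pass to the square roots through $|\sqrt{\hat V}-\sqrt V|=|\hat V-V|/(\sqrt{\hat V}+\sqrt V)$. When $V\ge \beta'\log(2/\delta)/n$ I would lower-bound the denominator by $\sqrt V$, which turns the previous estimate into $\sqrt{\log(2/\delta)/(2n)}+\tfrac12\sqrt{\beta'\log(2/\delta)/n}$; after absorbing constants this is at most $2\sqrt{(1+3b+4V)\log(2/\delta)/n}$. When $V< \beta'\log(2/\delta)/n$, the additive bound forces $\hat V$ to be of order $\beta'\log(2/\delta)/n$ as well, so $\sqrt{\hat V}$ and $\sqrt V$ are each already below the target, hence so is their difference.

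I expect the main obstacle to be the Bernstein-type exponential-moment bound for $Z_i-\E Z_i$ and, above all, carrying the constants carefully enough through the Cauchy--Schwarz split and the admissible ranges of $\lambda$ so that the proxy variance and scale recombine into exactly the advertised factor $1+3b+4V$. The passage to square roots is routine once one isolates the small-$V$ regime. Note also that in the intended application the $X_i$ are i.i.d., so $\mu_i\equiv\bar\mu$, the linear cross-term vanishes identically, and $\frac1n\sum_i\sigma_i^2=V$ exactly, which streamlines the bookkeeping considerably.
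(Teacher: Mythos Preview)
Your approach is essentially the paper's: the same bias--variance decomposition $\hat V=\frac1n\sum_i(X_i-\bar\mu)^2-(\bar X-\bar\mu)^2$, the same Bernstein-type control of $\frac1n\sum_i(Y_i^2-\sigma_i^2)$ via the second moment-generating-function hypothesis, and the same use of Lemma~\ref{lem:bernstein} for the squared-mean correction. Two minor differences are worth flagging. First, the paper in fact \emph{drops} the cross term $\frac{2}{n}\sum_i(\mu_i-\bar\mu)Y_i$ entirely (writing it, erroneously, as a product of two averages, the second of which vanishes); your Cauchy--Schwarz treatment of that term is the correct argument for the general statement, though as you observe the cross term is identically zero in the i.i.d.\ application. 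Second, for the passage to square roots the paper completes the square, rewriting $V-\sqrt{2V\log(2/\delta)/n}+\tfrac{\log(2/\delta)}{2n}=(\sqrt V-\sqrt{\log(2/\delta)/(2n)})^2$ and bounding $\sqrt{\hat V+\,\cdot\,}\le\sqrt{\hat V}+\sqrt{\,\cdot\,}$; this avoids your case split on the size of $V$ and delivers the constant $1+3b+4V$ directly. Your route via $|\hat V-V|/(\sqrt{\hat V}+\sqrt V)$ is equally valid but, as you anticipate, makes the bookkeeping for the exact constant slightly heavier.
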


\begin{proof}
By decomposing the estimate of the empirical variance in bias and variance, we obtain with probability $1-\delta$
\begin{align*}
\hat{V} =& \frac{1}{n}\sum_i (X_i - \frac{1}{n}\sum_j \mu_j)^2 - (\frac{1}{n}\sum_i X_i -\frac{1}{n} \sum_i \mu_i)^2\\
=& \frac{1}{n}\sum_i (X_i -  \mu_i)^2 + 2 \frac{1}{n}\sum_i (X_i -  \mu_i)\frac{1}{n}\sum_i (\mu_i - \frac{1}{n}\sum_j \mu_j)\\
&+  \frac{1}{n}\sum_i (\mu_i - \frac{1}{n}\sum_j \mu_j)^2  - (\frac{1}{n}\sum_i X_i -\frac{1}{n} \sum_i \mu_i)^2\\
=&\frac{1}{n}\sum_i (X_i -  \mu_i)^2 +  \frac{1}{n}\sum_i (\mu_i - \frac{1}{n}\sum_j \mu_j)^2  - (\frac{1}{n}\sum_i X_i -\frac{1}{n} \sum_i \mu_i)^2.
\end{align*}

We then have by the definition of $V$ that with probability $1-\delta$
\begin{equation}\label{eq:varia}
 \hat{V} - V = \frac{1}{n}\sum_{i=1}^n (X_i -  \mu_i)^2 -\frac{1}{n} \sum_{i=1}^n \si_i^2  - (\frac{1}{n}\sum_i X_i -\frac{1}{n} \sum_i \mu_i)^2.
\end{equation}

If the assumptions of Lemma~\ref{ss:variance} are verified, we have with probability $1-\delta$
\begin{align*}
 \P \Big( \sum_{i=1}^n (X_i - \mu_i)^2 - \sum_{i=1}^n\si_i^2 \geq n \upsilon \Big) &=  \P \Bigg[ \exp\Big(\lambda(\sum_{i=1}^n |X_i - \mu_i|^2 - \sum_{i=1}^n \si_i^2)\Big) \geq \exp(n\lambda \upsilon) \Bigg]\\
&\leq \E\Bigg[\frac{\exp\Big(\lambda(\sum_{i=1}^n |X_i - \mu_i|^2 - \sum_{i=1}^n\si_i^2)\Big)}{\exp(n\lambda \upsilon)}\Bigg]\\
&\leq \prod_{i=1}^n \E\Bigg[\frac{\exp\Big(\lambda (|X_i - \mu_i|^2 - \si_i^2)\Big)}{\exp(\lambda \upsilon)}\Bigg]\\
&\leq 2\exp(\frac{\lambda^2}{2}\sum_{i=1}^n \frac{\si_i^2}{2(1-\lambda b)} - n\lambda\upsilon).
\end{align*}

If we take $\lambda = \frac{n\upsilon}{\sum_{i=1}^n\sigma_i^2+nb\upsilon}$ we obtain with probability $1-\delta$
\begin{equation}\label{proba:sibgaussvar}
 \P \Big(  \sum_{i=1}^n (X_i -  \mu_i)^2 - \sum_{i=1}^n \si_i^2 \geq  n\upsilon^2 \Big) \leq \exp(-\frac{n^2\upsilon^2}{2(\sum_{i=1}^n\si_i^2+bn\upsilon)}).
\end{equation}

By a union bound we get with probability $1-\delta$ that
\begin{equation*}
 \P \Big(  |\sum_{i=1}^n(X_i - \mu_i)^2 - \sum_{i=1}^n \si_i^2| \geq  n\upsilon \Big) \leq 2\exp(-\frac{n^2\upsilon^2}{2(\sum_{i=1}^n\si_i^2+bn\upsilon)}).
\end{equation*}

This means that with probability $1-\delta$,
\begin{equation}\label{eq:gdev2}
 |\frac{1}{n}\sum_{i=1}^n(X_i - \mu_i)^2 - \frac{1}{n} \sum_{i=1}^n \si_i^2| \leq \sqrt{\frac{2(\frac{1}{n}\sum_{i=1}^n \si_i^2)\log(2/\delta)}{n}} + \frac{b\log(2/\delta)}{n}.
\end{equation}

Finally, by combining Equations \ref{eq:varia} and \ref{eq:gdev2} with Lemma~\ref{lem:bernstein}, we obtain with probability $1-\delta$
\begin{align*}
 |\hat{V} - V| &\leq \frac{4(\frac{1}{n}\sum_{i=1}^n \si_i^2)\log(2/\delta)}{n} + \frac{2b^2\log(2/\delta)^2}{n^2} +\sqrt{\frac{2(\frac{1}{n}\sum_{i=1}^n \si_i^2)\log(2/\delta)}{n}} + \frac{b\log(2/\delta)}{n}\\
&\leq \sqrt{\frac{2(\frac{1}{n}\sum_{i=1}^n \si_i^2)\log(2/\delta)}{n}} + \frac{(3b + 4\frac{1}{n}\sum_{i=1}^n \si_i^2)\log(2/\delta)}{n}\\
&\leq \sqrt{\frac{2V\log(2/\delta)}{n}} + \frac{(3b + 4V)\log(2/\delta)}{n},
\end{align*}
when $n \geq b\log(2/\delta)$ and because $V \geq \frac{1}{n}\sum_{i=1}^n \si_i^2$.

This implies with probability $1-\delta$ that
\begin{align*}
 &V -  \sqrt{\frac{2V\log(2/\delta)}{n}} + \frac{\log(2/\delta)}{2n}  \leq \hat{V} +  \frac{(3b + 4V)\log(2/\delta)}{n} + \frac{\log(2/\delta)}{2n}\\
&\Leftrightarrow \sqrt{V} - \sqrt{\frac{\log(2/\delta)}{2n}}  \leq \sqrt{\hat{V} +  \frac{(1 + 3b + 4V)\log(2/\delta)}{n}} \\
&\Rightarrow \sqrt{V} - \sqrt{\frac{\log(2/\delta)}{2n}}  \leq \sqrt{\hat{V}} +  \sqrt{\frac{(1 + 3b + 4V)\log(2/\delta)}{n}}\\
&\Rightarrow \sqrt{V}  \leq \sqrt{\hat{V}} +  2\sqrt{\frac{(1 + 3b + 4V)\log(2/\delta)}{n}}.
\end{align*}

On the other hand, we have also with probability $1-\delta$
\begin{align*}
 &\hat{V} \leq V+ \sqrt{\frac{2V\log(2/\delta)}{n}} + \frac{(3b + 4V)\log(2/\delta)}{n}\\
&\Rightarrow \sqrt{\hat{V}} \leq \sqrt{V} +  2\sqrt{\frac{(1 + 3b + 4V)\log(2/\delta)}{n}}.
\end{align*}

Finally, we have with probability $1-\delta$
\begin{equation}
|\sqrt{\hat{V}} - \sqrt{V}| \leq  2\sqrt{\frac{(1 + 3b + 4V)\log(2/\delta)}{n}}. 
\end{equation}
\end{proof}

\end{document}